\newcommand{\ba}{\begin{align}}
\newcommand{\ea}{\end{align}}
\newcommand{\be}{\begin{equation}}
\newcommand{\ee}{\end{equation}}
\newcommand{\beq}{\begin{eqnarray}}
\newcommand{\eeq}{\end{eqnarray}}
\newcommand{\beqs}{\begin{eqnarray*}}
\newcommand{\eeqs}{\end{eqnarray*}}
\newcommand\SA{\mathscr{S}}
\newcommand\F{\mathfrak{F}}
\newcommand\D{\mathfrak{D}}
\newcommand\C{\mathscr{C}}
\newcommand\Z{\mathbb{Z}}
\newcommand\A{\mathfrak{A}}
\begin{document}

\theoremstyle{plain}
\newtheorem{theorem}{Theorem~}[section]
\newtheorem{main}{Main Theorem~}
\newtheorem{lemma}[theorem]{Lemma~}
\newtheorem{proposition}[theorem]{Proposition~}
\newtheorem{corollary}[theorem]{Corollary~}
\newtheorem{definition}[theorem]{Definition~}
\newtheorem{notation}[theorem]{Notation~}
\newtheorem{example}[theorem]{Example~}
\newtheorem{remark}[theorem]{Remark~}
\newtheorem*{question}{Question}
\newtheorem*{claim}{Claim}
\newtheorem*{ac}{Acknowledgement}
\newtheorem*{conjecture}{Conjecture~}
\renewcommand{\proofname}{\bf Proof}

%%%%%%%%%%%%%%
% Tikz variables
%%%%%%%%%%%%%%

%%%%%%%%%%%%%%
% 3D Coodinate
%%%%%%%%%%%%%%

\newcommand{\rotateRPY}[3]% roll, pitch, yaw
{   \pgfmathsetmacro{\rollangle}{#1}
    \pgfmathsetmacro{\pitchangle}{#2}
    \pgfmathsetmacro{\yawangle}{#3}

    % to what vector is the x unit vector transformed, and which 2D vector is this?
    \pgfmathsetmacro{\newxx}{cos(\yawangle)*cos(\pitchangle)}
    \pgfmathsetmacro{\newxy}{sin(\yawangle)*cos(\pitchangle)}
    \pgfmathsetmacro{\newxz}{-sin(\pitchangle)}
    \path (\newxx,\newxy,\newxz);
    \pgfgetlastxy{\nxx}{\nxy};

    % to what vector is the y unit vector transformed, and which 2D vector is this?
    \pgfmathsetmacro{\newyx}{cos(\yawangle)*sin(\pitchangle)*sin(\rollangle)-sin(\yawangle)*cos(\rollangle)}
    \pgfmathsetmacro{\newyy}{sin(\yawangle)*sin(\pitchangle)*sin(\rollangle)+ cos(\yawangle)*cos(\rollangle)}
    \pgfmathsetmacro{\newyz}{cos(\pitchangle)*sin(\rollangle)}
    \path (\newyx,\newyy,\newyz);
    \pgfgetlastxy{\nyx}{\nyy};

    % to what vector is the z unit vector transformed, and which 2D vector is this?
    \pgfmathsetmacro{\newzx}{cos(\yawangle)*sin(\pitchangle)*cos(\rollangle)+ sin(\yawangle)*sin(\rollangle)}
    \pgfmathsetmacro{\newzy}{sin(\yawangle)*sin(\pitchangle)*cos(\rollangle)-cos(\yawangle)*sin(\rollangle)}
    \pgfmathsetmacro{\newzz}{cos(\pitchangle)*cos(\rollangle)}
    \path (\newzx,\newzy,\newzz);
    \pgfgetlastxy{\nzx}{\nzy};
}

%myGlobalTransformation
\newcommand{\myGT}[2]
{
    \pgftransformcm{2}{0}{0}{3}{\pgfpoint{#1cm}{#2cm}}
}

   % \pgftransformcm{1}{0}{0.4}{0.5}{\pgfpoint{#1cm}{#2cm}}

%%%%%%%%%%%%%%
% Braided diagram -- over strand
%%%%%%%%%%%%%%

\tikzstyle WL=[line width=3pt,opacity=1.0]

% draws lines with white background to show which lines are closer to the
% viewer (Hint: draw from bottom up and from back to front)
%Input: start and end point
\newcommand{\drawWL}[3]
{
    \draw[white,WL]  (#2) -- (#3);
    \draw[#1] (#2) -- (#3);
}

%        \foreach \x in {1,3,5,7} {
%            \foreach \y in {1,3,5,7} {
%                \node at (\x,\y) [circle,fill=black] {};

%%%%%%%%%%%%%%%%
% Fourier Transform Coefficient
%%%%%%%%%%%%%%%%

\newpage
\title{
Jones-Wassermann subfactors for modular tensor categories
}
\author{Zhengwei Liu}
\address{Department of Mathematics and Department of Physics\\Harvard University}
\email{zhengweiliu@fas.harvard.edu}
\author{Feng Xu}
\address{Department of Mathematics\\
 University of California, Riverside}
\email{xufeng@math.ucr.edu}

%\date{\today}

\begin{abstract}

The representation theory of a conformal net is a unitary modular tensor category. It is captured by the bimodule category of the Jones-Wassermann subfactor.
In this paper, we construct multi-interval Jones-Wassermann subfactors for unitary modular tensor categories.
%The two-interval case defines the quantum double of the unitary MTC. In general, we call it the quantum multiparty of UMTC.
We prove that these subfactors are self-dual. It generalizes and categorifies the self-duality of finite abelian groups and we call it modular self-duality.
\end{abstract}

\maketitle

\section{Introduction}

Subfactor theory provides an entry point into a world of mathematics
and physics containing large parts of conformal field theory,
quantum algebras and low dimensional topology (cf. \cite{JonICM}) and
references therein).  In \cite{Jon14} V. Jones has devised a
renormalization program based on planar algebras as an attempt to
show that all finite depth subfactors are related to CFT, i.e.,  the
double of a finite depth subfactor should be related to CFT. 

More generally, the program is the following: given a unitary modular
tensor category (MTC) $\C$, (cf. \cite{Turaev}), can we construct a CFT
whose representation category is isomorphic to $\C$? We shall call
such a program ``reconstruction program", analogue to a similar
program in higher dimensions by Doplicher-Roberts (cf. \cite{DopRob89}).

Given a rational conformal net $\A$, and let $I$ be a union of $n>1
$ disconnected intervals. The Jones-Wassermann subfactor is the
subfactor $\A(I)\subset \A(I')'$ \cite{Lon95,Was98,Xu00,KawLonMug01}. This subfactor is related to
permutation orbifold and a simple application of orbifold theory
shows that the Jones-Wassermann subfactor is self-dual, see Remark \ref{Rem:orbifold} and \cite{KacLonXu05}. 

If the reconstruction program works, then for any MTC $\C$ we can find a
rational conformal net $\A$ such that the category of
representations of $\A$ is isomorphic to $\C$, it will follow that
there are self-dual Jones-Wassermann subfactors for each integer
$m>1$. Hence a positive solution to reconstruction program would
imply that we can construct self-dual Jones-Wassermann subfactors
for each integer $m>1$ associated with any unitary MTC $\C$.  This
is the motivation for our paper. 

Our main result gives a construction of  self-dual Jones-Wassermann subfactors for each
integer $m>1$ associated with any unitary MTC $\C$. The main difficulty is the proof of the self-duality for Jones-Wassermann subfactors for MTC.
The proof of the self-duality essentially requires the modularity of $\C$, so we call it the \emph{modular self-duality}.
We believe that our construction will shed new light on the reconstruction program.

We construct the ``$m$-interval'' Jones-Wassermann subfactor associated with a unitary MTC $\C$ by a Frobenius algebra $\gamma_m$ in $\C^m$, the $m^{\rm th}$ tensor power of $\C$, although there is no notion of intervals in modular tensor categories. We give an explicit formula for the objects and morphisms of these Frobenius algebras. When $m=2$, the Jones-Wassermann subfactor defines the quantum double of $\C$ \cite{Dri86,Ocn91,Pop94,Lon95,Mug03II}.

The bimodule category of a subfactor is described by a subfactor planar algebra \cite{JonPA}.
The $n$-box space of the planar algebra of the $m$-interval Jones-Wassermann subfactor for $\C$ is given by the vector space $\hom_{\C^m}(1,\gamma_m^n)$.
It turns out to be natural to represent these vectors by a 3D picture.
This representation identifies $\hom_{\C^m}(1,\gamma_m^n)$ as a configuration space $Conf_{n,m}$ on a 2D $n\times m$ lattice.
Therefore the configuration space $\{Conf_{n,m}\}_{m,n\in \mathbb{N}}$ unifies the Jones-Wassermann subfactors for all $m\geq 1$. It is a natural candidate for the configuration space of a 2D lattice model that can be used in the reconstruction program.

Moreover, we show that planar tangles can act on $\{Conf_{n,m}\}_{m,n\in \mathbb{N}}$ in two different directions independently. 
In one direction $m$ is fixed. These actions are the usual ones in the planar algebra of the $m$-interval Jones-Wassermann subfactor. In the other direction $n$ is fixed. These actions relate the Jones-Wassermann subfactor with different intervals which have not been studied before.

The bi-directional actions of planar tangles are compatible with the geometric actions on the 2D lattices. We call such family of vector spaces a \emph{bi-planar algebra}. It is a new subject in subfactor theory and it adds one additional dimension to the theory of planar algebras.

This 3D representation also leads to the discovery of a new symmetry between $m$ and $n$, although the meaning of the actions of planar tangles in the two directions are completely different.  
It will be interesting to understand these additional symmetries in conformal field theory.

When $\C$ is the representation category of a finite abelian group $G$, the configuration space $Conf(\C)_{2,2}$ becomes $L^2(G)$. Moreover, the modular self-duality coincides with the self-duality of $G$. The proof of the self-duality of $G$ uses the discrete Fourier transform on $G$. 
We construct the string Fourier transform (SFT) on the configuration space $\hom_{\C^m}(1,\gamma_m^n)$ to prove the modular self-duality. From this point of view, the modular self-duality and the SFT generalize and categorify the self-duality and the Fourier transform of finite abelian groups.

Moreover, the SFT on $Conf(\C)_{2,2}$ is the same as the modular $S$-matrix of $\C$.
Therefore we can study the Fourier analysis of the $S$-matrix on $Conf(\C)_{2,2}$. It fits into the recent progress about the Fourier analysis on subfactors \cite{Liuex,JLW16,LW-survey,JLW16b}.

The modular self-duality has been used in the quon 3D language for quantum information \cite{JLW-Quon}, where the vector space $\hom_{\C^2}(1,\gamma_2^2)$ is considered as the 1-quon space. A combination of these two works leads to further applications in the study of MTC.

%For general $n,m$, the modular self-duality coincides with the self-duality of the  $Z_n\times \Z_m$ lattice in a torus. This lead to a generalization of the Kramers-Wannier duality \cite{LX}.

\begin{ac}
Zhengwei Liu was supported by a grant from Templeton Religion Trust and an AMS-Simons Travel Grant.
Feng Xu was supported in part by an academic senate grant from UCR.
The authors would like to thank Vaughan F. R. Jones for stimulating discussions abour his renormalization program.
\end{ac}

\section{Configuration spaces}
\subsection{Modular tensor categories}
We refer the readers to \cite{Turaev} for basic definitions about modular tensor categories.
Suppose $\C$ is a unitary modular tensor category.
Let $Irr$ be the set of simple objects of $\C$ and the unit is denoted by 1.
For an object $X$, its dual object is denoted by $\overline{X}$. Its quantum dimension is $d(X)$.
Let $\displaystyle \mu=\sum_{X\in Irr} d(X)^2$ be the global dimension of $\C$.

The modular conjugation $\theta_{\C}$ on $\C$ is a horizontal reflection. We have that $\theta_{\C}(X)=\overline{X}$. Moreover, for objects $X$, $Y$, $Z$ in $\C$, $\theta_{\C}:\hom(X\otimes Y,Z) \to \hom(\overline{Y}\otimes \overline{X}, \overline{Z})$ is an anti-linear algebroid isomorphism.
The adjoint operator $*$ on $\C$ is a vertical reflection. We have that $X^*=X$. Moreover, $*:\hom(X\otimes Y, Z)\to \hom(Z, X\otimes Y)$ is an anti-linear algebroid anti-isomorphism.
The contragredient map $\rho_{\pi}$ on $\C$ is a rotation by $\pi$. We have that  $\rho(X)=\overline{X}$. Moreover, $\rho: \hom(X\otimes Y, Z) \to \hom(\overline{Z}, \overline{Y}\otimes \overline{X})$ is a linear algebroid anti-isomorphism.
Furthermore
\begin{align}\label{Equ:theta=rho*}
\theta_{\C}=\rho_{\pi}\circ *.
\end{align}

We can identify the morphism spaces $\hom(\overline{Z},X\otimes Y)$ and $ \hom(1,X\otimes Y\otimes Z)$ as follows:
For a morphism $\alpha\in \hom(1,X\otimes Y\otimes Z)$, we obtain a morphism $\tilde{\alpha}=(1_X\otimes 1_Y \otimes \phi_{Z\otimes \overline{Z}}) (\alpha\otimes 1_{\overline{Z}} )$ in $\hom(\overline{Z}, X\otimes Y)$, where $\phi_{Z\otimes \overline{Z}} \in \hom(1, Z\otimes \overline{Z})$ is the duality map.
\begin{notation}[Frobenius reciprocity]
Diagrammatically we represent $\tilde{\alpha}$ as

\begin{align*}
\raisebox{-.5cm}{
\begin{tikzpicture}
\node at (.2,.5) {$\tilde{\alpha}$};
  \draw (0.5,.5)--(0,0)--++(0,-.5);
  \draw (0.5,.5)--(0.5,0)--++(0,-.5);
  \draw (0.5,.5)--(1,1);
\end{tikzpicture}}
&:=
\raisebox{-.5cm}{
\begin{tikzpicture}
\node at (.2,.5) {$\alpha$};
  \draw (0.5,.5)--(0,0)--++(0,-.5);
  \draw (0.5,.5)--(0.5,0)--++(0,-.5);
  \draw (0.5,.5)--(1,0);
\draw (1,0) arc (-180:0:.25) --++(0,1);
\end{tikzpicture}}
\end{align*}
\end{notation}

\begin{notation}
For an object $X$ in $\C$, we denote an ortho-normal-basis of $\hom_{\C}(1,X)$ by $ONB_{\C}(X)$, or $ONB(X)$ for short.
We denote an ortho-normal-basis of $\hom_{\C}(X,1)$ by $ONB^*_{\C}(X)$, or $ONB^*(X)$ for short.
\end{notation}

For two objects  $X$ and $Y$, we have the resolution of the identity:
\be \label{Equ:resolution of identity}
1_X\otimes 1_Y=
\raisebox{-.5cm}{
\begin{tikzpicture}
  \draw (0,0)--(0,1);
  \draw (0.5,0)--(0.5,1);
\end{tikzpicture}
}
=\sum_{Z\in Irr,\alpha\in ONB(X\otimes Y \otimes Z)} d(Z)
\raisebox{-.5cm}{
\begin{tikzpicture}
\node at (-.1,.75) {$\alpha^*$};
\node at (-.1,.25) {$\alpha$};
  \draw (0,0)--(.25,.25)--(.25,.75)--(0,1);
  \draw (0.5,0)--(.25,.25)--(.25,.75)--(0.5,1);
\end{tikzpicture}
}
\ee

\subsection{Configuration spaces}
Now let us define the configuration space on a finite 2D-lattice with the target space $\C$.
Each configuration has three parts: $Z$-, $X$-, $Y$- configurations.

We use $Grid(n,m)$ to represent the grid $\Z_n\times \Z_m \times \{\pm1\}$.
We allocate the vertices of the grid at $(i,j,\pm 1)$, $0\leq i\leq m-1$ and $1\leq j\leq n$ in the 3D space which are indicated by the bullets in Fig.~\ref{Fig:grid}.
To simplify the notations, we draw pictures for $n=4$, $m=3$. The reader can figure out the general case.

%We use bullets to indicate the points on the 3D-grid $\Z \times \Z \times \{\pm1\}$.

\begin{figure}[h]
\begin{tikzpicture}
\myGT{3}{1};
\draw[->] (-1,0,0)--(4,0,0) node[above] {$X$};
\draw[->] (0,-.4,0)--(0,.4,0) node[above] {$Z$};
\draw[->] (0,0,1)--(0,0,-3) node[above] {$Y$};
\foreach \x in {1,2,3,4}{
\foreach \z in {0,1,2}{
\foreach \y in {-.15,.15}{
\node[blue] at (\x,\y,-\z) {$\bullet$};
}}}
\end{tikzpicture}
\caption{Grid(n,m) for $n=4$, $m=3$.}  \label{Fig:grid}
\end{figure}
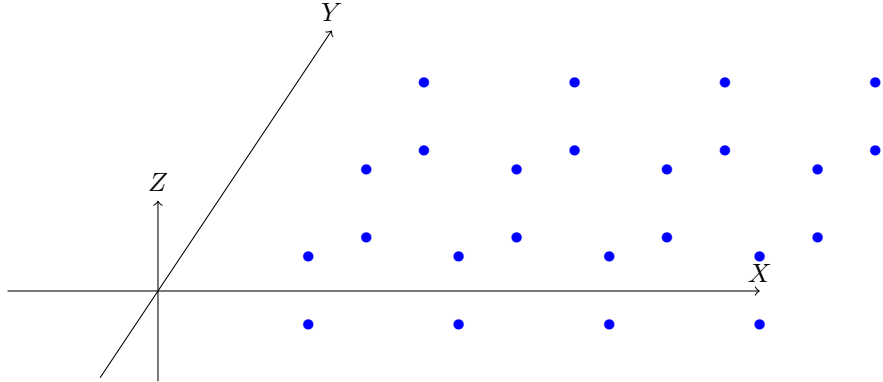

For the lattice $Lat=\Z_n\times \Z_m$,
a $Z$-configuration is a map from the sites of the lattice to simple objects in $\C$. We denote the simple object at the site $(i,j)$ as $X_{i,j}$.
We denote this $Z$-configuration by $X_{\vec{i},\vec{j}}$ and represent it in the 3D space by assigning the object $X_{i,j}$ to the line from $(i,j,1)$ to $(i,j,-1)$ as in Fig.~\ref{Fig:Z-configuration}:
\begin{figure}[h]\raisebox{0cm}{
\begin{tikzpicture}
\myGT{3}{1};
            \foreach \x in {1,2,3,4} {
               \foreach \z in {0,1,2} {
%                    \drawWL{red}{\x,0,\z}{\x,-0.3,\z};
                    \drawWL{blue}{\x,0,-\z}{\x,-0.3,-\z};
                 }
                }
   \foreach \z in {0,1,2} {
    \foreach \x in {1,2,3,4} {
%    \fill[red]  (\x,0,\z) circle (.2);
%    \node[red] at (\x,0,\z) {$\bullet$};
%    \node[red] at (\x,-0.3,\z) {$\bullet$};
    \node[blue] at (\x,0,-\z) {$\bullet$};
    \node[blue] at (\x,-0.3,-\z) {$\bullet$};
    \node at (\x-.2,-0.15,-\z) {$X_{\x,\z}$};
\draw[blue] (\x-.05,-0.15+.05,-\z)--(\x,-0.15,-\z)--(\x+.05,-0.15+.05,-\z);
    }}
\end{tikzpicture}
}.\caption{Z-Configuration.}  \label{Fig:Z-configuration}
\end{figure}
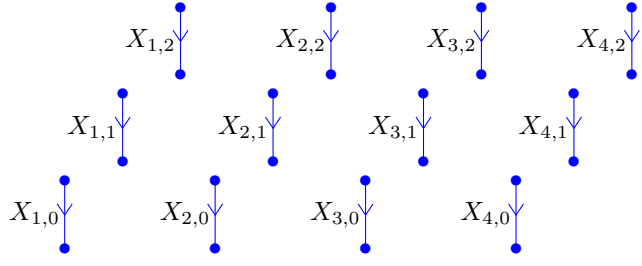

We denote $X_{\vec{i},j}=X_{1,j}\otimes \cdots \otimes X_{n,j}$ and $X_{i,\vec{j}}=X_{i,0}\otimes \cdots \otimes X_{i,m-1}$.
Moreover,
\begin{align*}
d(X_{\vec{i},\vec{j}})&:=\prod_{1\leq i \leq n,0\leq j\leq m-1} d(X_{i,j}), \\
d(X_{\vec{i},j})&:=\prod_{1\leq i \leq n} d(X_{i,j}), \\
d(X_{i,\vec{j}})&:=\prod_{0\leq j\leq m-1} d(X_{i,j}).
\end{align*}

An $X$-configuration with boundary $X_{\vec{i},j}$ is a morphism  $a_j$  in $\hom(1, X_{\vec{i},j})$. We denote the boundary by $X(a_j):=X_{\vec{i},j}$.
A $Y$-configuration with boundary $X_{i,\vec{j}}$ is a morphism $b_i$ in $\hom(X_{i,\vec{j}},1)$. We denote the boundary by $X(b_i):=X_{i,\vec{j}}$.
We represent them in the 3D space in Fig.~\ref{Fig:XY-configurations}.
Moreover, we call $a_{\vec{j}}=a_0\otimes \cdots \otimes a_m$ an $X$-configuration with boundary $X_{\vec{i},\vec{j}}$ and
 $b_{\vec{i}}=b_1\otimes\cdots \otimes b_n$ a $Y$-configuration with boundary $X_{\vec{i},\vec{j}}$.

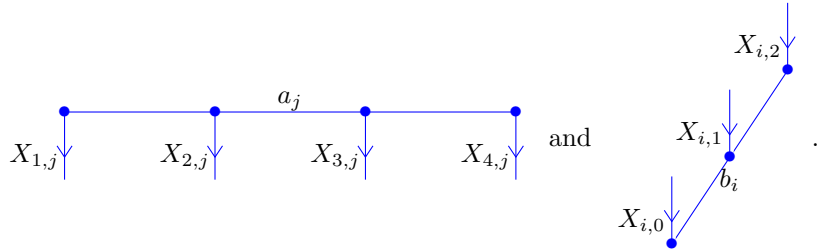
\begin{figure}[h]
\raisebox{-.5cm}{
\begin{tikzpicture}
\myGT{3}{1};
\node at (2.5,.05,0) {$a_j$};
\draw [blue] (1,0,0)--(4,0,0);
\foreach \x in {1,2,3,4} {
{
\drawWL{blue} {\x,0,0}{\x,-0.3,0};
\draw[blue] (\x-.05,-0.2+.05,0)--(\x,-0.2,0)--(\x+.05,-0.2+.05,0);
\node at (\x-.2,-0.2,0) {$X_{\x,j}$};
\node[blue] at (\x,0,0) {$\bullet$};
}}
\end{tikzpicture}
}
and
\raisebox{-1.5cm}{
\begin{tikzpicture}
\myGT{3}{1};
\draw [blue] (0,-0.6,0)--(0,-0.6,-2);
\node at (0,-.7,-1) {$b_i$};
\foreach \z in {0,1,2}
{
\drawWL{blue} {0,-.3,-\z}{0,-0.6,-\z};
\draw[blue] (-.05,-0.5+.05,-\z)--(0,-0.5,-\z)--(.05,-0.5+.05,-\z);
\node at (-.2,-0.5,-\z) {$X_{i,\z}$};
\node[blue] at (0,-0.6,-\z) {$\bullet$};
}
\end{tikzpicture}
}.
\caption{$X$- and $Y$-Configurations.}  \label{Fig:XY-configurations}
\end{figure}

We call $a_{\vec{j}}\otimes b_{\vec{i}}$ a configuration with boundary $X_{\vec{i},\vec{j}}$, denoted by $X(a_{\vec{j}}\otimes b_{\vec{i}}):=X_{\vec{i},\vec{j}}$.
We represent it in the 3D space as in Fig.~\ref{Fig:configuration}:
We define the configuration space on the $n\times m$ 2D-lattice $Lat$ to be the Hilbert space
\begin{align*}
Conf(Lat)=Conf(\C)_{m,n}:&=\bigoplus_{X_{\vec{i},\vec{j}}\in Irr^{nm}} \left(\bigotimes_{j=0}^{m-1} \hom(1,X_{\vec{i},j}) \otimes \bigotimes_{i=1}^{n}  \hom(X_{i,\vec{j}},1) \right),
%\\&=\bigoplus_{X_{\vec{i},\vec{j}}\in Irr^{nm}} \{\sum a_{\vec{j}}\otimes b_{\vec{i}}\}.
\end{align*}
where each hom space is considered as a Hilbert space.
We simply use the notation $\sum a_{\vec{j}}\otimes b_{\vec{i}}$ to represent an element in $Conf(Lat)$.

\begin{figure}[h]
\raisebox{0cm}{
\begin{tikzpicture}
\myGT{3}{1};
%\fill[green!20] (0,0,0)--(4,0,0)--(4,0,-4)--(0,0,-4)--(0,0,0);
\node at (-1,0,-1) {$\displaystyle \textcolor{black}{\mu^{\frac{(1-n)(m-1)}{4}}} \sqrt{d(X_{\vec{i},\vec{j}})}$};
            \foreach \x in {1,2,3,4} {
%            \draw[red](\x,-0.3,1)--(\x,-0.3,7);
            \draw[blue](\x,-0.3,0)--(\x,-0.3,-2);
            \node at (\x,-.5,-1) {$b_{\x}$};
               \foreach \z in {0,1,2} {
%                    \drawWL{red}{\x,0,\z}{\x,-0.3,\z};
%                    \drawWL{blue}{\x,0,-\z}{\x,-0.3,-\z};
                 }
                }
               \foreach \z in {0,1,2} {
%                \drawWL{red} {1,0,\z}{7,0,\z};
                \drawWL{blue} {1,0,-\z}{4,0,-\z};
                \node at (2.5,.05,-\z) {$a_{\z}$};
                 }
   \foreach \z in {0,1,2} {
    \foreach \x in {1,2,3,4} {
%    \fill[red]  (\x,0,\z) circle (.2);
%    \node[red] at (\x,0,\z) {$\bullet$};
%    \node[red] at (\x,-0.3,\z) {$\bullet$};
    \node[blue] at (\x,0,-\z) {$\bullet$};
    \node[blue] at (\x,-0.3,-\z) {$\bullet$};
                    \draw[blue] (\x,-.15,-\z)--(\x,-0.3,-\z);
                    \fill[green!20] (\x-.1,-.15,-\z+.1)--++(.2,0,0)--++(0,0,-.2)--++(-.2,0,0)--++(0,0,.2);
                    \draw[blue] (\x,0,-\z)--(\x,-0.15,-\z);
    \node at (\x-.2,-0.15,-\z) {$X_{\x,\z}$};
\draw[blue] (\x-.05,-0.15+.05,-\z)--(\x,-0.15,-\z)--(\x+.05,-0.15+.05,-\z);
    }}
\end{tikzpicture}
}.
\caption{Configurations for $n=4$, $m=3$: We use the small square at (i,j,0) to indicate the vertex (i,j) in the lattice $\Z_m\times \Z_n$.
}  \label{Fig:configuration}
\end{figure}
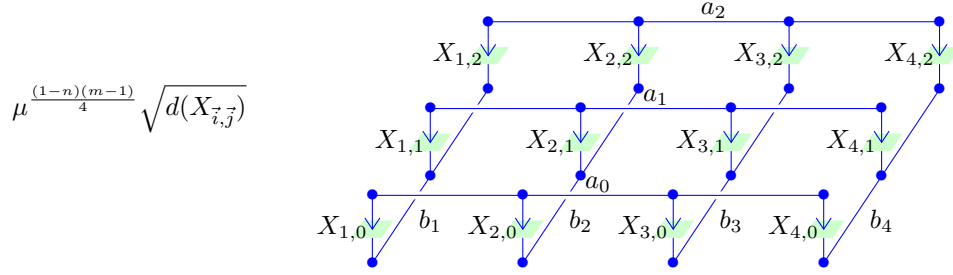

\subsection{Duality}\label{Sec:Duality}

When we consider $Lat=\Z_n \times \Z_m$ as a lattice on a torus, its dual lattice $Lat'$ is also $\Z_n\times \Z_m$ and the configuration space on the dual lattice is $Conf(\C)_{m,n}$.
We allocate the vertices of the corresponding $Grid(n,m)$ at $(i+1/2,j-1/2,\pm1)$, $0\leq i\leq m-1$ and $1\leq j\leq n$ in the 3D space.

We define a bilinear form $LL$ on the configuration spaces of the lattice and the dual lattice $Conf(Lat)\otimes Conf(Lat')=Conf(\C)_{m,n}\otimes Conf(\C)_{m,n}$.
For $a_{\vec{j}}\otimes b_{\vec{i}}$ with boundary $X_{\vec{i},\vec{j}}$ in $Conf(Lat)$, and $a'_{\vec{j}}\otimes b'_{\vec{i}}$ with boundary $X'_{\vec{i},\vec{j}}$ in $Conf(Lat')$, the bilinear form $LL$ is defined as

\begin{align} \nonumber
LL(a_{\vec{j}}\otimes b_{\vec{i}}, a'_{\vec{j}}\otimes b'_{\vec{i}}) =& \textcolor{black}{\mu^{\frac{(1-n)(m-1)}{2}}} \sqrt{d(X_{\vec{i},\vec{j}})d(X'_{\vec{i},\vec{j}})}
\\\label{Equ:bilinear form}
&\raisebox{0cm}{
\begin{tikzpicture}
\myGT{3}{1};
%\node at (-1,0,-1) {$\displaystyle LL(a_{\vec{j}}\otimes b_{\vec{i}}, a'_{\vec{j}}\otimes b'_{\vec{i}}) = \textcolor{orange}{\mu^{\frac{(1-n)(m-1)}{2}}} \prod_{1\leq i \leq n,0\leq j\leq m-1}\sqrt{d(X_{i,j})d(X'_{i,j})}$};
            \foreach \x in {1,2,3,4} {
\begin{scope}[shift={(.5,0,.5)}]
            \draw[red](\x,-0.3,0)--(\x,-0.3,-2);
            \node at (\x,-.5,-1) {$b'_{\x}$};
\end{scope}
            \draw[blue](\x,-0.3,0)--(\x,-0.3,-2);
            \node at (\x,-.5,-1) {$b_{\x}$};
               \foreach \z in {0,1,2} {
\begin{scope}[shift={(.5,0,.5)}]
                    \drawWL{red}{\x,0,-\z}{\x,-0.3,-\z};
\end{scope}
                    \drawWL{blue}{\x,0,-\z}{\x,-0.3,-\z};
                 }
                }
               \foreach \z in {0,1,2} {
\begin{scope}[shift={(.5,0,.5)}]
                \drawWL{red} {1,0,-\z}{4,0,-\z};
                \node at (2.5,.07,-\z) {$a'_{\z}$};
\end{scope}
                \drawWL{blue} {1,0,-\z}{4,0,-\z};
                \node at (2.5,.05,-\z) {$a_{\z}$};
                 }
   \foreach \z in {0,1,2} {
    \foreach \x in {1,2,3,4} {
%    \fill[red]  (\x,0,\z) circle (.2);
%    \node[red] at (\x,0,\z) {$\bullet$};
%    \node[red] at (\x,-0.6,\z) {$\bullet$};
\begin{scope}[shift={(.5,0,.5)}]
    \node[red] at (\x,0,-\z) {$\bullet$};
    \node[red] at (\x,-0.3,-\z) {$\bullet$};
\end{scope}
    \node[blue] at (\x,0,-\z) {$\bullet$};
    \node[blue] at (\x,-0.3,-\z) {$\bullet$};
%    \node at (\x-.2,-0.3,-\z) {$X_{\x,\z}$};
%\draw[blue] (\x-.1,-0.3+.1,-\z)--(\x,-0.3,-\z)--(\x+.1,-0.3+.1,-\z);
    }}
\end{tikzpicture}
}
\end{align}

When $m=0$ or $n=0$, we define the configuration space as the ground field. We define $LL$ as the multiplication of the two scalars.

\begin{theorem}\label{Thm:MN Self-duality}
The configurations spaces of the lattice and the dual lattice are dual to each other.
Precisely the map from $Conf(Lat)$ to the dual space of $Conf(Lat')$ induced by $LL(-,-)$ is an isometry.
\end{theorem}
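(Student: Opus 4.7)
The plan is to reduce the isometry to the modular killing-ring identity for $\C$. First I would pick ONBs of the factors $\hom(1,X_{\vec{i},j})$ and $\hom(X_{i,\vec{j}},1)$ using the $ONB$/$ONB^*$ notation; together with the weight $\sqrt{d(X_{\vec{i},\vec{j}})}\,\mu^{(1-n)(m-1)/4}$ from Fig.~\ref{Fig:configuration} these assemble into an ONB of $Conf(Lat)$, and similarly an ONB $\{\eta\}$ of $Conf(Lat')$. Since both spaces are finite dimensional, the map $\xi\mapsto LL(\xi,\cdot)$ is an isometry if and only if the Gram identity
\[
\sum_{\eta}LL(\xi,\eta)\,\overline{LL(\xi',\eta)}=\langle\xi,\xi'\rangle
\]
holds for all basis vectors $\xi,\xi'\in Conf(Lat)$; I would prove the isometry in this form.

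To simplify $\overline{LL(\xi',\eta)}$, I would use $\theta_\C=\rho_\pi\circ *$ from \eqref{Equ:theta=rho*}: reflecting the 3D diagram defining $LL(\xi',\eta)$ and applying $*$ to each of its morphisms produces a diagram whose scalar value is precisely $\overline{LL(\xi',\eta)}$. Gluing this reflected diagram onto the diagram for $LL(\xi,\eta)$ along their common red boundary represents the product as a single closed 3D diagram containing two copies of the red data and one copy each of $\xi$ and $\xi'$ in blue. Summing over $\eta$ and expanding the two red copies via the resolution of identity \eqref{Equ:resolution of identity} then replaces the red basis vectors by closed red strings which, because of the braidings built into the 3D picture, link the blue strings coming from $\xi$ and $\xi'$.

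At this stage the expression is a sum of modular ``killing-ring'' configurations; modularity of $\C$ gives
\[
\sum_{X'\in Irr}d(X')\,(X'\text{-loop around a bundle of blue strings of total type }Y)=\mu\,\delta_{Y,1}.
\]
Applying this iteratively forces every encircled blue bundle to be trivial, which identifies the simple labels of $\xi$ and $\xi'$ at each lattice site and, after evaluating the residual pairings at the sites, identifies their internal morphisms as well. Each killing contributes one factor of $\mu$, and the total count of independent killings on the torus $Lat$ equals the exponent hidden in the prefactor $\mu^{(1-n)(m-1)/2}$ appearing in the squared bilinear form, so all $\mu$'s cancel and the diagram reduces to $\langle\xi,\xi'\rangle$.

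The main obstacle I expect is the topological bookkeeping in the preceding paragraph: one must verify that after the resolution of identity the red strings genuinely form linking loops around the correct bundles of blue strings, with braiding orientation compatible with the killing-ring identity (whose statement depends on the sign of the crossing through the $S$-matrix), and one must carefully count the number of independent killings taking into account the global constraints coming from the torus topology of $Lat$. Once this is set up, matching the $\sqrt{d(X_{\vec{i},\vec{j}})}$ weights and the $\mu^{(1-n)(m-1)/4}$ normalization is a routine consequence of the way the ONB of $Conf(Lat)$ was defined.
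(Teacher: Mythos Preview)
Your approach is genuinely different from the paper's, and the difference is worth spelling out. The paper does \emph{not} attack the Gram identity for general $(m,n)$ directly. Instead it runs a bi-induction: the only place modularity is invoked as an honest killing-ring computation is the base case $m=n=2$, where the diagram in \eqref{Equ:bilinear form} is literally a Hopf link and the statement reduces to unitarity of the $S$-matrix. From there the paper shows (Lemmas~\ref{Lem:star}, \ref{Lem:LLphi1}, \ref{Lem:LLphi2}) that $LL$ intertwines the elementary planar operations $\wedge$, $\star$, $\iota_k$, $\phi_k$; Proposition~\ref{Prop:Iso-isometry} then reduces the inner product $\langle\Psi\F(x),\Psi\F(y)\rangle$ to a string of $\phi$'s applied to $x\star y$, which equals $\langle x,y\rangle$ by the intertwining. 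The bootstrap order is stated explicitly at the end of \S\ref{Sec:Duality}: $(m,n)=(2,2)\Rightarrow$ planar algebra for $m=2\Rightarrow$ all $n$ at $m=2\Leftrightarrow$ all $m$ at $n=2\Rightarrow$ planar algebra for all $m\Rightarrow$ all $(m,n)$. So the paper's proof never performs the global $nm$-site killing computation you sketch; it trades that for algebraic compatibility lemmas that only need the $n=2$ case as input (via Proposition~\ref{Prop:0 map'}).

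As for your direct route, there is a real obstacle beyond bookkeeping. When you sum the two closed diagrams over the ONB $\{a'_j\}$ of $\hom(1,X'_{\vec{i},j})$, the identity you are using is $\sum_\alpha (f\circ\alpha)(\alpha^*\circ g)=f\circ\bigl(\sum_\alpha \alpha\alpha^*\bigr)\circ g$, and $\sum_\alpha \alpha\alpha^*$ is the projector onto the trivial isotypic component of $X'_{\vec{i},j}$, \emph{not} the identity on the $n$ strands. So the gluing does not simply produce $nm$ independent red circles; it inserts projectors that couple the labels $X'_{1,j},\dots,X'_{n,j}$ in each column and $X'_{i,0},\dots,X'_{i,m-1}$ in each row. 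The weight $d(X'_{\vec{i},\vec{j}})=\prod d(X'_{i,j})$ is likewise shared between rows and columns, so you cannot distribute one $d(X'_{i,j})$ per loop to feed the killing identity. Even if one untangles this, the naive count gives $nm$ red loops and hence $\mu^{nm}$, whereas cancelling the prefactor $\mu^{(1-n)(m-1)}$ would require $\mu^{(n-1)(m-1)}$; the surplus $\mu^{n+m-1}$ has to be absorbed somewhere, and your outline does not say where. This is exactly the kind of global torus constraint that the paper's inductive argument sidesteps by reducing everything to the single Hopf link.
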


We first prove the case for $m=n=2$. We prove the general case by a bi-induction in the rest of the paper. The order of the proofs is shown at the end of this Section.

\begin{proof}[Proof for the case $m=n=2$:]
 When $m=n=2$, the diagram in Equation \eqref{Equ:bilinear form} becomes the Hopf link and $LL$ defines the $S$ matrix of $\C$.

\begin{center}
\raisebox{0cm}{
\begin{tikzpicture}
\myGT{3}{1};
%\node at (-1,0,-1) {$\displaystyle LL(a_{\vec{j}}\otimes b_{\vec{i}}, a'_{\vec{j}}\otimes b'_{\vec{i}}) = \textcolor{orange}{\mu^{\frac{(1-n)(m-1)}{2}}} \prod_{1\leq i \leq n,0\leq j\leq m-1}\sqrt{d(X_{i,j})d(X'_{i,j})}$};
            \foreach \x in {1,2} {
\begin{scope}[shift={(.5,0,.5)}]
            \draw[red](\x,-0.3,0)--(\x,-0.3,-1);
%            \node at (\x,-.5,-1) {$b'_{\x}$};
\end{scope}
            \draw[blue](\x,-0.3,0)--(\x,-0.3,-1);
%  \texttt{}            \node at (\x,-.5,-1) {$b_{\x}$};
               \foreach \z in {0,1} {
\begin{scope}[shift={(.5,0,.5)}]
                    \drawWL{red}{\x,0,-\z}{\x,-0.3,-\z};
\end{scope}
                    \drawWL{blue}{\x,0,-\z}{\x,-0.3,-\z};
                 }
                }
               \foreach \z in {0,1} {
\begin{scope}[shift={(.5,0,.5)}]
                \drawWL{red} {1,0,-\z}{2,0,-\z};
%                \node at (2.5,.07,-\z) {$a'_{\z}$};
\end{scope}
                \drawWL{blue} {1,0,-\z}{2,0,-\z};
%                \node at (2.5,.05,-\z) {$a_{\z}$};
                 }
   \foreach \z in {0,1} {
    \foreach \x in {1,2 } {
%    \fill[red]  (\x,0,\z) circle (.2);
%    \node[red] at (\x,0,\z) {$\bullet$};
%    \node[red] at (\x,-0.6,\z) {$\bullet$};
\begin{scope}[shift={(.5,0,.5)}]
    \node[red] at (\x,0,-\z) {$\bullet$};
    \node[red] at (\x,-0.3,-\z) {$\bullet$};
\end{scope}
    \node[blue] at (\x,0,-\z) {$\bullet$};
    \node[blue] at (\x,-0.3,-\z) {$\bullet$};
%    \node at (\x-.2,-0.3,-\z) {$X_{\x,\z}$};
%\draw[blue] (\x-.1,-0.3+.1,-\z)--(\x,-0.3,-\z)--(\x+.1,-0.3+.1,-\z);
    }}
\end{tikzpicture}
}

\end{center}

 By the modularity of $\C$, the map induced by $LL$ is an isometry.
\end{proof}

\begin{proposition}\label{Prop:basis-independence}
Suppose $V$ is a Hilbert space and $\{\alpha_i\}$ is an ONB. Let $V'$ be the dual space of $V$. For $f\in V'$, a linear functional on $V$,
  \begin{align}
  r(f)=\sum_i \overline{f(\alpha_i)}\alpha_i
  \end{align}
  is independent of the choice of the basis.
\end{proposition}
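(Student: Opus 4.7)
The plan is to identify $r(f)$ with the Riesz vector representing $f$, which is manifestly basis-independent. First I would invoke the Riesz representation theorem: since $V$ is a Hilbert space, for every continuous linear functional $f \in V'$ there is a unique vector $v_f \in V$ such that $f(w) = \langle w, v_f\rangle$ for all $w \in V$ (using the convention linear in the first argument, conjugate-linear in the second). This vector $v_f$ is defined purely in terms of $f$ and the inner product on $V$, with no reference to any basis.

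Next I would compute the coefficients appearing in $r(f)$. Substituting $w = \alpha_i$ yields $f(\alpha_i) = \langle \alpha_i, v_f\rangle$, so $\overline{f(\alpha_i)} = \overline{\langle \alpha_i, v_f\rangle} = \langle v_f, \alpha_i\rangle$. Therefore
\begin{align*}
r(f) = \sum_i \overline{f(\alpha_i)}\, \alpha_i = \sum_i \langle v_f, \alpha_i\rangle\, \alpha_i.
\end{align*}
By the Parseval expansion of $v_f$ in the orthonormal basis $\{\alpha_i\}$, the right-hand side equals $v_f$. Hence $r(f) = v_f$, which depends only on $f$ and not on the particular choice of ONB.

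There is essentially no obstacle, since the statement is a direct consequence of Riesz representation combined with the Parseval identity. The only subtlety, if one wishes to be fastidious, is the case of infinite-dimensional $V$: one must justify that the sum $\sum_i \overline{f(\alpha_i)} \alpha_i$ converges in norm, which follows from Bessel's inequality applied to $v_f$ (the coefficients $\langle v_f, \alpha_i\rangle$ are square-summable with $\ell^2$-norm equal to $\|v_f\|$). In the setting of the paper the relevant Hilbert spaces $\hom_{\C^m}(1, \gamma_m^n)$ and their tensor products are finite-dimensional, so this convergence issue does not even arise and the proof reduces to the two identities above.
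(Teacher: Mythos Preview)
Your argument is correct and is precisely what the paper has in mind: the paper's proof reads ``It follows directly from definition'' and immediately afterwards identifies $r$ with the Riesz representation. You have simply written out that identification explicitly via the Parseval expansion, which is the intended content.
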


\begin{proof}
It follows directly from definition.
\end{proof}

The map $r: V^*\to V$ is an anti-isometry which is well-known as the Riesz representation.
Therefore we obtain an anti-isometry $D:Conf(Lat')\to Conf(Lat)$ that we call the duality map:

\begin{definition}[duality maps]
We define
\begin{align*}
  \D_+(x)&=\sum_{x' \in B' } \overline{LL(x,x')}x',\\
  \D_-(x')&=\sum_{x \in B } \overline{LL(x,x')}x,
\end{align*}
where $B$ is an ONB of  $Conf(Lat)$ and $B'$ is an ONB of  $Conf(Lat')$.
\end{definition}

Therefore Theorem \ref{Thm:MN Self-duality} is equivalent to the following Proposition.

\begin{proposition}\label{Prop:Iso-isometry}
The map $\D_+$ is an anti-linear isometry from $Conf(Lat)$ to $Conf(Lat')$, and $D_-$ is its inverse.
\end{proposition}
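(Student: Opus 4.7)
The plan is to identify $\D_+$ with the composition of the Riesz representation and the linear map induced by $LL$, so that both assertions in the proposition reduce to Theorem~\ref{Thm:MN Self-duality}.

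Concretely, let $\Phi \colon Conf(Lat) \to Conf(Lat')^{*}$ be the linear map $x \mapsto LL(x,-)$, and let $r$ denote the anti-linear Riesz isomorphism of Proposition~\ref{Prop:basis-independence}. Unwinding the defining formula,
\begin{align*}
\D_+(x) = \sum_{x' \in B'} \overline{LL(x,x')}\, x' = r\bigl(LL(x,-)\bigr) = r\bigl(\Phi(x)\bigr),
\end{align*}
so $\D_+ = r \circ \Phi$. Anti-linearity of $\D_+$ is then immediate from the anti-linearity of $r$. Since $r$ is a canonical anti-isometry (independent of the choice of basis), $\D_+$ is an anti-linear isometry if and only if $\Phi$ is a linear isometry, and the latter is exactly the content of Theorem~\ref{Thm:MN Self-duality}. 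This simultaneously gives the isometry statement in the proposition and makes the claimed equivalence between the proposition and Theorem~\ref{Thm:MN Self-duality} precise.

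It remains to verify that $\D_-$ is the two-sided inverse of $\D_+$. Expanding in the ONBs $B$ and $B'$ and using anti-linearity of $\D_-$,
\begin{align*}
\D_-(\D_+(x)) = \sum_{y \in B} \Biggl( \sum_{x' \in B'} LL(x,x')\,\overline{LL(y,x')}\Biggr)\, y.
\end{align*}
The inner parenthesis is precisely $\langle \Phi(x), \Phi(y) \rangle$ in the induced inner product on $Conf(Lat')^{*}$; by the isometry property of $\Phi$ this equals $\langle x, y \rangle$, and the outer sum collapses to $x$ because $B$ is orthonormal. The identity $\D_+ \circ \D_- = \mathrm{id}$ is entirely symmetric. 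The only technical point one needs to track carefully is the sesquilinearity convention, so that the inner coefficient really is $\langle \Phi(x), \Phi(y) \rangle$ and not its complex conjugate; once the conventions are aligned, there is no further obstacle, and the proposition follows directly from Theorem~\ref{Thm:MN Self-duality} (already established in the base case $m=n=2$).
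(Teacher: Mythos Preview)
Your argument is correct and is essentially the same as the paper's: the paper also reduces the Proposition to Theorem~\ref{Thm:MN Self-duality} via the Riesz representation (this is the content of the sentence ``The map $r: V^*\to V$ is an anti-isometry\ldots Therefore we obtain an anti-isometry $D$\ldots'' immediately preceding the definition of $\D_\pm$), and simply does not spell out the inverse computation you give. One caution: your closing parenthetical ``(already established in the base case $m=n=2$)'' is potentially misleading, since your reduction requires Theorem~\ref{Thm:MN Self-duality} for the given $m,n$, and for general $m,n$ that theorem is only obtained later via the paper's inductive scheme---so at this point in the logical flow the Proposition is proved only in the cases where Theorem~\ref{Thm:MN Self-duality} is already in hand; also, avoid reusing the symbol $\Phi$, which the paper reserves for the identification $\hom_{\C^m}(1,\gamma^n)\cong Conf(\C)_{n,m}$.
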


\begin{definition}
We use $1_{n,m}$ to denote the trivial configuration whose $Z$-, $X$- ,$Y$-configurations are all 1.
We define
\begin{align}\label{Equ:mu}
\mu_{n,m}:&=\D_-(1_{n,m}).
\end{align}
\end{definition}

\begin{definition}
We define $L$ as a linear functional on $Conf(Lat)$ as $L(x)=LL(x,1_{n,m})$.
\end{definition}
Then
\begin{align}\label{Equ:L}
L(a_{\vec{j}}\otimes b_{\vec{i}}) &=\textcolor{black}{\mu^{\frac{(1-n)(m-1)}{2}}} \sqrt{d(X_{\vec{i},\vec{j}})}
\raisebox{-2cm}{
\begin{tikzpicture}
\myGT{3}{1};
            \foreach \x in {1,2,3,4} {
%            \draw[red](\x,-0.3,1)--(\x,-0.3,7);
            \draw[blue](\x,-0.3,0)--(\x,-0.3,-2);
            \node at (\x,-.5,-1) {$b_{\x}$};
               \foreach \z in {0,1,2} {
%                    \drawWL{red}{\x,0,\z}{\x,-0.3,\z};
%                    \drawWL{blue}{\x,0,-\z}{\x,-0.3,-\z};
                 }
                }
               \foreach \z in {0,1,2} {
%                \drawWL{red} {1,0,\z}{7,0,\z};
                \drawWL{blue} {1,0,-\z}{4,0,-\z};
                \node at (2.5,.05,-\z) {$a_{\z}$};
                 }
   \foreach \z in {0,1,2} {
    \foreach \x in {1,2,3,4} {
%    \fill[red]  (\x,0,\z) circle (.2);
%    \node[red] at (\x,0,\z) {$\bullet$};
%    \node[red] at (\x,-0.3,\z) {$\bullet$};
    \node[blue] at (\x,0,-\z) {$\bullet$};
    \node[blue] at (\x,-0.3,-\z) {$\bullet$};
                    \draw[blue] (\x,-.15,-\z)--(\x,-0.3,-\z);
%                    \fill[green!20] (\x-.1,-.15,-\z+.1)--++(.2,0,0)--++(0,0,-.2)--++(-.2,0,0)--++(0,0,.2);
                    \draw[blue] (\x,0,-\z)--(\x,-0.15,-\z);
%    \node at (\x-.2,-0.15,-\z) {$X_{\x,\z}$};
%\draw[blue] (\x-.05,-0.15+.05,-\z)--(\x,-0.15,-\z)--(\x+.05,-0.15+.05,-\z);
    }}
\end{tikzpicture}
}.
\end{align}
and
\begin{align}\label{Equ:mu L}
  \mu_{n,m}=\sum_{\alpha \in B } \overline{L(\alpha)} \alpha,
\end{align}
where $B$ be is an ONB of $Conf(Lat)$.

In \S \ref{Sec:Action on lattices}, we study the actions of rotations and reflections in $X$- and $Y$-directions on the lattices and the induced actions the configuration spaces.
In \S \ref{Sec:JW MTC}, \ref{Sec:PA of QM}, \ref{Sec:M self-duality}, we fix $m$ and study the structure of the configuration space for different $n$.
We prove that these configuration space admit the action of planar tangles (or operas) in the $X$-direction:
\begin{theorem}\label{Thm:MNtoPA}
For each $m\geq 1$, $\{\mathscr{S}_{n}=Conf(\C)_{m,n}\}_{n\in \mathbb{N}}$ is an unshaded subfactor planar algebra.
\end{theorem}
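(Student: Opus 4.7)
The plan is to exhibit $\{Conf(\C)_{m,n}\}_{n \in \mathbb{N}}$ as the $n$-box spaces of the standard planar algebra built from a haploid symmetric Frobenius algebra $\gamma_m$ living in $\C^m$, and then invoke the general fact that such an algebra produces an unshaded subfactor planar algebra whose $n$-box space is $\hom_{\C^m}(1, \gamma_m^n)$.

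First I would define the algebra. As an object of $\C^m$, set
$$\gamma_m := \bigoplus_{X_0, \ldots, X_{m-1} \in Irr} \hom_{\C}(X_0 \otimes X_1 \otimes \cdots \otimes X_{m-1}, 1) \otimes (X_0 \boxtimes X_1 \boxtimes \cdots \boxtimes X_{m-1}),$$
where the finite-dimensional multiplicity space is treated as a scalar tensor factor. The unit $1 \to \gamma_m$ is the inclusion into the $X_0 = \cdots = X_{m-1} = 1$ summand, and the counit is its adjoint. The multiplication $\gamma_m \otimes \gamma_m \to \gamma_m$ is assembled slot by slot from the fusion maps in $\C$ together with a contraction on the multiplicity spaces built from duality and the resolution of identity \eqref{Equ:resolution of identity}, with prescribed powers of the quantum dimensions $d(X_i)$ inserted so that the whole structure is a symmetric $C^*$-Frobenius algebra; the comultiplication is then forced by the Frobenius relation. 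One then verifies associativity, the Frobenius identity, and the haploidness $\hom_{\C^m}(1, \gamma_m) = \mathbb{C}$ directly from the formulas.

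Next I would establish the Hilbert-space identification $Conf(\C)_{m,n} \cong \hom_{\C^m}(1, \gamma_m^n)$. Expanding the tensor power in $\C^m$, the $i$-th copy of $\gamma_m$ contributes the object $X_{i,0} \boxtimes \cdots \boxtimes X_{i,m-1}$ weighted by the multiplicity $\hom_{\C}(X_{i,\vec{j}}, 1)$, which is exactly the slot for $b_i$ in the $Y$-configuration; applying $\hom_{\C^m}(1,-)$ factors over the $m$ coordinates and yields, in the $j$-th slot, $\hom_{\C}(1, X_{1,j} \otimes \cdots \otimes X_{n,j})$, which is the $X$-configuration $a_j$. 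The normalization $\mu^{(1-n)(m-1)/4}\sqrt{d(X_{\vec{i},\vec{j}})}$ appearing in Figure \ref{Fig:configuration} rescales this bijection to an isometry for the standard Hilbert-space structures.

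Finally, the planar algebra structure on $\{\hom_{\C^m}(1, \gamma_m^n)\}_n$ is the one canonically attached to any Frobenius algebra: through-strings are the identity on $\gamma_m$, caps and cups are the unit and counit, and the elementary trivalent tangles are realized by the multiplication and comultiplication. Compatibility with composition of tangles reduces to the Frobenius algebra axioms combined with the isotopy moves that hold automatically in any spherical fusion category. It remains to check the subfactor planar algebra axioms: finite-dimensionality of $\mathscr{S}_n$ is immediate from the rationality of $\C$; $\dim \mathscr{S}_0 = 1$ matches both the convention that $Conf(\C)_{m,0}$ is the ground field and the computation $\hom_{\C^m}(1,1) = \mathbb{C}$; sphericity follows from $\gamma_m$ being symmetric in a spherical fusion category; and positivity of the Markov trace is inherited from the $C^*$-structure on $\C^m$. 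The main obstacle I anticipate is calibrating the scalar normalizations on $\gamma_m$: the powers of quantum dimensions must be chosen so that (i) the Frobenius and associativity relations hold without extra scalars, (ii) the induced inner product on $\hom_{\C^m}(1, \gamma_m^n)$ matches the prescribed Hilbert-space structure on $Conf(\C)_{m,n}$ including the $\mu$ and $d(X_{i,j})$ factors in Figure \ref{Fig:configuration}, and (iii) the resulting Markov trace is simultaneously positive and spherical. Once this bookkeeping is pinned down, positivity and sphericity reduce to the semisimplicity and unitarity of $\C^m$.
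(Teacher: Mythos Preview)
Your construction correctly recovers the Frobenius algebra $(\gamma_m,\mu_3)$ and the identification $Conf(\C)_{m,n}\cong\hom_{\C^m}(1,\gamma_m^n)$; this is exactly how the paper obtains the \emph{shaded} subfactor planar algebra $\SA_{\bullet,\pm}$ (Theorem~\ref{Thm:JW subfactor} and the beginning of \S\ref{Sec:PA of QM}). The gap is in the word ``unshaded''. A haploid symmetric special $C^*$-Frobenius algebra (a Q-system) in a unitary fusion category yields a shaded subfactor planar algebra, with distinct spaces $\SA_{n,+}$ and $\SA_{n,-}$ and two kinds of caps/cups ($\iota_0,\phi_0$ versus $\iota_1,\phi_1$, the latter built from $\mu_3$). ``Symmetric'' here gives sphericity, not unshadedness; to forget the shading one must produce a planar $*$-isomorphism $\Psi:\SA_{n,-}\to\SA_{n,+}$ intertwining all tangles, i.e.\ one must prove the subfactor is \emph{self-dual}. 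Your proposal never does this, and indeed never invokes the modularity of $\C$ at all.

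In the paper this self-duality is the main result: $\Psi$ is defined through the bilinear form $LL$ (Definition~\ref{Def:SFT}), and showing that $\Psi$ commutes with the elementary tangles occupies \S\ref{Sec:M self-duality} (Propositions~\ref{Prop:Iso-rho-*}, \ref{Prop:Iso-tensor}, \ref{Prop:Iso-con}, \ref{Prop:Iso-isometry}, \ref{Prop:Iso-inc}, culminating in Theorem~\ref{Thm:selfdual}). The proof is a bi-induction whose base case $m=n=2$ is precisely the unitarity of the $S$-matrix (Theorem~\ref{Thm:MN Self-duality} for $m=n=2$), and the inductive step uses Proposition~\ref{Prop:0 map'} in the proof of Lemma~\ref{Lem:LLphi2}. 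So the ``general fact'' you invoke does not exist at the level of generality you need: as the paper stresses (Remark after Theorem~\ref{Thm:selfdual}), the unshaded structure is a genuinely modular phenomenon, not a formal consequence of the Frobenius axioms.
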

This defines the self-dual $m$-interval Jones-Wassermann subfactor. It is proved in Theorems \ref{Thm:JW subfactor} and \ref{Thm:selfdual}.
Moreover, the duality map defines the string Fourier transform (SFT) of the unshaded planar algebra.

\begin{remark}
If we fix $n$, instead of $m$, then all the results also work. So we also have the action of planar tangles on the configuration spaces in the $Y$-direction. Therefore the configuration spaces $\{Conf(\C)_{m,n}\}_{m,n\in \mathbb{N}}\}$ admit the action of planar tangles in two different directions.
\end{remark}

\begin{proposition}\label{Prop:0 map}
Let $B$ be an ONB of $\hom(\gamma,1)$ whose elements are $Y$-configurations. Let $1_\gamma$ be the canonical inclusion from $1$ to $\gamma$ and $b_1,b_2\in\hom(\gamma,1)$. Then

\begin{align*}
\delta^{-2}\sum_{b'\in B}d(X(b'))
\raisebox{-2cm}{
\begin{tikzpicture}
\myGT{3}{1};
            \foreach \x in {1,2} {
\begin{scope}[shift={(.5,0,.5)}]
            \draw[red](\x,-0.3,0)--(\x,-0.3,-2);
%            \node at (\x,-.5,-1) {$b'_{\x}$};
\end{scope}
            \draw[blue](\x,-0.3,0)--(\x,-0.3,-2);
            \node at (\x,-.5,-1) {$b_{\x}$};
               \foreach \z in {0,1,2} {
\begin{scope}[shift={(.5,0,.5)}]
                    \drawWL{red}{\x,0,-\z}{\x,-0.3,-\z};
\end{scope}
                    \drawWL{blue}{\x,0,-\z}{\x,-0.3,-\z};
                 }
                }
               \foreach \z in {0,1,2} {
\begin{scope}[shift={(.5,0,.5)}]
                \drawWL{red} {1,0,-\z}{2,0,-\z};
%                \node at (2.5,.07,-\z) {$a'_{\z}$};
\end{scope}
                \drawWL{blue} {1,0,-\z}{2,0,-\z};
%                \node at (2.5,.05,-\z) {$a_{\z}$};
                 }
   \foreach \z in {0,1,2} {
    \foreach \x in {1,2} {
%    \fill[red]  (\x,0,\z) circle (.2);
%    \node[red] at (\x,0,\z) {$\bullet$};
%    \node[red] at (\x,-0.6,\z) {$\bullet$};
\begin{scope}[shift={(.5,0,.5)}]
    \node[red] at (\x,0,-\z) {$\bullet$};
    \node[red] at (\x,-0.3,-\z) {$\bullet$};
\end{scope}
    \node[blue] at (\x,0,-\z) {$\bullet$};
    \node[blue] at (\x,-0.3,-\z) {$\bullet$};
%    \node at (\x-.2,-0.3,-\z) {$X_{\x,\z}$};
%\draw[blue] (\x-.1,-0.3+.1,-\z)--(\x,-0.3,-\z)--(\x+.1,-0.3+.1,-\z);
    }}
\node at (1.5,-.5,-.5) {$b'$};
\node at (2.6,-.5,-.5) {$\theta_1(b')$};
\end{tikzpicture}
}
&=\langle 1_\gamma^*, b_1 \rangle \langle 1_\gamma^*, b_2 \rangle
\end{align*}
\end{proposition}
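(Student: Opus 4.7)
The plan is to evaluate the weighted ONB-sum by recognising it as a Kirby element and then applying the killing-ring (handle-slide) identity, which holds because $\C$ (and hence $\C^m$) is modular. After that, the two blue columns become independently projected onto the trivial summand of $\gamma$, spanned by $1_\gamma^*$, which is exactly the content of the right-hand side.

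First I would invoke Proposition~\ref{Prop:basis-independence} so that the sum $\sum_{b'\in B} d(X(b'))\, b'\otimes \theta_1(b')$ may be computed in the ONB that diagonalises the $Z$-configuration, i.e.\ the basis labelled by boundary types $X(b')\in Irr^{m}$. In that basis the weighted sum splits into a sum over irreducibles of $\C^m$ with their total quantum dimensions and becomes (up to standard scalars) the Kirby colour $\omega\otimes\omega$ of $\C^m$ installed along the two red columns; the presence of $\theta_1$ is exactly what is needed to braid the two red strands into the symmetric form that the killing-ring identity requires.

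Next, I would apply the killing-ring identity in its standard form: encircling a simple strand of type $X$ by $\omega$ yields $\mu\cdot \delta_{X\cong 1}\cdot 1_X$, so encircling a general strand produces $\mu$ times the projection onto its trivial summand. At $n=2$ the two red columns encircle the two blue columns $b_1$ and $b_2$ separately, so the identity applies independently to each column and replaces $b_i$ by $\langle 1_\gamma^*,b_i\rangle\cdot 1_\gamma^*$. The two applications of the identity contribute $\mu^{2}$; the prefactor $\delta^{-2}=\mu^{-1}$ in the statement eats one $\mu$, and the other $\mu$ is eaten by the $\mu^{(1-n)(m-1)/2}$- and $\sqrt{d(X(b'))}$-type scalars built into the definition of $LL$ in~\eqref{Equ:bilinear form}. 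The surviving scalar is exactly $\langle 1_\gamma^*,b_1\rangle\langle 1_\gamma^*,b_2\rangle$.

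The main obstacle, which I expect to be the delicate step, is justifying that the 3D linking pattern at $n=2$ really factorises into two independent encirclements — i.e., that the horizontal $X$-configurations at each level $z$ do not couple the two blue columns. Because these horizontals are the canonical inclusion $1_\gamma$, which factors through the trivial summand of $\gamma$, no non-trivial charge can be transferred from one blue column to the other; but extracting this algebraic fact cleanly from the diagrammatic calculus, and showing that it is compatible with the dual-lattice offset of red by $(+\tfrac12,0,+\tfrac12)$, is the place where bookkeeping is real work. Once that decoupling is in hand, the scalar match above closes the argument.
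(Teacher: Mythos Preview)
Your approach via the killing-ring identity is genuinely different from the paper's, but it has a real gap at precisely the point you flagged as delicate.

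The paper does not manipulate the diagram locally. Instead it invokes Theorem~\ref{Thm:MN Self-duality} (the isometry induced by $LL$) in the already-established case $n=2$: writing $\langle 1_\gamma^*, b_1\rangle\langle 1_\gamma^*, b_2\rangle = \langle 1_{m,2},\, a_{b_1}\otimes(b_1\otimes b_2)\rangle$, it expands this inner product as $\sum_{x'\in C'} LL(1_{m,2},x')\,\overline{LL(a_{b_1}\otimes(b_1\otimes b_2),x')}$. The factor $LL(1_{m,2},x')$ forces the boundary of $x'$ to be of the form $X(b')\otimes X(\theta_1(b'))$, and the remaining sum is exactly the left-hand side of the proposition. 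So the paper's argument is global: it uses the unitarity of the $LL$-pairing (proved by the bi-induction starting from the $S$-matrix at $m=n=2$) rather than a local handle-slide.

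Your factorisation claim fails for a concrete reason. The horizontal $X$-configurations at each level $z$ are \emph{not} the inclusion $1_\gamma:1\to\gamma$; they are the duality caps $1\to X_{1,z}\otimes \overline{X_{1,z}}$ in $\C$, one at each $z$, and these certainly transport charge between the two columns. Topologically the blue diagram is a \emph{single} closed loop (follow $(1,0)\to(2,0)\to(2,1)\to\cdots$ around via the caps and the $b_i$), and the red is another single loop linked with it; for $m=2$ the paper says explicitly that the picture is the Hopf link. There is no decomposition into two unlinked encirclements of $b_1$ and $b_2$ separately, so two independent applications of the killing-ring identity are not available. Relatedly, the weight in your sum is $\sum_{b'} d(X(b'))=\sum_{\vec Y} N_{\vec Y}\,d(\vec Y)$, not the Kirby weight $\sum_{\vec Y} d(\vec Y)^2$ of $\C^m$, so even the identification of the red sum with an $\omega$-colour requires more than you give. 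A direct diagrammatic proof may be possible, but it would have to go through the single-link picture and would essentially re-derive the $n=2$ case of Theorem~\ref{Thm:MN Self-duality} rather than bypass it.
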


\begin{proof}
Without loss of generality, we assume that $b_1$ and $b_2$ are unit vectors.
Note that if $X(b_1)\neq X(\theta_1(b_2))$, then both sides are zero.
We assume that $X(b_2)= X(\theta_1(b_1))$.

If a $Y$-configuration $b$ in $\hom(\gamma,1)$ is a unit vector, then
$$\dim \hom_{\C^m}(1, X(b) \otimes X(\theta(b)))=1.$$ So there is only one $X$-configuration with boundary $X(b) \otimes X(\theta(b))$ up to a scalar. Let $a_{b}$ be the the canonical inclusion from $1$ to $X(b) \otimes X(\theta(b))$ in $\C^m$. Let $C'=\{a'_{\vec{j}} \otimes b'_{\vec{i}}\}$ be an ONB of $Conf(Lat')$.
Applying Theorem \ref{Thm:MN Self-duality} for  $n=2$, we have that
\begin{align*}
&\langle 1_\gamma^*, b_1 \rangle \langle 1_\gamma^*, b_2 \rangle\\
=&\langle 1_{m,2}, a_{b_1} \otimes (b_1\otimes b_2) \rangle\\
=&\sum_{a'_{\vec{j}}\otimes b'_{\vec{i}} \in C'} LL( 1_{m,2}, a'_{\vec{j}}\otimes b'_{\vec{i}}) LL( a_{b_1} \otimes (b_1\otimes b_2),a'_{\vec{j}}\otimes b'_{\vec{i}} )\\
=&\sum_{b'\in B} LL(1_{m,2}, a'_{b'}\otimes (b'\otimes \theta_1(b') )) LL( a_{b_1} \otimes (b_1\otimes b_2),a'_{b'}\otimes (b'\otimes \theta_1(b') )) \\
=&\sum_{b'\in B} LL(1_{m,2}, a'_{b'}\otimes (b'\otimes \theta_1(b') )) LL( a_{b_1} \otimes (b_1\otimes b_2),a'_{b'}\otimes (b'\otimes \theta_1(b') )) \\
=&\delta^{-2} \sqrt{d(X(b_1))} \sum_{b'\in B} d(X(b'))
\scalebox{.5}{
\raisebox{-2cm}{
\begin{tikzpicture}
\myGT{3}{1};
            \foreach \x in {1,2} {
\begin{scope}[shift={(.5,0,.5)}]
            \draw[red](\x,-0.3,0)--(\x,-0.3,-2);
%            \node at (\x,-.5,-1) {$b'_{\x}$};
\end{scope}
            \draw[blue](\x,-0.3,0)--(\x,-0.3,-2);
            \node at (\x,-.5,-1) {$b_{\x}$};
               \foreach \z in {0,1,2} {
\begin{scope}[shift={(.5,0,.5)}]
                    \drawWL{red}{\x,0,-\z}{\x,-0.3,-\z};
\end{scope}
                    \drawWL{blue}{\x,0,-\z}{\x,-0.3,-\z};
                 }
                }
               \foreach \z in {0,1,2} {
\begin{scope}[shift={(.5,0,.5)}]
                \drawWL{red} {1,0,-\z}{2,0,-\z};
%                \node at (2.5,.07,-\z) {$a'_{\z}$};
\end{scope}
                \drawWL{blue} {1,0,-\z}{2,0,-\z};
%                \node at (2.5,.05,-\z) {$a_{\z}$};
                 }
   \foreach \z in {0,1,2} {
    \foreach \x in {1,2} {
%    \fill[red]  (\x,0,\z) circle (.2);
%    \node[red] at (\x,0,\z) {$\bullet$};
%    \node[red] at (\x,-0.6,\z) {$\bullet$};
\begin{scope}[shift={(.5,0,.5)}]
    \node[red] at (\x,0,-\z) {$\bullet$};
    \node[red] at (\x,-0.3,-\z) {$\bullet$};
\end{scope}
    \node[blue] at (\x,0,-\z) {$\bullet$};
    \node[blue] at (\x,-0.3,-\z) {$\bullet$};
%    \node at (\x-.2,-0.3,-\z) {$X_{\x,\z}$};
%\draw[blue] (\x-.1,-0.3+.1,-\z)--(\x,-0.3,-\z)--(\x+.1,-0.3+.1,-\z);
    }}
\node at (1.5,-.5,-.5) {$b'$};
\node at (2.6,-.5,-.5) {$\theta_1(b')$};
\end{tikzpicture}
}}
\end{align*}

If $X(b_1)\neq 1$, then both sides are zero. If $X(b_1)=1$, then $d(X(b_1))=1$ and the statement holds.
\end{proof}

If we switch $n$ and $m$ in Proposition \ref{Prop:0 map}, then we have obtain the following equivalent result:

\begin{proposition}\label{Prop:0 map'}
Take $\displaystyle \tilde{X}=\bigoplus_{X \in Irr} X$ and $1_{\tilde{X}}$ to be the conical inclusion from 1 to $\tilde{X}$. Then

\begin{align}\label{Equ:L}
\mu^{1-n} \sum_{X_{\vec{j}}\in Irr^n } d(X_{\vec{j}}) \sum_{\alpha \in ONB(X_{\vec{j}})}
\raisebox{-.8cm}{
\begin{tikzpicture}
\begin{scope}[xscale=.5,yscale=.5]
\myGT{3}{1};
\draw[red] (1,0,-1)--++(3,0,0);
            \foreach \x in {1,2,3}{
                \draw[white,WL] (\x+.5,-.5,-.5)--++(0,1,0);
                \draw[blue] (\x+.5,-.5,-.5)--++(0,1,0);
                }
\draw[white,WL] (1,0,0)--++(3,0,0);
\draw[red] (1,0,0)--++(3,0,0);
\node at (2.5,-.1,0){$\alpha^*$};
\node at (2.5,.1,-1){$\alpha$};
            \foreach \x in {1,2,3,4}{
                \draw[red] (\x,0,0)--(\x,0,-1);
                }
\end{scope}
\end{tikzpicture}
}
=&\quad\quad
\raisebox{-.8cm}{
\begin{tikzpicture}
\begin{scope}[xscale=.5,yscale=.5]
\myGT{3}{1};
\foreach \x in {1,2,3}{
                \draw[blue] (\x,-.5,-.5)--++(0,.3,0);
                \draw[blue] (\x,.5,-.5)--++(0,-.3,0);
\node[blue] at (\x,.2,-.5) {$\bullet$};
\node[blue] at (\x,-.2,-.5) {$\bullet$};
\node at (\x-.3,-.2,-.5) {$1_{\tilde{X}}$};
\node at (\x-.3,.2,-.5) {$1_{\tilde{X}}$};
                }
\end{scope}
\end{tikzpicture}
}.
\end{align}
\end{proposition}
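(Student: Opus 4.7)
The plan is to deduce Proposition \ref{Prop:0 map'} from Proposition \ref{Prop:0 map} by invoking the $m\leftrightarrow n$ symmetry of the bi-planar structure on $\{Conf(\C)_{m,n}\}_{m,n\in\mathbb{N}}$, which is already flagged in the remark following Theorem \ref{Thm:MNtoPA}. Inspecting the defining diagram \eqref{Equ:bilinear form}, the bilinear form $LL$ is manifestly symmetric under interchange of the two lattice directions (equivalently, under interchange of $X$- and $Y$-configurations): the scalar prefactor $\mu^{(1-n)(m-1)/2}$ is invariant under swapping $n-1$ with $m-1$, and the Hopf-link pattern of blue/red strands relabels to itself. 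Hence Theorem \ref{Thm:MN Self-duality} holds equally in either direction, and in particular its $m=2$ case is available.

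Concretely, I would run the proof of Proposition \ref{Prop:0 map} verbatim but with $m$ and $n$ exchanged. Under the swap:
\begin{itemize}
\item The Frobenius algebra $\gamma=\gamma_m$ in $\C^m$ is replaced by the object $\tilde X=\bigoplus_{X\in Irr}X$ of $\C$, and the canonical inclusion $1_\gamma\in\hom(1,\gamma)$ is replaced by $1_{\tilde X}\in\hom(1,\tilde X)$.
\item An ONB of $Y$-configurations in $\hom(\gamma,1)$ is replaced by the family $\{(X_{\vec j},\alpha):X_{\vec j}\in Irr^n,\ \alpha\in ONB(X_{\vec j})\}$ of $Z$-configurations with an ONB $X$-configuration, and the weight $d(X(b'))$ becomes $d(X_{\vec j})$.
\item The scalar $\delta^{-2}=\mu^{-1}$, which in Proposition \ref{Prop:0 map} comes from the $(m-1)$-factor of the $LL$-prefactor with $m=2$, becomes $\mu^{1-n}$ on the LHS of \eqref{Equ:L}.
\end{itemize}
Then the inputs $b_1,b_2$ on the left-hand side of Proposition \ref{Prop:0 map} translate under the swap into the two free insertions of $1_{\tilde X}$ on the right-hand side of \eqref{Equ:L}, since as in the proof of Proposition \ref{Prop:0 map} only the component along $1_{\tilde X}^*$ contributes nontrivially.

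The argument itself is then a direct transcription: expand the identity $\langle 1_{\tilde X}^*,\cdot\rangle\langle 1_{\tilde X}^*,\cdot\rangle=\langle 1_{2,n},\,a_{(1_{\tilde X})}\otimes(1_{\tilde X}\otimes 1_{\tilde X})\rangle$ in the self-dual basis of $Conf(Lat')$ supplied by Theorem \ref{Thm:MN Self-duality} (for $m=2$), restrict the sum to basis elements that are nonzero in the pairing, and read off \eqref{Equ:L}.

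The main obstacle is bookkeeping rather than content: one must check (i) that the prefactor $\mu^{(1-n)(m-1)/2}$ in $LL$ really does swap correctly between the two indices; (ii) that the Hopf-link picture in \eqref{Equ:bilinear form} is unchanged under interchanging the blue and red sublattices after the $90^\circ$ relabelling; and (iii) that the Frobenius-reciprocity identification $\hom(\overline Z, X\otimes Y)\cong\hom(1,X\otimes Y\otimes Z)$ fixed in the notation is applied on both sides consistently. None of these items is substantive, and once they are verified the proposition is immediate from Proposition \ref{Prop:0 map}.
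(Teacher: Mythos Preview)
Your proposal is correct and follows the paper's own approach: the paper also derives Proposition \ref{Prop:0 map'} from Proposition \ref{Prop:0 map} via the $m\leftrightarrow n$ symmetry, as announced in the sentence preceding the statement. The only difference is packaging: rather than re-running the proof of Proposition \ref{Prop:0 map} with the roles swapped, the paper simply observes that pairing both sides of \eqref{Equ:L} with an arbitrary element of $\hom(\tilde X^n,\tilde X^n)$ reproduces exactly the scalar identity of Proposition \ref{Prop:0 map}, so the already-established result applies directly.
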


\begin{proof}
  Taking inner product of both sides with an element in $\hom(\tilde{X}^n,\tilde{X}^n)$, the statement follows from Proposition \ref{Prop:0 map}.
\end{proof}

\begin{remark}
When $m=n=2$, Equation \eqref{Equ:L} is the killing relation.
\end{remark}

We prove Proposition \ref{Prop:0 map} and \ref{Prop:0 map'} using Theorem \ref{Thm:MN Self-duality}. Proposition \ref{Prop:0 map'} will be used in Lemma \ref{Lem:LLphi2}.
Then we prove Theorem \ref{Thm:MNtoPA} and Theorem \ref{Thm:MN Self-duality}.
We prove Theorems \ref{Thm:MN Self-duality} and \ref{Thm:MNtoPA} in the following order:
\begin{align*}
&\text{Theorem \ref{Thm:MN Self-duality} for $m=2$, $n=2$;}\\
\rightarrow&
\text{Theorem \ref{Thm:MNtoPA} for $m=2$;} \\
\rightarrow&
\text{Theorem \ref{Thm:MN Self-duality} for $m=2$, $n\geq 1$;} \\
\leftrightarrow&
\text{Theorem \ref{Thm:MN Self-duality} for $m\geq 1$, $n=2$;} \\
\rightarrow&
\text{Theorem \ref{Thm:MNtoPA} for $m\geq1$;} \\
\rightarrow&
\text{Theorem \ref{Thm:MN Self-duality} for $m\geq 1$, $n\geq 1$.} \\
\end{align*}
(When $m=1$, the configuration space $Conf(\C)_{m,n}$ is $\mathbb{C}$. The theorems are obvious.)

\section{Actions on configuration spaces}\label{Sec:Action on lattices}
\subsection{Automorphisms on the lattice}
Note that the lattice $\Z_m\times \Z_n$ is invariant under the following actions:
\begin{itemize}
\item The clockwise $2\pi/n$ rotation around the $Y$-direction $\rho_1$: $(i,j) \to (i-1,j)$.

\item The reflection in the $X$-direction $\theta_1$: $(i,j) \to (n+1-i,j)$.

\item The clockwise $2\pi/m$ rotation around the $X$-direction $\rho_2$: $(i,j) \to (i,j+1)$.

\item The reflection in the $Y$-direction $\theta_2$ $(i,j) \to (i,m-1-j)$.
\end{itemize}
Now let us define the induced action on the configuration space $Conf(\C)_{n,m}$.

For $k=1,2$, the induced actions on the $Z$-configurations are
\begin{align*}
\rho_k({X})_{i,j}&=X_{\rho_k^{-1}(i,j)},\\
\theta_k({X})_{i,j}&=\theta_{\C}(X_{\theta_k^{-1}(i,j)}).
\end{align*}

For an $X$-configuration $a_k$, we define

\begin{align*}
\rho_1(a_j)&=
\raisebox{-.5cm}{
\begin{tikzpicture}
\myGT{3}{1};
\node at (2.5,.05,0) {$a_j$};
\draw [blue] (2,0,0)--(4,0,0);
\foreach \x in {2,3,4} {
{
\drawWL{blue} {\x,0,0}{\x,-0.3,0};
\draw[blue] (2,0,0)--++(0,.2,0)--++(3,0,0)--++(0,-.2,0);
\draw[blue] (\x-.05,-0.2+.05,0)--(\x,-0.2,0)--(\x+.05,-0.2+.05,0);
\node at (\x-.2,-0.2,0) {$X_{\x,j}$};
\node[blue] at (\x,0,0) {$\bullet$};
}}
\foreach \x in {5} {
{
\drawWL{blue} {\x,0,0}{\x,-0.3,0};
\draw[blue] (2,0,0)--++(0,.2,0)--++(3,0,0)--++(0,-.5,0);
\draw[blue] (\x-.05,-0.2+.05,0)--(\x,-0.2,0)--(\x+.05,-0.2+.05,0);
\node at (\x-.2,-0.2,0) {$X_{1,j}$};
\node[blue] at (\x,0,0) {$\bullet$};
}}
\end{tikzpicture}
},\\
\theta_1(a_j)&=\theta_{\C}(a_j),\\
\rho_2(a_j)&=a_j,\\
\theta_2(a_j)&=
\raisebox{-.5cm}{
\begin{tikzpicture}
\myGT{3}{1};
\node at (2.5,.05,0) {$\theta_{\C}(a_j)$};
\draw [blue] (1,0,0)--(4,0,0);
\foreach \x in {1,2,3,4} {
{
\draw[white,WL] (\x,0,0)--++(0,-.1,0)--++(5-2*\x,-0.1*\x,0)--(5-\x,-.6,0);
\draw[blue] (\x,0,0)--++(0,-.1,0)--++(5-2*\x,-0.1*\x,0)--(5-\x,-.6,0);
\draw[blue] (\x-.05,-0.1+.05,0)--(\x,-0.1,0)--(\x+.05,-0.1+.05,0);
\node at (\x-.2,-0.5,0) {$\overline{X_{\x,j}}$};
\node[blue] at (\x,0,0) {$\bullet$};
}}
\end{tikzpicture}
}\\
&=\raisebox{-.5cm}{
\begin{tikzpicture}
\myGT{3}{1};
\node at (2.5,-.1,0) {$a_j^*$};
\draw [blue] (1,0,0)--(4,0,0);
\foreach \x in {1,2,3,4} {
{
%\drawWL{blue} {\x,0,0}{\x,-0.3,0};
\draw[white,WL] (\x,0,0)--++(0,.1,0) arc (0:180:.2) --++(0,-.5,0);
\draw[blue] (\x,0,0)--++(0,.1,0) arc (0:180:.2) --++(0,-.5,0);
\draw[blue] (\x-.05,0.1-.05,0)--(\x,0.1,0)--(\x+.05,0.1-.05,0);
\node at (\x-.6,-.3,0) {$\overline{X_{\x,j}}$};
\node[blue] at (\x,0,0) {$\bullet$};
}}
\end{tikzpicture}
}.
\end{align*}

For a $Y$-configuration $b_i$, we define

\begin{align*}
\rho_1(b_i)&=b_i,\\
\theta_1(b_i)&=
\raisebox{-1.5cm}{
\begin{tikzpicture}
\begin{scope}[xscale=1,yscale=1]
\myGT{3}{1};
\node at (0,-.9,-1) {$\theta_{\C}(b_i)$};
\foreach \z in {2,1,0}
{
\draw[white,WL] (0,0,-\z)--++(0,-.1,0)--++(0,-.1*\z,0)--(0,-0.5,\z-2)--(0,-0.6,\z-2);
\draw[blue] (0,0,-\z)--++(0,-.1,0)--++(0,-.1*\z,0)--(0,-0.5,\z-2)--(0,-0.6,\z-2);
}
\foreach \z in {2,1,0}
{
\node[blue] at (0,-0.6,-\z) {$\bullet$};
\node at (-.2,-0.1,-\z) {$\overline{X_{i,\z}}$};
}
\draw[blue] (0,-0.6,0)--(0,-0.6,-2);
\end{scope}
\end{tikzpicture}
}
=\raisebox{-1.5cm}{
\begin{tikzpicture}
\begin{scope}[xscale=1,yscale=1]
\myGT{3}{1};
\node at (0,.1,-1) {$b^*$};
\foreach \z in {2,1,0}
{
\draw[blue] (0,0,-\z) --++(0,-.1,0) arc (0:-180:.2 and .2) --++(0,.1,0);
\draw[blue] (-.05,-.05,-\z)--(0,-0.1,-\z)--(.05,-.05,-\z);
%\node[] at (0,0,-\z) {$\bullet$};
}
\draw[white,WL] (0,0,-2)--(0,0,0);
\draw[blue] (0,0,-2)--(0,0,0);
\foreach \z in {2,1,0}
{
\node[blue] at (0,0,-\z) {$\bullet$};
\node at (-0.6,-.1,-\z) {$\overline{X_{i,\z}}$};
%\node at (-1,0,-\z) {$\bullet$};
}
\end{scope}
\end{tikzpicture}}
,\\
\rho_2(b_i)&=
\raisebox{-1.5cm}{
\begin{tikzpicture}
\myGT{3}{1};
\draw [blue] (0,-0.6,-2)--(0,-0.6,-1);
\draw [blue] (0,-0.6,-1)--++(0,-.2,0)--++(0,0,-2)--++(0,.2,0);
\node at (0,-.7,-1.3) {$b_i$};
\foreach \z in {1,2}
{
\drawWL{blue} {0,-.3,-\z}{0,-0.6,-\z};
\draw[blue] (-.05,-0.5+.05,-\z)--(0,-0.5,-\z)--(.05,-0.5+.05,-\z);
\node at (-.2,-0.5,-\z) {$X_{i,\z}$};
\node[blue] at (0,-0.6,-\z) {$\bullet$};
}
\foreach \z in {3}
{
\drawWL{blue} {0,-.3,-\z}{0,-0.6,-\z};
\draw[blue] (-.05,-0.5+.05,-\z)--(0,-0.5,-\z)--(.05,-0.5+.05,-\z);
\node at (-.2,-0.5,-\z) {$X_{i,0}$};
\node[blue] at (0,-0.6,-\z) {$\bullet$};
}
\end{tikzpicture}
},\\
\theta_2(b_i)&=\theta_{\C}(b_i).\\
\end{align*}

\begin{definition}
For a configuration $a_{\vec{j}}\otimes b_{\vec{i}}$, we define
\begin{align*}
  \rho_1(a_{\vec{j}}\otimes b_{\vec{i}})&=(\rho_1(a_{0})\otimes \cdots \otimes \rho_1(a_{m-1})) \otimes (b_{2} \otimes \cdots \otimes b_{n} \otimes b_{1}), \\
  \theta_1(a_{\vec{j}}\otimes b_{\vec{i}})&=(\theta_1(a_{0})\otimes \cdots \otimes \theta_1(a_{m-1})) \otimes (\theta_1(b_{n}) \otimes \cdots \otimes \theta_{1}(b_{1})),\\
  \rho_2(a_{\vec{j}}\otimes b_{\vec{i}})&=(a_{m-1} \otimes a_{1} \otimes \cdots \otimes a_{m-2}) \otimes (\rho_2(b_{1}) \otimes \cdots \otimes \rho_2(b_{n}) ), \\
  \theta_2(a_{\vec{j}}\otimes b_{\vec{i}})&=(\theta_2(a_{m-1})\otimes \cdots \otimes \theta_2(a_{0})) \otimes (\theta_2(b_{1}) \otimes \cdots \otimes \theta_{2}(b_{n})).
\end{align*}
The actions on the configuration space are defined by their linear or anti-linear extensions.
\end{definition}

Note that this definition coincide with the geometric actions on the configurations in Fig.~\ref{Fig:configuration}. Therefore their relations also hold on the configuration space.
\begin{proposition}\label{Prop:Rho Theta commute}
On the configuration space, $\rho_1$ and $\theta_1$ commute with $\rho_2$ and $\theta_2$, and for $k=1,2$,
$\rho_k\theta_k=\theta_k\rho_k^{-1}$, $\rho_k^m=1$, $\theta_k^2=1$.
\end{proposition}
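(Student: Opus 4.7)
The plan is to identify each of the four operations $\rho_1,\theta_1,\rho_2,\theta_2$ with a bona fide rigid motion of the 3D picture of Fig.~\ref{Fig:configuration}---rotations about the $Y$- or $X$-axis, and reflections across planes normal to the $X$- or $Y$-axis---and then to read off all of the relations from the two dihedral subgroups of $O(3)$ that act in orthogonal planes.

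First I would go through the four definitions and check, case by case, that the induced action on $Z$-, $X$-, and $Y$-configurations really does coincide with the rigid motion it is meant to implement. For $\rho_1$ this amounts to checking that the strand-diagram in which one $X$-strand is looped over the others is isotopic to what one obtains by rigidly rotating the horizontal $X$-line by $2\pi/n$ about its midpoint; the cyclic shift on $(b_1,\ldots,b_n)$ and the relabelling $X_{i,j}\mapsto X_{i-1,j}$ of the $Z$-configuration then come for free. The argument for $\rho_2$ is identical in the $Y$-direction. For $\theta_1$ and $\theta_2$ the geometric reflection translates algebraically into the modular conjugation $\theta_{\C}$ acting strandwise, which is precisely the ``horizontal reflection on $\C$'' recalled in \S 2.1; via \eqref{Equ:theta=rho*} one can also rewrite the definitions in terms of $*$ and $\rho_\pi$ to handle the diagrammatic bookkeeping. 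Once each operation has been realised as a rigid motion, its effect on a general configuration $a_{\vec{j}}\otimes b_{\vec{i}}$ is simply the induced rigid motion on the whole picture.

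Granted the geometric interpretation, the proposition is immediate: each $\rho_k$ is a rotation whose order divides the number of lattice points along its axis, each $\theta_k$ is an involutive reflection, the relation $\rho_k\theta_k=\theta_k\rho_k^{-1}$ is the standard dihedral identity in the plane normal to the axis of $\rho_k$, and the subgroups $\langle\rho_1,\theta_1\rangle$ and $\langle\rho_2,\theta_2\rangle$ act in orthogonal planes, hence commute elementwise. The one point that is not purely geometric---and the main obstacle to be handled with care---is that $\theta_1$ and $\theta_2$ are antilinear because they invoke $\theta_{\C}$. One must verify that the two antilinearities cancel correctly in $\theta_k^2=1$ (reducing, via \eqref{Equ:theta=rho*}, to the standard facts $*^2=\mathrm{id}$ and $\rho_\pi^2=\mathrm{id}$ in a unitary MTC, together with the fact that $*$ and $\rho_\pi$ commute), and that they interact consistently with the linear $\rho_k$ on both sides of $\rho_k\theta_k=\theta_k\rho_k^{-1}$. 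This antilinear bookkeeping, together with a careful check that the looped-strand pictures for $\rho_1,\rho_2$ really are isotopic to rigid rotations, is where the genuine content of the proof lies.
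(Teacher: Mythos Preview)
Your proposal is correct and follows essentially the same approach as the paper: the paper simply observes (just before the proposition) that the definitions coincide with the geometric actions on the 3D configuration of Fig.~\ref{Fig:configuration}, and declares that therefore the dihedral relations hold on the configuration space. Your write-up is more careful than the paper's one-line justification---in particular your attention to the antilinearity of $\theta_1,\theta_2$ and how it cancels in $\theta_k^2=1$ is a point the paper leaves implicit---but the underlying argument is identical.
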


\subsection{Automorphisms on the dual pair of lattices}
Similarly we also define the four actions on the dual lattice $Lat'$ and the configuration space $Conf(Lat')$.

\begin{proposition}\label{Prop:LLrhotheta}
For $x\in Conf(Lat)$ and $x'\in Conf(Lat')$, we have that
\begin{align}
LL(x,x')&=LL(\rho_k(x),\rho_k(x')), \quad k=1,2 \label{Equ:LLrho}\\
\overline{LL(x,x')}&=LL(\theta_1(x),\rho_1\theta_1(x')), \label{Equ:LLtheta1}\\
\overline{LL(x,x')}&=LL(\rho_1\theta_2(x),\theta_2(x')). \label{Equ:LLtheta2}
\end{align}
\end{proposition}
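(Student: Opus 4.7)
The plan is to prove all three identities by interpreting $LL$ diagrammatically as the scalar value of the closed 3D picture in Equation~\eqref{Equ:bilinear form} (evaluated in the modular tensor category $\C$), and then tracking how the geometric symmetries of $Grid(n,m)$ act on this 3D diagram.

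First, for the rotation identity \eqref{Equ:LLrho}, I would observe that the actions $\rho_1,\rho_2$ were defined on the configuration spaces precisely so that $\rho_k(x)\otimes \rho_k(x')$ corresponds to the geometric rotation of the entire 3D picture (both the blue primary strands and the red dual strands together). Since the 3D picture lives on a torus and both lattices are rotated simultaneously, the resulting diagram is ambient isotopic to the original one in the braided category $\C$; in particular the scalar value in $\mathbb{C}$ is unchanged, so $LL(\rho_k(x),\rho_k(x'))=LL(x,x')$. Here I am using only that planar diagrams in a strict braided category are isotopy invariant and that the same scalar prefactors $\mu^{(1-n)(m-1)/2}\sqrt{d(X)d(X')}$ appear on both sides (which is immediate since $\rho_k$ permutes the simple objects labeling the $Z$-configuration and thus preserves the product of quantum dimensions).

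For the reflection identities \eqref{Equ:LLtheta1} and \eqref{Equ:LLtheta2}, I would use Equation \eqref{Equ:theta=rho*} together with the defining property of the modular conjugation $\theta_{\C}$: it is the anti-linear algebroid isomorphism that corresponds diagrammatically to a horizontal reflection. Applying $\theta_1$ to all $a_j$'s and $b_i$'s and then re-drawing the 3D picture amounts to reflecting the entire diagram about the $Y$-$Z$ plane; because $\theta_{\C}$ is anti-linear and preserves composition up to reversal, the value of the reflected diagram is the complex conjugate $\overline{LL(x,x')}$. The only subtlety is that the dual lattice $Lat'$ is offset from $Lat$ by $(+1/2,-1/2)$, so a naive reflection of both $x$ and $x'$ in the $X$-direction leaves the dual sublattice sitting at positions $\{n-1/2,n-3/2,\dots,1/2\}$ rather than at its canonical positions $\{3/2,5/2,\dots,n+1/2\}$; after the identification modulo $n$ one sees that this misalignment is exactly a shift by one unit in the $X$-direction, which is corrected by applying $\rho_1$ to $x'$. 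This accounts for the asymmetric $\rho_1$ in \eqref{Equ:LLtheta1}. Identity \eqref{Equ:LLtheta2} is analogous, with the $Y$-direction offset forcing the $\rho_1$ to act on $x$ rather than on $x'$ (the asymmetry between $X$- and $Y$-offsets, namely $+1/2$ versus $-1/2$, is what switches which side receives the correction).

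I expect the main obstacle to be a careful and pedantic verification that the reflected 3D diagram, after applying $\theta_{\C}$ strand-by-strand and shifting by $\rho_1$, reassembles into precisely the diagram defining $LL(\theta_1(x),\rho_1\theta_1(x'))$ (and similarly for $\theta_2$). The bookkeeping is nontrivial because one must simultaneously track (i) the reordering of the $Y$-configurations $b_i$ and the dualization of their boundary objects, (ii) the effect on the over/under crossings in the braided diagram (since a horizontal reflection of a braided tangle needs the anti-linearity of $\theta_{\C}$ to convert a crossing to its conjugate, not to its inverse), and (iii) the fact that the scalar prefactors involve $\sqrt{d(X)}=\sqrt{d(\overline{X})}$, so they are unaffected by $\theta_{\C}$. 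Once these compatibility checks are done, each of the three identities reduces to a single diagrammatic move whose validity follows from isotopy invariance on the torus together with anti-linearity of $\theta_{\C}$.
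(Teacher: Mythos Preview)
Your proposal is correct and follows essentially the same approach as the paper's own proof: reduce to basis configurations, interpret $LL$ as the value of the closed 3D diagram in \eqref{Equ:bilinear form}, use isotopy invariance for the rotation identity \eqref{Equ:LLrho}, and use the anti-linearity of the modular conjugation $\theta_{\C}$ (horizontal reflection) together with the half-integer offset of the dual lattice to obtain the conjugate identities \eqref{Equ:LLtheta1} and \eqref{Equ:LLtheta2}. The paper carries out the $\theta_1$ case by explicitly drawing the reflected diagram and observing the one-step $X$-shift of the red sublattice, then declares the $\theta_2$ case ``similar''---exactly the level of detail you provide.
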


\begin{proof}
It is enough to prove the case $x=a_{\vec{j}}\otimes b_{\vec{i}}$, $x'=a'_{\vec{j}}\otimes b'_{\vec{i}}$.

Recall that $LL$ is defined by a closed diagram in the 3D space as shown in Equation \eqref{Equ:bilinear form}. Applying the rotation on the 3D diagram, we obtain Equation \eqref{Equ:LLrho}.

If we consider the $3D$ diagram as an element in $\C$, then we have that
\begin{align*}
&\overline{LL(a_{\vec{j}}\otimes b_{\vec{i}}, a'_{\vec{j}}\otimes b'_{\vec{i}})} \\
=& \mu^{\frac{(1-n)(m-1)}{2}} \sqrt{d(X_{\vec{i},\vec{j}})d(X'_{\vec{i},\vec{j}})}\\
&\raisebox{0cm}{
\begin{tikzpicture}
\begin{scope}[xscale=-1.5,yscale=1.5]
\myGT{3}{1};
     \foreach \x in {1,2,3,4} {
\begin{scope}[shift={(.5,0,.5)}]
            \draw[red](\x,-0.3,0)--(\x,-0.3,-2);
            \node at (5-\x,-.5,-1) {$\theta_{\C}(b'_{\x})$};
\end{scope}
            \draw[blue](\x,-0.3,0)--(\x,-0.3,-2);
            \node at (5-\x,-.5,-1) {$\theta_{\C}(b_{\x})$};
               \foreach \z in {0,1,2} {
\begin{scope}[shift={(.5,0,.5)}]
                    \drawWL{red}{\x,0,-\z}{\x,-0.3,-\z};
\end{scope}
                    \drawWL{blue}{\x,0,-\z}{\x,-0.3,-\z};
                 }
                }
               \foreach \z in {0,1,2} {
\begin{scope}[shift={(.5,0,.5)}]
                \drawWL{red} {1,0,-\z}{4,0,-\z};
                \node at (2.5,.07,-\z) {$\theta_{\C}(a'_{\z})$};
\end{scope}
                \drawWL{blue} {1,0,-\z}{4,0,-\z};
                \node at (2.5,.05,-\z) {$\theta_{\C}(a_{\z})$};
                 }
   \foreach \z in {0,1,2} {
    \foreach \x in {1,2,3,4} {
%    \fill[red]  (\x,0,\z) circle (.2);
%    \node[red] at (\x,0,\z) {$\bullet$};
%    \node[red] at (\x,-0.6,\z) {$\bullet$};
\begin{scope}[shift={(.5,0,.5)}]
    \node[red] at (\x,0,-\z) {$\bullet$};
    \node[red] at (\x,-0.3,-\z) {$\bullet$};
\end{scope}
    \node[blue] at (\x,0,-\z) {$\bullet$};
    \node[blue] at (\x,-0.3,-\z) {$\bullet$};
%    \node at (\x-.2,-0.3,-\z) {$X_{\x,\z}$};
%\draw[blue] (\x-.1,-0.3+.1,-\z)--(\x,-0.3,-\z)--(\x+.1,-0.3+.1,-\z);
    }}
\end{scope}
\end{tikzpicture}
}\\
=& \mu^{\frac{(1-n)(m-1)}{2}} \sqrt{d(X_{\vec{i},\vec{j}})d(X'_{\vec{i},\vec{j}})}\\
&\raisebox{0cm}{
\begin{tikzpicture}
\begin{scope}[xscale=1.5,yscale=1.5]
\myGT{3}{1};
            \foreach \x in {1,2,3,4} {
\begin{scope}[shift={(.5,0,.5)}]
            \draw[red](\x-1,-0.3,0)--(\x-1,-0.3,-2);
            \node at (5-\x-1,-.5,-1) {$\theta_1(b'_{\x})$};
\end{scope}
            \draw[blue](\x,-0.3,0)--(\x,-0.3,-2);
            \node at (5-\x,-.5,-1) {$\theta_1(b_{\x})$};
               \foreach \z in {0,1,2} {
\begin{scope}[shift={(.5,0,.5)}]
                    \drawWL{red}{\x-1,0,-\z}{\x-1,-0.3,-\z};
\end{scope}
                    \drawWL{blue}{\x,0,-\z}{\x,-0.3,-\z};
                 }
                }
               \foreach \z in {0,1,2} {
\begin{scope}[shift={(.5,0,.5)}]
                \drawWL{red} {0,0,-\z}{3,0,-\z};
                \node at (2.5-1,.07,-\z) {$\theta_1(a'_{\z})$};
\end{scope}
                \drawWL{blue} {1,0,-\z}{4,0,-\z};
                \node at (2.5,.05,-\z) {$\theta_1(a_{\z})$};
                 }
   \foreach \z in {0,1,2} {
    \foreach \x in {1,2,3,4} {
%    \fill[red]  (\x,0,\z) circle (.2);
%    \node[red] at (\x,0,\z) {$\bullet$};
%    \node[red] at (\x,-0.6,\z) {$\bullet$};
\begin{scope}[shift={(.5,0,.5)}]
    \node[red] at (\x-1,0,-\z) {$\bullet$};
    \node[red] at (\x-1,-0.3,-\z) {$\bullet$};
\end{scope}
    \node[blue] at (\x,0,-\z) {$\bullet$};
    \node[blue] at (\x,-0.3,-\z) {$\bullet$};
%    \node at (\x-.2,-0.3,-\z) {$X_{\x,\z}$};
%\draw[blue] (\x-.1,-0.3+.1,-\z)--(\x,-0.3,-\z)--(\x+.1,-0.3+.1,-\z);
    }}
\end{scope}
\end{tikzpicture}
}\\
&=LL(\theta_1(x),\rho_1\theta_1(x'))
\end{align*}

The proof of Equation \eqref{Equ:LLtheta2} is similar.
\end{proof}

By definitions,  $\rho_k$, $\theta_k$, $k=1,2$, preserve $1_{n,m}$.
Take $x'=1_{n,m}$ in Proposition \ref{Prop:LLrhotheta}, we obtain that

\begin{proposition}\label{Prop:L*linear}
For any $x$ in $Conf(Lat)$, $k=1,2$,
\begin{align*}
  L(\rho_k(x))&=L(x),\\
  L(\theta_k(x))&=\overline{L(x)}.
\end{align*}
\end{proposition}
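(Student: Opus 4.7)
The plan is to specialize Proposition \ref{Prop:LLrhotheta} to $x' = 1_{n,m}$. The key preliminary observation is that the trivial configuration $1_{n,m}$ is a fixed point of all four automorphisms $\rho_k, \theta_k$ for $k=1,2$ (acting now on $Conf(Lat')$). This is essentially tautological: every $Z$-, $X$- and $Y$-component of $1_{n,m}$ is the unit $1$ (or the identity morphism), so the permutations of tensor factors do nothing, and the modular conjugation $\theta_{\C}$ acts trivially since $\overline{1}=1$. I would record this as a short observation before the computation.

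Given this, the $\rho_k$ statement is immediate: from \eqref{Equ:LLrho},
\begin{align*}
L(x) \;=\; LL(x, 1_{n,m}) \;=\; LL(\rho_k(x), \rho_k(1_{n,m})) \;=\; LL(\rho_k(x), 1_{n,m}) \;=\; L(\rho_k(x)).
\end{align*}
For the $\theta_1$ case, substituting $x' = 1_{n,m}$ into \eqref{Equ:LLtheta1} and using $\rho_1\theta_1(1_{n,m})=1_{n,m}$ yields
\begin{align*}
\overline{L(x)} \;=\; \overline{LL(x, 1_{n,m})} \;=\; LL(\theta_1(x), 1_{n,m}) \;=\; L(\theta_1(x)).
\end{align*}

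The $\theta_2$ case is the only one with a mild twist: \eqref{Equ:LLtheta2} gives
\begin{align*}
\overline{L(x)} \;=\; LL(\rho_1\theta_2(x), \theta_2(1_{n,m})) \;=\; LL(\rho_1\theta_2(x), 1_{n,m}) \;=\; L(\rho_1\theta_2(x)),
\end{align*}
after which I would invoke the already-established $\rho_1$-invariance of $L$ to collapse $L(\rho_1\theta_2(x))$ to $L(\theta_2(x))$. The argument has no real obstacle beyond bookkeeping; all the geometric content has been absorbed into Proposition \ref{Prop:LLrhotheta}, and the proof is a two-line specialization once fixedness of $1_{n,m}$ is noted.
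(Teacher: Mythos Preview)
Your proposal is correct and follows exactly the same approach as the paper: note that $1_{n,m}$ is fixed by $\rho_k,\theta_k$ and specialize Proposition~\ref{Prop:LLrhotheta} with $x'=1_{n,m}$. The paper's proof is a one-liner to this effect, and you have simply spelled out the bookkeeping (including the small extra step of applying $\rho_1$-invariance in the $\theta_2$ case) in more detail than the paper does.
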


\begin{proposition}\label{Prop:Rho Theta preserve}
The four actions $\rho_k$, $\theta_k$, $k=1,2$, preserve $\mu_{n,m}$.
\end{proposition}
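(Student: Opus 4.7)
The plan is to characterize $\mu_{n,m}$ by a Riesz-type property and then deduce invariance from Proposition \ref{Prop:L*linear}. Concretely, by the formula $\mu_{n,m}=\sum_{\alpha\in B}\overline{L(\alpha)}\alpha$ together with the fact that $B$ is an orthonormal basis, $\mu_{n,m}$ is the unique vector in $Conf(Lat)$ satisfying $\langle x,\mu_{n,m}\rangle=L(x)$ for every $x\in Conf(Lat)$ (using the convention that the inner product is conjugate-linear in the second slot). So it suffices to show that the vectors $\rho_k(\mu_{n,m})$ and $\theta_k(\mu_{n,m})$ realize the same linear functional $L$ via this pairing.

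For the linear case $k=1,2$ with $\rho_k$: since $\rho_k$ preserves the inner product on the configuration space (as it is induced by a geometric symmetry of the lattice), I would compute
\begin{equation*}
\langle x,\rho_k(\mu_{n,m})\rangle=\langle \rho_k^{-1}(x),\mu_{n,m}\rangle=L(\rho_k^{-1}(x))=L(x),
\end{equation*}
where the last equality uses Proposition \ref{Prop:L*linear}. Uniqueness of the Riesz representative then forces $\rho_k(\mu_{n,m})=\mu_{n,m}$. Equivalently, one can observe that $\rho_k(B)$ is again an orthonormal basis and invoke Proposition \ref{Prop:basis-independence}, using $\overline{L(\rho_k^{-1}(\beta))}=\overline{L(\beta)}$ to identify $\rho_k(\mu_{n,m})$ with $\sum_{\beta\in\rho_k(B)}\overline{L(\beta)}\beta=\mu_{n,m}$.

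For the anti-linear case, the only subtle point is that $\theta_k$ is an anti-linear isometry, so $\langle \theta_k(u),\theta_k(v)\rangle=\overline{\langle u,v\rangle}$ and $\theta_k^2=1$ by Proposition \ref{Prop:Rho Theta commute}. I would compute
\begin{equation*}
\langle x,\theta_k(\mu_{n,m})\rangle=\overline{\langle \theta_k(x),\mu_{n,m}\rangle}=\overline{L(\theta_k(x))}=\overline{\overline{L(x)}}=L(x),
\end{equation*}
again invoking Proposition \ref{Prop:L*linear} for the second equality. So $\theta_k(\mu_{n,m})=\mu_{n,m}$ by the same Riesz uniqueness. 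Equivalently, one can push $\theta_k$ through the defining sum: anti-linearity turns $\overline{L(\alpha)}$ into $L(\alpha)$, then the change of variables $\beta=\theta_k(\alpha)$ together with $L(\theta_k(\beta))=\overline{L(\beta)}$ gives $\sum_{\beta\in\theta_k(B)}\overline{L(\beta)}\beta$, which is $\mu_{n,m}$ by Proposition \ref{Prop:basis-independence}.

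There is no serious obstacle; the proposition is essentially a formal consequence of Proposition \ref{Prop:L*linear}. The only step that requires a bit of care is the bookkeeping with anti-linearity and complex conjugation for $\theta_k$, but this is handled uniformly by the Riesz characterization.
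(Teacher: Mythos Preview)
Your proof is correct and takes essentially the same approach as the paper: the paper simply cites Proposition~\ref{Prop:L*linear} together with the defining formula $\mu_{n,m}=\sum_{\alpha\in B}\overline{L(\alpha)}\alpha$, and your Riesz-representative argument is exactly the unpacking of that one-line deduction.
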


\begin{proof}
The statement follows from Proposition \ref{Prop:L*linear} and the definition of $\mu_{n,m}$ in Equation \eqref{Equ:mu L}.
\end{proof}

\section{Jones-Wassermann subfactors for MTC}\label{Sec:JW MTC}

\subsection{Identification}

Suppose $\mathscr{C}$ is a unitary modular tensor category.
Let $\mathscr{C}^m$ be the $mth$ tensor power of $\mathscr{C}$.
Let $Irr_m$ be the set of simple objects of $\mathscr{C}^m$. We can represent a simple object in $Irr_m$ as $\vec{X}:=X_0\otimes \cdots \otimes X_{m-1}$ for some simple objects $X_j$ in $\mathscr{C}$. Let $d(\vec{X})$ be the quantum dimension of the object $\vec{X}$. Then $\displaystyle d(\vec{X})=\prod_{j=0}^{m-1} d(X_j)$.

Take
\begin{align}
\gamma=\gamma_m&=\bigoplus_{\vec{X}} N_{\vec{X}} \vec{X},
\end{align}
where $N_{\vec{X}}=\dim \hom_\mathscr{C} (\vec{X},1)$.
Recall that $\mu$ is the global dimension of $\C$.

\begin{proposition}\label{Prop:dim gamma}
For $m\geq 1$, $d(\gamma)=\mu^{m-1}$.
\end{proposition}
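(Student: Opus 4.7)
The plan is to compute $d(\gamma_m)$ directly from the definition and then collapse the sum using Frobenius reciprocity together with the standard dimension identity for a fusion category. Expanding,
\[
 d(\gamma_m) \;=\; \sum_{X_0,\ldots,X_{m-1}\in Irr} \dim\hom_{\mathscr{C}}(X_0\otimes\cdots\otimes X_{m-1},1)\,\prod_{j=0}^{m-1} d(X_j),
\]
the key tool is the identity
\[
 \sum_{Y\in Irr} d(Y)\,\dim\hom_{\mathscr{C}}(W,Y) \;=\; d(W), \qquad W\in\mathscr{C},
\]
which is immediate from decomposing $W$ into simples.

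I would peel off the last tensor factor. Frobenius reciprocity rewrites
\[
\dim\hom(X_0\otimes\cdots\otimes X_{m-1},1)=\dim\hom(X_0\otimes\cdots\otimes X_{m-2},\overline{X_{m-1}}),
\]
and then summing $X_{m-1}$ against the weight $d(X_{m-1})=d(\overline{X_{m-1}})$, the reindexing $Y=\overline{X_{m-1}}$ together with the identity above collapses the inner sum to $d(X_0\otimes\cdots\otimes X_{m-2})=\prod_{j=0}^{m-2}d(X_j)$. Substituting back gives
\[
 d(\gamma_m) \;=\; \sum_{X_0,\ldots,X_{m-2}\in Irr}\,\prod_{j=0}^{m-2} d(X_j)^2 \;=\; \left(\sum_{X\in Irr} d(X)^2\right)^{m-1} \;=\; \mu^{m-1}.
\]

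Sanity checks are immediate: for $m=1$ only $X_0=1$ contributes so $\gamma_1=1$ and $d(\gamma_1)=1=\mu^0$; for $m=2$ one has $N_{X_0\otimes X_1}=\delta_{X_1,\overline{X_0}}$, hence $\gamma_2=\bigoplus_X X\otimes\overline{X}$ and $d(\gamma_2)=\mu$. There is no real obstacle; the only point requiring care is the convention that $\hom_{\mathscr{C}}(\vec X,1)$ refers to the fusion of the $X_j$'s inside $\mathscr{C}$ rather than inside $\mathscr{C}^m$, which is precisely what makes $\gamma_m$ a coend-type object and is consistent with its role as the boundary label of the configuration spaces introduced earlier.
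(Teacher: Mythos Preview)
Your proof is correct and follows essentially the same approach as the paper: both use Frobenius reciprocity to peel off the last tensor factor and then the identity $\sum_{Y\in Irr} d(Y)\dim\hom(W,Y)=d(W)$ to collapse the sum to $\sum_{\vec X\in Irr_{m-1}} d(\vec X)^2=\mu^{m-1}$. The paper's write-up is terser (it groups the first $m-1$ factors into a single $\vec X\in Irr_{m-1}$), but the argument is the same.
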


\begin{proof}
It is obvious for $m=1$. When $m\geq2$,
\begin{align*}
d(\gamma)&=\sum_{\vec{X}\in Irr_m} N_{\vec{X}} d(\vec{X})\\
&=  \sum_{\vec{X}\in Irr_{m-1}, Y\in Irr_{1}} \dim \hom_{\C}(\vec{X},Y)  d(\vec{X})d(Y)   &&\text{by Frobenius reciprocity}\\
&=\sum_{\vec{X}\in Irr_{m-1}}   d(\vec{X})^2\\
&=\mu^{m-1}.
\end{align*}
\end{proof}

\begin{notation}
Take $\delta=\mu^{\frac{m-1}{2}}$.
\end{notation}

For each $\vec{X}$, let $ONB^*_{\C}(\vec{X})$ be an ONB of $\hom_{\C}(\vec{X},1)$.
%We obtain the dual basis $ONB(\vec{X})=\{f^* | f\in ONB(\vec{X})\}$, which is an ONB of $\hom(\vec{X},1)$.
Then we can use the basis to represent the multiplicity of $\vec{X}$ in $\gamma$.
\begin{align}
\gamma&=\bigoplus_{\vec{X}\in Irr_m, b\in ONB^*_{\C}(\vec{X})} \vec{X}(b),
\end{align}
where $\vec{X}(b)=\vec{X}$.

The representation is covariant with respect to the choice of the ONB:
For an object $Y$ in $\C^m$ and a morphism $y \in  \hom_{\C^m} (Y, N_{\vec{X}} \vec{X})$, we take two ONB $B(1)$, $B(2)$ of $\hom_{\C}(\vec{X},1)$. Then we obtain two representations
  \begin{align*}
  y&=\bigoplus_{b_1\in B(1)} y(b_1), && y(b_1)\in \hom (Y,\vec{X}(b_1)), \\
  y&=\bigoplus_{b_2\in B(2)} y(b_2), && y(b_2)\in \hom (Y,\vec{X}(b_2)).
  \end{align*}
The representation is covariant means that
   $$y(b)=\sum_{b'} <b',b> y(b').$$

Note that
\begin{align*}
\hom_{\C^m}(1,\gamma^n)
%&=\bigoplus_{X_{\vec{i},\vec{j}}\in Irr^{nm}} \bigoplus_{b_i \in ONB^*_{\C}(X_{i,\vec{j}}), 1\leq i\leq n} \left( \hom_{\C^m}(1,\bigotimes_{i=1}^{n}  X_{i,\vec{j}}(b_i)) \right)\\
&=\bigoplus_{X_{\vec{i},\vec{j}}\in Irr^{nm}} \bigoplus_{b_i \in ONB^*_{\C}(X_{i,\vec{j}}), 1\leq i\leq n} \hom_{\C^m}(1,  X_{\vec{i},\vec{j}}(b_{\vec{i}})).
\end{align*}
We call it the \emph{$n$-box space} of $\gamma$.
For $a_{j} \in \hom_{\C}(1, X_{\vec{i},j})$, $0\leq j \leq m-1$, we have $a_{\vec{j}}(b_{\vec{i}}) \in \hom_{\C^m}(1,  X_{\vec{i},\vec{j}}(b_{\vec{i}}))$.

\begin{definition}
We define a map $\Phi:\hom_{\C^m}(1,\gamma^n) \to Conf(\C)_{n,m}$ as a linear extension of
\begin{align*}
  \Phi(a_{\vec{j}}(b_{\vec{i}}))=a_{\vec{j}}\otimes b_{\vec{i}}.
\end{align*}
\end{definition}
The definition is independent of the choice of the ONB $b_{\vec{i}}$, since the representation is covariant.
Moreover,
\begin{align*}
 \langle a_{\vec{j}}(b_{\vec{i}}),c_{\vec{j}}(d_{\vec{i}})\rangle =\langle a_{\vec{j}},c_{\vec{j}}\rangle \langle b_{\vec{i}},d_{\vec{i}}\rangle =\langle a_{\vec{j}}\otimes b_{\vec{i}},c_{\vec{j}}\otimes d_{\vec{i}}\rangle .
\end{align*}
So $\Phi$ is an isometry. Therefore we can identify the vectors in the two Hilbert spaces $\hom_{\C^m}(1,\gamma^n)$ and $Conf(\C)_{n,m}$.
We simply use the notation $\sum a_{\vec{j}}(b_{\vec{i}})$ to represent an element in $\hom_{\C^m}(1,\gamma^n)$.

\begin{definition}
Induced by the isometry $\Phi$, the four actions $\rho_k$, $\theta_k$, $k=1,2$, and the contractions $\wedge_k$, $k\geq 0$, are also defined on $\hom(1,\gamma^n)$, still denoted by  $\rho_k$, $\theta_k$.
\end{definition}

Recall that the multiplicity of $X_{-,\vec{j}}$ in $\gamma$ is represented by $b$ in $ONB^*(X_{-,\vec{j}})$.
We need an anti-isometric involution on $ONB^*(X_{-,\vec{j}})$ to specify the dual of $X_{-,\vec{j}}(b)$.
To be compatible with the geometric interpretation of the configuration in the 3D space, we define the dual by $\theta_1$:

\begin{definition}\label{Def:thetaX}
For an object $X_{-,\vec{j}}(b)$, we define its dual object as  $\overline{X_{-,\vec{j}}}(\theta_1(b))$.
\end{definition}
Note that $\theta_1^2(b)=b$, thus $\overline{\overline{X_{-,\vec{j}}(b)}}=X_{-,\vec{j}}(b)$.
By Frobenius reciprocity, the modular conjugation on $\C^m$ is given by $\theta_1$.

The element in $\hom_{\C^m}(1,\gamma^n)$ is usually represented by a diagram on the 2D plane.
To be compatible with the isometry $\Phi$, we simplify the 3D pictures for configurations by their projections on the plane $Y=0$ as follows:

The configuration in Fig.~\ref{Fig:configuration} is simplified as the following notation:
$$
\raisebox{0cm}{
\begin{tikzpicture}
\myGT{3}{1};
%\node at (-1,0,-1) {$\displaystyle L(a_{\vec{j}})=\prod_{1\leq i \leq n,0\leq j\leq m-1}\sqrt{d(X_{i,j})}$};
            \foreach \x in {1,2,3,4} {
            \draw[blue](\x,-0.3,0)--(\x,0,0);
%            \node at (\x-.2,-0.15,0) {$X_{\x,\vec{j}}$};
            \node at (\x,-.4,0) {$b_{\x}$};
                \node[blue] at (\x,-0.3,0) {$\bullet$};
                \draw[green] (\x-.1,-0.15,0)--++(.2,0,0);
                }
                \draw[blue] (1,0,0)--(4,0,0);
                \node at (2.5,.05,0) {$a_{\vec{j}}$};
\end{tikzpicture}
}.$$

\iffalse
When we consider it as a closed braided diagram in the 3D space, we use the following notation:
$$
\raisebox{0cm}{
\begin{tikzpicture}
\myGT{3}{1};
%\node at (-1,0,-1) {$\displaystyle L(a_{\vec{j}})=\prod_{1\leq i \leq n,0\leq j\leq m-1}\sqrt{d(X_{i,j})}$};
            \foreach \x in {1,2,3,4} {
            \draw[blue](\x,-0.3,0)--(\x,0,0);
%            \node at (\x-.2,-0.15,0) {$X_{\x,\vec{j}}$};
            \node at (\x,-.4,0) {$b_{\x}$};
                \node[blue] at (\x,-0.3,0) {$\bullet$};
%                \draw[green] (\x-.1,-0.15,0)--++(.2,0,0);
                }
                \draw[blue] (1,0,0)--(4,0,0);
                \node at (2.5,.05,0) {$a_{\vec{j}}$};
\end{tikzpicture}
}.$$
\fi

Induced by the isometry $\Phi$, $L$ becomes a linear functional on $\hom(1,\gamma^n)$,
\begin{align*}
L(a_{\vec{j}}(b_{\vec{i}})):=
L(a_{\vec{j}}\otimes b_{\vec{i}})=&\delta^{1-n} \sqrt{d(X_{\vec{i},\vec{j}})}
\raisebox{-1cm}{
\begin{tikzpicture}
\myGT{3}{1};
%\node at (-1,0,-1) {$\displaystyle L(a_{\vec{j}})=\prod_{1\leq i \leq n,0\leq j\leq m-1}\sqrt{d(X_{i,j})}$};
            \foreach \x in {1,2,3,4} {
            \draw[blue](\x,-0.3,0)--(\x,0,0);
%            \node at (\x-.2,-0.15,0) {$X_{\x,\vec{j}}$};
            \node at (\x,-.4,0) {$b_{\x}$};
                \node[blue] at (\x,-0.3,0) {$\bullet$};
%                \draw[green] (\x-.1,-0.15,0)--++(.2,0,0);
                }
                \draw[blue] (1,0,0)--(4,0,0);
                \node at (2.5,.05,0) {$a_{\vec{j}}$};
\end{tikzpicture}
},
\end{align*}
where we simplify the diagram in Equation \eqref{Equ:L} by its projection on the plane $Y=0$.

\subsection{Contractions}\label{Sec:Contraction}
The multiplication on $\C^m$ defines a map from $\hom(\gamma^{n},\gamma^{k}) \otimes \hom(\gamma^{k},\gamma^{l})$ to $\hom(\gamma^{n},\gamma^{l})$.
Applying Frobenius reciprocity, we obtain a contraction $\wedge_k: \hom(1,\gamma^{n+k}) \otimes \hom(1,\gamma^{k+l}) \to \hom(1,\gamma^{n+l})$.
Then  $\wedge_k$ is also defined on the configuration spaces induced by $\Phi$,. We give the definition in detail here.

\begin{remark}
The notation $\wedge_k$ comes from the graded multiplication in \cite{GJS}.
\end{remark}

Suppose $X_{\vec{i},\vec{j}}$, $Y_{\vec{i},\vec{j}}$, $Z_{\vec{i},\vec{j}}$ are $Z$-configurations of size $n\times m$, $\ell\times m$, $k\times m$.
For $X$-configurations $a_{\vec{j}} \in \hom_{\C}(1,X_{\vec{i},\vec{j}}\otimes Z_{\vec{i},\vec{j}})$
and $c_{\vec{j}} \hom(1,\theta_1(Z_{\vec{i},\vec{j}}) \otimes Y_{\vec{i},\vec{j}})$, we define the $k$-string contraction, $k\geq0$, as
\begin{align*}
a_{j}\wedge_k c_{j}:&=
\raisebox{-.3cm}{
\begin{tikzpicture}
\begin{scope}[xscale=.75]
\myGT{3}{1};
            \foreach \x in {1,2,3,6,7,8} {
            \draw[blue](\x,-0.4,0)--(\x,0,0);
                }
            \foreach \x in {4,5} {
            \draw[blue](\x,-0.2,0)--(\x,0,0);
                }
            \node at (1-.2,-0.2,0) {$X_{1,j}$};
            \node at (1.5-.2,-0.2,0) {$\cdots$};
            \node at (2-.2,-0.2,0) {$X_{n,j}$};
            \node at (3-.2,-0.2,0) {$Z_{1}$};
            \node at (3.5-.2,-0.2,0) {$\cdots$};
            \node at (4-.2,-0.1,0) {$Z_{k}$};
            \node at (5-.2,-0.1,0) {$\overline{Z_{k}}$};
            \node at (5.5-.2,-0.2,0) {$\cdots$};
            \node at (6-.2,-0.2,0) {$\overline{Z_{1}}$};
            \node at (7-.2,-0.2,0) {$Y_{1,j}$};
            \node at (7.5-.2,-0.2,0) {$\cdots$};
            \node at (8-.2,-0.2,0) {$Y_{\ell,j}$};
                \draw[blue] (1,0,0)--(4,0,0);
                \draw[blue] (5,0,0)--(8,0,0);
                \node at (2.5,.07,0) {$a_{j}$};
                \node at (6.5,.07,0) {$c_{j}$};
                \draw[blue] (3,-0.4,0)--++(3,0,0);
                \draw[blue] (4,-0.2,0)--++(1,0,0);
\end{scope}
\end{tikzpicture}}
\end{align*}

Moreover, $a_{\vec{j}}\wedge_k c_{\vec{j}}:=(a_{1}\wedge_k c_{1}) \otimes \cdots \otimes (a_{m}\wedge_k c_{m})$.
Suppose $b_{\vec{i}} \in \hom_{\C}(1,X_{\vec{i},\vec{j}}\otimes Z_{\vec{i},\vec{j}})$ and $d_{\vec{i}}\hom(1,\theta_1(Z_{\vec{i},\vec{j}}) \otimes Y_{\vec{i},\vec{j}})$ are $Y$-configurations.
We define the $k$-string contraction $\wedge_k$ on the configurations $a_{\vec{j}}\otimes b_{\vec{i}}$ and $c_{\vec{j}}\otimes d_{\vec{i}}$ as
\begin{align*}
(a_{\vec{j}}\otimes b_{\vec{i}}) \wedge_k (c_{\vec{j}}\otimes d_{\vec{i}})
&=
\prod_{s=1}^{k} \langle \theta_1(d_{k+1-s}), b_{n+s} \rangle
(a_{\vec{j}}\wedge_k c_{\vec{j}})\otimes (\bigotimes_{i=1}^{n} b_i \otimes  \bigotimes_{i=k+1}^{k+\ell}  d_{i}).
\end{align*}
We define $\wedge_k: Conf(\C)_{n+k,m} \otimes Conf(\C)_{k+\ell,m} \to Conf(\C)_{k+\ell,m}$.
by a linear extension on the configuration spaces.

Therefore we can identity $Conf(\C)_{k+\ell,m}$ as operators from $Conf(\C)_{k,m}$ to $Conf(\C)_{\ell,m}$ corresponding to Frobenius reciprocity.
Moreover, the composition of these operators is associative.

\begin{proposition}\label{Prop:Rho Theta Contraction}
Recall that $\rho_2$ and $\theta_2$ are actions in the $Y$-directions. They commute with the contraction $\wedge_k$ in the $X$-direction.
\end{proposition}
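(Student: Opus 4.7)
The plan is to reduce the identity to elementary tensors $x = a_{\vec{j}}\otimes b_{\vec{i}}$ and $y = c_{\vec{j}}\otimes d_{\vec{i}}$ and then unpack the definition of $\wedge_k$ into three independent ingredients: the componentwise contraction $(a_j\wedge_k c_j)_{j}$ on $X$-configurations, the scalar factor $\prod_{s=1}^k \langle\theta_1(d_{k+1-s}), b_{n+s}\rangle$ produced by pairing the middle $Y$-configurations, and the concatenation of the remaining $Y$-configurations. Because $\rho_2$ and $\theta_2$ act exclusively in the $Y$-direction while $\wedge_k$ is a gluing in the $X$-direction, these operations should commute ingredient by ingredient, and the bulk of the argument is geometric bookkeeping.

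For $\rho_2$, the action permutes the $j$-index cyclically while acting on each $b_i$ by a rotation in $\C^m$. Since $\wedge_k$ is purely componentwise in $j$, the cyclic shift commutes with the $X$-configuration contraction trivially; and since $\rho_2$ is an isometry on each hom space $\hom_{\C^m}(X_{i,\vec{j}},1)$ and commutes with $\theta_1$ by Proposition \ref{Prop:Rho Theta commute}, the scalar factor transforms as $\langle\theta_1(d_{k+1-s}), b_{n+s}\rangle = \langle\theta_1\rho_2(d_{k+1-s}), \rho_2(b_{n+s})\rangle$. The concatenation of the unpaired $b_i, d_i$ is immediate.

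For $\theta_2$, one must additionally track the reversal of the $j$-index and the anti-linearity. The identity reduces to the single-$j$ statement $\theta_2(a_j\wedge_k c_j) = \theta_2(a_j)\wedge_k \theta_2(c_j)$, which holds because the pictorial definition of $\wedge_k$ is manifestly invariant under the relevant reflection in the $Y$-direction of the 3D diagram, together with the scalar identity $\overline{\langle\theta_1(d), b\rangle} = \langle\theta_1\theta_2(d), \theta_2(b)\rangle$, which follows from $\theta_1\theta_2 = \theta_2\theta_1$ and the fact that $\theta_2$ is an anti-isometry. The complex conjugation introduced by the anti-linear $\theta_2$ on the left-hand side of $\theta_2(x\wedge_k y) = \theta_2(x)\wedge_k \theta_2(y)$ is thereby absorbed by the conjugation built into the inner-product coefficient on the right.

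The main obstacle is purely bookkeeping: keeping track of how the $j$-index permutations induced by $\rho_2$ and $\theta_2$ interact with the componentwise structure of $\wedge_k$, and verifying that the anti-linearity of $\theta_2$ is consistently absorbed by the conjugation in the inner-product factor. No new categorical input beyond Proposition \ref{Prop:Rho Theta commute} is needed; the proposition is essentially a formal consequence of the orthogonality of the $X$- and $Y$-directions in the 3D pictorial calculus.
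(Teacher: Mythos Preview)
Your proposal is correct and follows essentially the same approach as the paper: both reduce to elementary tensors, decompose $\wedge_k$ into the componentwise $X$-contraction, the inner-product scalar, and the concatenated $Y$-configurations, and then invoke Proposition~\ref{Prop:Rho Theta commute} together with the (anti-)isometry property of $\rho_2$ and $\theta_2$ to handle each piece. The paper only writes out the $\rho_2$ case explicitly and says ``the proof for $\theta_2$ is similar,'' whereas you spell out the $\theta_2$ bookkeeping (reversal of the $j$-index and absorption of the conjugation via anti-linearity) in more detail, but the underlying argument is the same.
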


\begin{proof}
Recall that $\rho_2$ is an isometry and it commutes with $\theta_1$ by Proposition \ref{Prop:Rho Theta commute}, so
\begin{align*}
&\rho_2(a_{\vec{j}}\otimes b_{\vec{i}}) \wedge_k \rho_2(c_{\vec{j}}\otimes d_{\vec{i}})\\
=&
\prod_{s=1}^{k} \langle \theta_1\rho_2(d_{k+1-s}),\rho_2(b_{n+s}) \rangle
(\rho_2(a_{\vec{j}}) \wedge_k \rho_2(c_{\vec{j}}))\otimes (\bigotimes_{i=1}^{n} \rho_2(b_i) \otimes  \bigotimes_{i=k+1}^{k+\ell}  \rho_2(d_{i}))\\
=&
\prod_{s=1}^{k} \langle \theta_1(d_{k+1-s}),(b_{n+s}) \rangle
\rho_2(a_{\vec{j}} \wedge_k c_{\vec{j}})\otimes (\bigotimes_{i=1}^{n} \rho_2(b_i) \otimes  \bigotimes_{i=k+1}^{k+\ell}  \rho_2(d_{i}))\\
=&\rho_2((a_{\vec{j}}\otimes b_{\vec{i}}) \wedge_k (c_{\vec{j}}\otimes d_{\vec{i}})).
\end{align*}

The proof for $\theta_2$ is similar.
\end{proof}

\begin{lemma}\label{Lem:contraction rule}
When $k=1$, we have
%For $b_{i}\in ONB(X_{i,\vec{j}})$, $1\leq i\leq n$ and $d_{k}\in ONB(Y_{k,\vec{j}})$, $1\leq k\leq \ell$, we have

\begin{align*}
&
\raisebox{-.3cm}{
\begin{tikzpicture}
\myGT{3}{1};
            \foreach \x in {1,2,5,6} {
            \draw[blue](\x,-.3,0)--(\x,0,0);
%            \node at (\x,-.4,0) {$b_{\x}$};
%                \node[blue] at (\x,-.3,0) {$\bullet$};
                }
                \foreach \x in {1,2,5,6} {
            \node[blue] at (\x,-.3,0) {$\bullet$};
                }
                \node at (1,-.4,0) {$b_{1}$};
                \node at (2,-.4,0) {$b_{n}$};
                \node at (5,-.4,0) {$d_{1+1}$};
                \node at (6,-.4,0) {$d_{1+\ell}$};
%            \node at (1-.2,-0.2,0) {$X_{1,\vec{j}}$};
            \node at (1.5-.2,-0.2,0) {$\cdots$};
%            \node at (2-.2,-0.2,0) {$X_{n,\vec{j}}$};
%            \node at (3-.2,-0.2,0) {$Z_{\vec{j}}$};
%            \node at (4-.2,-0.2,0) {$\overline{Z_{\vec{j}}}$};
%            \node at (5-.2,-0.2,0) {$Y_{1,\vec{j}}$};
            \node at (5.5-.2,-0.2,0) {$\cdots$};
%            \node at (6-.2,-0.2,0) {$Y_{\ell,\vec{j}}$};
                \draw[blue] (1,0,0)--(6,0,0);
                \node at (2,.07,0) {$a_{\vec{j}}$};
                \node at (5,.07,0) {$c_{\vec{j}}$};
\end{tikzpicture}
}\\
=\sum_{b\in ONB(Z_{\vec{j}})}&
\raisebox{-1.cm}{
\begin{tikzpicture}
\myGT{3}{1};
            \foreach \x in {1,2,3,4,5,6} {
            \draw[blue](\x,-.3,0)--(\x,0,0);
%            \node at (\x,-.4,0) {$b_{\x}$};
%                \node[blue] at (\x,-.3,0) {$\bullet$};
                }
                \foreach \x in {1,2,3,4,5,6} {
            \node[blue] at (\x,-.3,0) {$\bullet$};
                }
                \node at (1,-.4,0) {$b_{1}$};
                \node at (2,-.4,0) {$b_{n}$};
                \node at (3,-.4,0) {$b$};
                \node at (4,-.4,0) {$\theta_1(b)$};
                \node at (5,-.4,0) {$d_{1+1}$};
                \node at (6,-.4,0) {$d_{1+\ell}$};
%            \node at (1-.2,-0.2,0) {$X_{1,\vec{j}}$};
            \node at (1.5-.2,-0.2,0) {$\cdots$};
%            \node at (2-.2,-0.2,0) {$X_{n,\vec{j}}$};
%            \node at (3-.2,-0.2,0) {$Z_{\vec{j}}$};
%            \node at (4-.2,-0.2,0) {$\overline{Z_{\vec{j}}}$};
%            \node at (5-.2,-0.2,0) {$Y_{1,\vec{j}}$};
            \node at (5.5-.2,-0.2,0) {$\cdots$};
%            \node at (6-.2,-0.2,0) {$Y_{\ell,\vec{j}}$};
                \draw[blue] (1,0,0)--(3,0,0);
                \draw[blue] (4,0,0)--(6,0,0);
                \node at (2,.07,0) {$a_{\vec{j}}$};
                \node at (5,.07,0) {$c_{\vec{j}}$};
\end{tikzpicture}
}.
\end{align*}
\end{lemma}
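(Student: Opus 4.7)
\medskip

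\noindent\textbf{Plan of proof.}
The statement is a resolution-of-identity relation, comparing two diagrammatic descriptions of the same contraction. The basic idea is that the cap between $a_{\vec j}$ and $c_{\vec j}$ produced by $\wedge_1$ is precisely the evaluation morphism $\mathrm{ev}_{Z_{\vec j}}\colon Z_{\vec j}\otimes \overline{Z_{\vec j}}\to 1$ in $\mathscr{C}^m$, and this evaluation decomposes as a sum over an ONB of $\hom_{\mathscr{C}^m}(Z_{\vec j},1)$ in the form
\begin{equation*}
\mathrm{ev}_{Z_{\vec j}} \;=\; \sum_{b\in ONB(Z_{\vec j})} b\otimes \theta_1(b).
\end{equation*}
This is essentially the resolution of identity \eqref{Equ:resolution of identity} applied on the pair of dual objects $Z_{\vec j}$ and $\overline{Z_{\vec j}}$, combined with the description of the modular conjugation: since $\theta_1\colon\hom(Z_{\vec j},1)\to\hom(\overline{Z_{\vec j}},1)$ is an anti-linear isometry, $\{\theta_1(b)\}$ is an ONB of $\hom(\overline{Z_{\vec j}},1)$ dual to $\{b\}$ in exactly the way needed for the evaluation pairing.

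\smallskip

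My plan is then the following. First, I would unpack the LHS using the definition of $\wedge_1$ from \S\ref{Sec:Contraction}: the cap between $a_{\vec j}$ and $c_{\vec j}$ over the single $Z$-string is the evaluation $\mathrm{ev}_{Z_{\vec j}}$, and the $Y$-configurations that survive are $b_1,\dots,b_n,d_2,\dots,d_{1+\ell}$. Second, I would substitute the above decomposition of $\mathrm{ev}_{Z_{\vec j}}$ into the cap; this converts the single capped $X$-configuration $a_{\vec j}\wedge_1 c_{\vec j}$ into the sum over $b$ of two separate $X$-configurations $a_{\vec j}$ and $c_{\vec j}$ with the extra $Y$-configurations $b$ and $\theta_1(b)$ inserted at the two new positions, which is exactly the RHS. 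Third, I would check that the remaining data (the $X$-configurations $a_{\vec j},c_{\vec j}$, the $Y$-configurations $b_i$ and $d_i$, and the $Z$-labels) match between the two sides, which is immediate from the diagrammatic presentation.

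\smallskip

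The main obstacle I expect is the bookkeeping of normalization. Equation \eqref{Equ:resolution of identity} carries a quantum dimension factor $d(Z)$, and the identity $\mathrm{ev}_{Z_{\vec j}}=\sum_b b\otimes\theta_1(b)$ requires that this factor be absorbed correctly into the Hilbert-space inner product on the morphism spaces $\hom_{\mathscr{C}^m}(Z_{\vec j},1)$ used to define $ONB(Z_{\vec j})$. I would verify this by decomposing $Z_{\vec j}$ into its simple summands in $\mathscr{C}^m$, checking the identity on each summand (where it reduces to the defining property of the duality pairing and the standard normalization of $\theta_1$ from $\theta_{\C}=\rho_\pi\circ*$), and then summing. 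Once this normalization is reconciled with the paper's conventions, the equality of the two sides is a direct substitution.
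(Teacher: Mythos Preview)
There is a genuine gap. The identity you rely on,
\[
\mathrm{ev}_{Z_{\vec j}}=\sum_{b}b\otimes\theta_1(b),
\]
is \emph{not} true as an equality of morphisms $Z_{\vec j}\otimes\overline{Z_{\vec j}}\to 1$. The space $\hom_{\C}(Z_{\vec j}\otimes\overline{Z_{\vec j}},1)$ decomposes over the simple subobjects $W$ of $Z_{\vec j}$ in $\C$, and your right-hand side lives entirely in the $W=1$ piece, while the componentwise cap does not. Concretely, take $m=2$ with $Z_0$ a nontrivial simple and $Z_1=\overline{Z_0}$: then $\hom_{\C}(Z_0\otimes\overline{Z_0},1)$ is one-dimensional, so your sum has a single term pairing the first two tensor factors together and the last two together, whereas the cap coming from $\wedge_1$ pairs the first with the third and the second with the fourth. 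These are distinct elements of a hom space of dimension at least $2$. So the substitution you propose in your second step is not a valid identity of morphisms, and the obstacle is structural, not a matter of normalization.

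The paper's argument differs precisely here. It inserts the resolution of identity~\eqref{Equ:resolution of identity} on $1_{Z_{\vec j}}$ alone, at a point on the $Z$-strands between the left half of the diagram and the cap. The left half---built from the $a_j$ and the $b_i$, with only the $Z_j$-legs open---is a morphism from the tensor unit in $\C$; this is what ``no boundary on the left'' means. Composing it with the projection onto any simple summand $W\ne 1$ of $Z_{\vec j}$ therefore lands in $\hom_{\C}(1,W)=0$, so only the $W=1$ channel survives. Only after this step does the sum over an ONB of $\hom_{\C}(Z_{\vec j},1)$ appear; absorbing the cap into $b^*$ via the diagrammatic definition of $\theta_1$ then yields the right-hand side. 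In short, the vanishing of the nontrivial channels is forced by the surrounding diagram, not by any decomposition of the cap itself; your plan would go through if you inserted the resolution of identity on $1_{Z_{\vec j}}$ rather than attempting to decompose the evaluation.
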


\begin{proof}
We apply Equation \eqref{Equ:resolution of identity}, the resolution of identity in $\C$, to $1_{Z_{\vec{j}}}$ in the first diagram. Only the component equivalent to 1 remains non-zero, since the first diagram has no boundary on the left. This component gives the second diagram.
\end{proof}

\subsection{Frobenius algebras}

\begin{definition}
We define $\mu_n=\Phi^{-1}(\mu_{n,m})$, for $n\geq 1$, and $\mu_0=1$.
Then
\begin{align*}
  \mu_{n}=\sum_{\alpha \in B } \overline{L(\alpha)} \alpha,
\end{align*}
where $B$ be is an ONB of $\hom(1,\gamma^n)$.
\end{definition}
Moreover,
$\mu_1$ is the canonical inclusion from $1$ to $\gamma$, and
$\mu_2$ is the canonical inclusion from $1$ to $\gamma\otimes \overline{\gamma}$ which defines the dual of objects.
By Proposition \ref{Prop:Rho Theta preserve}, for $k=1,2$,
\begin{align}
\rho_k(\mu_n)&=\mu_n, \label{Equ:rho-mu}\\
\theta_k(\mu_n)&=\mu_n. \label{Equ:theta-mu}
\end{align}
Let us prove that $\mu_3$ defines a Frobenius algebra.

\begin{lemma}\label{Lem:mu composition}
%The one-string contraction of $\mu_{n}$ and $\mu_{\ell}$ is $\mu_{n+\ell-2}$,
For $n,\ell\geq1$,
\begin{align*}
\mu_{n}\wedge_1\mu_{\ell}=\delta^{-1} \mu_{n+\ell-2}.
\end{align*}
\end{lemma}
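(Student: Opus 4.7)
The cleanest route is to reduce the identity to a pairing check. Directly from the definition $\mu_p=\sum_{\alpha}\overline{L(\alpha)}\alpha$ over any ONB, one has the characterizing property
\[
\langle z,\mu_p\rangle = L(z) \quad \text{for every } z\in\hom(1,\gamma^p).
\]
Since a vector in the Hilbert space $\hom(1,\gamma^{n+\ell-2})$ is determined by its inner products, proving $\mu_n\wedge_1\mu_\ell=\delta^{-1}\mu_{n+\ell-2}$ is equivalent to establishing
\[
\langle z,\mu_n\wedge_1\mu_\ell\rangle=\delta^{-1}L(z)
\]
for every $z\in\hom(1,\gamma^{n+\ell-2})$.

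First I would expand $\mu_n=\sum_\alpha\overline{L(\alpha)}\alpha$ and $\mu_\ell=\sum_\beta\overline{L(\beta)}\beta$ along configuration ONBs, so the target inner product becomes the double sum $\sum_{\alpha,\beta}L(\alpha)L(\beta)\langle z,\alpha\wedge_1\beta\rangle$. Then I would use Lemma~\ref{Lem:contraction rule} to unfold $\alpha\wedge_1\beta$: the $1$-string contraction inserts a pair $(b,\theta_1(b))$ along the glued strand, with $b$ running over $ONB(Z_{\vec j})$ and $Z_{\vec j}$ running over simple objects. Graphically this turns $\langle z,\alpha\wedge_1\beta\rangle$ into a closed diagram built from $z$ with a $(b,\theta_1(b))$-bubble sewn in between the first $n-1$ and last $\ell-1$ strands.

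Next I would rearrange the resulting sum so that the weighted inner summation $\mu^{1-n}\sum_{Z_{\vec j}\in Irr}d(Z_{\vec j})\sum_{b\in ONB(Z_{\vec j})}(\cdots)$ falls exactly under the hypothesis of Proposition~\ref{Prop:0 map'} (the killing relation). Applying that proposition collapses the inserted bubble to the trivial configuration $1_{\tilde X}$, so the remaining closed diagram is precisely the one that defines $L(z)$. What is left is purely a normalization count: the factors $\delta^{1-n},\delta^{1-\ell},\delta^{1-(n+\ell-2)}$ coming from the three appearances of $L$, together with the $\mu^{1-n}$ absorbed by Proposition~\ref{Prop:0 map'} and the $\sqrt{d(Z_{\vec j})}$ normalizations implicit in $\Phi$, should combine to $\delta^{-1}=\mu^{-(m-1)/2}$.

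The main obstacle is exactly this bookkeeping: aligning the $\mu^{(1-p)(m-1)/2}$ prefactors inside $L$ for $p=n,\ell,n+\ell-2$ with the weight $\mu^{1-n}$ demanded by the killing relation and the quantum-dimension normalizations in the contraction. Everything above the normalization is essentially graphical, so the content of the lemma is that the expected Frobenius-algebra constant $\delta^{-1}$ is forced by consistency; I do not expect any conceptual surprises beyond this accounting.
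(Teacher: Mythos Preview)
Your pairing strategy---reducing to $\langle z,\mu_n\wedge_1\mu_\ell\rangle=\delta^{-1}L(z)$---is sound and is essentially the dual formulation of what the paper does. The real divergence is in the tool you reach for next.

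You propose to unfold $\alpha\wedge_1\beta$ via Lemma~\ref{Lem:contraction rule} and then invoke Proposition~\ref{Prop:0 map'} (the killing relation) to collapse the inserted bubble. Two problems with this. First, Lemma~\ref{Lem:contraction rule} is a statement about \emph{closed diagrams}: it tells you that the scalar $L$ of a contracted configuration splits as a sum over $b\in ONB^*(Z_{\vec j})$ of products of $L$'s on the pieces. It does not act on the vector $\alpha\wedge_1\beta$ itself, nor on the bare inner product $\langle z,\alpha\wedge_1\beta\rangle$, so your ``closed diagram built from $z$ with a $(b,\theta_1(b))$-bubble'' is not what Lemma~\ref{Lem:contraction rule} produces at that stage. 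Second, Proposition~\ref{Prop:0 map'} is simply not needed here, and bringing it in is logically delicate: in the paper's bi-induction, Proposition~\ref{Prop:0 map'} is deduced from the self-duality Theorem~\ref{Thm:MN Self-duality} for $n=2$, while the present lemma is a prerequisite for the Frobenius-algebra/planar-algebra structure used in that induction. Routing through the killing relation risks circularity and in any case imports modularity into a statement that does not require it.

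The paper's argument is shorter and entirely elementary. It fixes the ONB of $\hom(1,\gamma^{n+\ell-2})$ to consist of contracted configurations $\sqrt{d(Z_{\vec j})}\,(a_{\vec j}\wedge_1 c_{\vec j})(b_{\vec i}\otimes d_{\vec i})$, where $a_j,c_j$ run over ONBs of the $X$-configuration spaces for each intermediate $Z_{\vec j}$, and $b_i,d_i$ over $Y$-configuration ONBs. Lemma~\ref{Lem:contraction rule} applied to the closed diagram defining $L$ on such a basis vector gives the factorization
\[
d(Z_{\vec j})\,\overline{L\bigl((a_{\vec j}\wedge_1 c_{\vec j})(b_{\vec i}\otimes d_{\vec i})\bigr)}
=\delta\sum_{b\in ONB^*(Z_{\vec j})}\overline{L\bigl(a_{\vec j}(b_{\vec i}\otimes b)\bigr)}\;\overline{L\bigl(c_{\vec j}(\theta_1(b)\otimes d_{\vec i})\bigr)},
\]
and the inner-product factor $\langle\theta_1(d_1),b_n\rangle$ in the definition of $\wedge_1$ collapses the double sum over $b$ back to a single one. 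Summing over the full ONB then yields $\mu_{n+\ell-2}=\delta\,\mu_n\wedge_1\mu_\ell$ directly. All the normalization bookkeeping you worry about is absorbed into this single identity; no killing relation enters.
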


\begin{proof}
Suppose $X_{\vec{i},\vec{j}}$, $Y_{\vec{i},\vec{j}}$, $Z_{\vec{j}}$ are $Z$-configurations of size $n\times m$, $\ell\times m$, $1\times m$.
Note that
$$\{\sqrt{d(Z_j)} a_{j}\wedge_1 c_{j} ~|~ a_j \in ONB(X_{\vec{i},j}\otimes Z_{j}), ~ c_j\in ONB(\overline{Z_{j}}\otimes Y_{\vec{i},j}) \} $$
defines an $ONB(X_{\vec{i},j}\otimes Y_{\vec{i},j})$.
Take $b_{\vec{i}}=b_{1} \otimes \cdots \otimes b_{n}$ and
$d_{\vec{i}}=d_{1} \otimes \cdots \otimes d_{\ell}$, where $b_{i}\in ONB(X_{i,\vec{j}})$ and $d_{i}\in ONB(Y_{i,\vec{j}})$. By Lemma \ref{Lem:contraction rule}, we have
\begin{align*}
&\overline{L(\sqrt{d(Z_j)} a_{\vec{j}}\wedge_1 c_{\vec{j}}(b_{\vec{i}}\otimes d_{\vec{i}}) )} \sqrt{d(Z_j)} a_{\vec{j}}\wedge_1 c_{\vec{j}}(b_{\vec{i}}\otimes d_{\vec{i}})\\
=&\delta \sum_{b\in ONB^*(Z_{\vec{\vec{j}}})} \overline{L( a_{\vec{j}}(b_{\vec{i}}\otimes b) ) L(c_{\vec{j}} (\theta_1(b) \otimes d_{\vec{i}}) )}  a_{\vec{j}}\wedge_1 c_{\vec{j}}(b_{\vec{i}}\otimes d_{\vec{i}})  \\
=&\delta \left(\sum_{b\in ONB^*(Z_{\vec{\vec{j}}})} \overline{L( a_{\vec{j}}(b_{\vec{i}}\otimes b))}   a_{\vec{j}}(b_{\vec{i}}\otimes b) \right) \wedge_1 \left( \sum_{b\in ONB^*(Z_{\vec{\vec{j}}})} \overline{L(c_{\vec{j}} (\theta_1(b) \otimes d_{\vec{i}}) )} c_{\vec{j}}(\theta_1(b)\otimes d_{\vec{i}}) \right) .
\end{align*}
Sum over $a_{\vec{j}}(b_{\vec{i}})$, $c_{\vec{j}}(d_{\vec{i}})$, we have
\begin{align*}
\mu_{n+\ell-2}=\delta \mu_{n}\wedge_1\mu_{\ell}.
\end{align*}

\end{proof}

\begin{theorem}\label{Thm:JW subfactor}
By Frobenius reciprocity, we can identify $\mu_3$ as a morphism in $\hom(\gamma,\gamma^2)$ or $\hom(\gamma^2,\gamma)$. Then $(\gamma,\mu_{3})$ is a Frobenius algebra in $\C^m$.
\end{theorem}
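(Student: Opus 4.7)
The plan is to identify each piece of the Frobenius structure as an avatar of $\mu_3$ (resp.\ $\mu_1$) under Frobenius reciprocity, and then reduce every axiom to the composition formula $\mu_n\wedge_1\mu_\ell=\delta^{-1}\mu_{n+\ell-2}$ from Lemma \ref{Lem:mu composition}, using the rotation/reflection invariances \eqref{Equ:rho-mu} and \eqref{Equ:theta-mu} to identify differently-oriented readings of the same diagram. Specifically, I set the multiplication $m\in\hom(\gamma^2,\gamma)$ and the comultiplication $\Delta\in\hom(\gamma,\gamma^2)$ both equal to $\mu_3\in\hom(1,\gamma^3)$ via the standard identifications, the unit $\eta\in\hom(1,\gamma)$ equal to $\mu_1$, and the counit $\epsilon\in\hom(\gamma,1)$ equal to $\mu_1^*$.

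For associativity, I observe that the two iterated products $m\circ(m\otimes 1_\gamma)$ and $m\circ(1_\gamma\otimes m)$ correspond under Frobenius reciprocity to the two contractions $\mu_3\wedge_1\mu_3$ performed at the two adjacent internal strings. By Lemma \ref{Lem:mu composition} both equal $\delta^{-1}\mu_4$, and by $\rho_1(\mu_4)=\mu_4$ these two readings of $\mu_4$ agree as elements of $\hom(1,\gamma^4)$. Coassociativity follows by the same argument after applying the vertical reflection, using $\theta_k(\mu_n)=\mu_n$ to match the comultiplications with the multiplications. The Frobenius condition $(m\otimes 1_\gamma)\circ(1_\gamma\otimes\Delta)=\Delta\circ m=(1_\gamma\otimes m)\circ(\Delta\otimes 1_\gamma)$ is proved identically: all three sides again become $\mu_3\wedge_1\mu_3=\delta^{-1}\mu_4$, and the rotational symmetry $\rho_1(\mu_4)=\mu_4$ identifies them.

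For the unit axioms $m\circ(\eta\otimes 1_\gamma)=1_\gamma=m\circ(1_\gamma\otimes\eta)$, the lemma gives $\mu_1\wedge_1\mu_3=\delta^{-1}\mu_2$, and since $\mu_2$ is the canonical inclusion $1\to\gamma\otimes\overline{\gamma}$ it corresponds to $1_\gamma$ up to the normalization built into $\delta$; matching scalars (or equivalently, computing $\langle\mu_2,\mu_2\rangle=d(\gamma)=\delta^2$) yields the required identity. The counit axioms then follow by applying the adjoint $*$ or equivalently the reflection $\theta_2$, using $\theta_k(\mu_n)=\mu_n$.

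The main obstacle is purely bookkeeping: carefully matching the various Frobenius-reciprocity identifications of $\mu_3\in\hom(1,\gamma^3)$ with its incarnations as $m$, as $\Delta$, and as the several reflected/rotated versions needed on each side of the identities, so that the diagrammatic contractions appearing in Lemma \ref{Lem:mu composition} align on the nose. Once this is set up, the symmetries \eqref{Equ:rho-mu} and \eqref{Equ:theta-mu} do all the heavy lifting, and no further categorical computation beyond Lemma \ref{Lem:mu composition} is required.
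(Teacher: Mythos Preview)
Your proposal is correct and takes essentially the same approach as the paper: the paper's own proof is the single sentence ``It follows from Equations \eqref{Equ:rho-mu}, \eqref{Equ:theta-mu} and Lemma \ref{Lem:mu composition},'' and you have simply unpacked how each Frobenius axiom reduces to $\mu_n\wedge_1\mu_\ell=\delta^{-1}\mu_{n+\ell-2}$ together with the rotational and reflective invariance of $\mu_n$. Your acknowledgment that the only real work is the bookkeeping of Frobenius-reciprocity identifications (and the attendant normalizations for the unit axiom) is exactly right and is precisely what the paper leaves implicit.
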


\begin{proof}
It follows from Equations \eqref{Equ:rho-mu}, \eqref{Equ:theta-mu} and Lemma \ref{Lem:mu composition}.
\end{proof}

We call the subfactor associated with the Frobenius algebra $(\gamma,\mu_{3})$ the \emph{$m$-interval Jones-Wassermann subfactor}.
The modularity is not used in the construction of the Jones-Wassermann subfactors, but it is crucial in the proof of the self-duality of the Jones-Wassermann subfactor.
The formula of $(\gamma,\mu_3)$ in terms of the 3D configuration is intuitive in the proof of self-duality.

We can also derive this Frobenius algebra through the tensor functor $Fun$ from $\C^m$ to $\C$.
(1) The adjoint functor of $Fun$ sends the trivial Frobenius algebra in $\C$ to a Frobenius algebra in $\C^m$ which is our $(\gamma,\mu_3)$.
(2) The functor $Fun$ defines an inclusion from $\hom_{\C^m}(\tilde{X}^{mn})$ to $\hom_{\C}(\tilde{X}^{mn})$. The inductive limit of this inclusion for $n\to \infty$ defines a subfactor, which was studied by Erlijman and Wenzl in \cite{ErlWen07}. The corresponding Frobenius algebra is $(\gamma,\mu_3)$.
%(3) Let $\gamma_2$ be the Frobenius algebra of $\C^2$ which defines the quantum double. The $m^{\rm th}$ tensor power of $\gamma^2$ defines a Frobenius algebra in the $m^{\rm th}$ tensor power of $\C^2$. The tensor functor $Fun$ map the  $m^{\rm th}$ tensor power of the first $\C$ in $\C^2$ to $\C$. It sends  the $m^{\rm th}$ tensor power of $\gamma^2$ to our $\gamma_m$.

The Frobenius algebra $(\gamma,\mu_3)$ defines a $\gamma-\gamma$ bimodule category induced by $\C^m$. It is a unitary fusion category, called the dual of $\C^m$ with respect to $(\gamma,\mu_3$.
When $m=2$, the dual of $\C^2$ is known as the quantum double of $\C$.
For a general $m$, we call the dual of $\C^m$ with respect to the Frobenius algebra $(\gamma,\mu_3)$ the {\it quantum $m$-party, or quantum multiparty,} of $\C$.

\section{The string Fourier transform on planar algebras}\label{Sec:PA of QM}

Once we obtain a Frobenius algebra $(\gamma,\mu_3)$, we can define a subfactor planar algebra $\SA=\{\SA_{n,\pm}\}_{n\in \mathbb{N}}$, such that $\SA_{n,+}=\hom(1,\gamma^n)$.
This is a part of Theorem \ref{Thm:MN Self-duality}. We show that the planar algebra is unshaded by constructing a planar algebraic *-isomorphism from $\SA_{n,-}$ to $\SA_{n,+}$ in \S \ref{Sec:M self-duality}.

The modular conjugation $\theta_1$ defines the involution $*$ of the subfactor planar algebra $\SA$.
In the planar algebra $\SA_{n,+}$, the element $\delta^{\frac{n}{2}} \mu_n$ is represented by
$$
\begin{tikzpicture}
\fill[gray!20] (0,0)--++(.5,0) arc (180:0:.5)--++(.5,0) arc (180:0:.5)--++(.5,0) arc (180:0:.5)--(5,0) arc (0:180:2.5 and 1);
  \draw (.5,0) arc (180:0:.5);
  \draw (2,0) arc (180:0:.5);
  \draw (3.5,0) arc (180:0:.5);
  \draw (5,0) arc (0:180: 2.5 and 1);
\node at (2.5,0.2) {$\cdots$};
\node at (-.5,0.2) {$\$$};
\node at (6,0) {$,$};
\end{tikzpicture}
$$
where the diagram has $2n$ boundary points at the bottom.

\begin{remark}
Convention: We omit the $\$$ sign of the planar diagram if it is on the left.
\end{remark}

The action of any planar tangle on $\SA_{\cdot,+}$ is a composition of the following 6 elementary ones, for $n,\ell \geq0$:
\begin{itemize}

\item The rotation $\rho:\SA_{n,+} \to \SA_{n,+}$,
\begin{align*}
\rho(x)&=
\raisebox{-.5cm}{
\begin{tikzpicture}
\fill[gray!20] (1,0)--++(0,-.5)--++(-.5,0)--++(0,1.5)--++(3.5,0)--++(0,-2)--(4.5,-1)--++(0,2.5)--++(-4.5,0)--++(0,-2.5)--++(1.5,0)--++(0,1)--(1,0);
\draw (1,0)--++(0,-.5)--++(-.5,0)--++(0,1.5)--++(3.5,0)--++(0,-2);
\draw (4.5,-1)--++(0,2.5)--++(-4.5,0)--++(0,-2.5)--++(1.5,0)--++(0,1)--(1,0);
  \foreach \x in {2,3} {
  \fill[gray!20] (\x,0)--++(0,-1)--++(.5,0)--++(0,1)--++(-.5,0);
  \draw (\x,0)--++(0,-1);
  \draw (\x+.5,0)--++(0,-1);
  }
\draw (1+.5,0)--++(0,-1);
\draw (.75,0) rectangle (3.75,.5);
\node at (2.25,.25) {$x$};
\node at (2.75,-0.2) {$\cdots$};
\node at (.75,-0.2) {$~$};
\node at (.25,-0.7) {$~$};
\end{tikzpicture}
}
\end{align*}

\item The wedge product $\wedge:\SA_{n,+} \otimes \SA_{\ell,+} \to \SA_{n_\ell,+}$,
\begin{align*}
x\wedge y&=
\raisebox{-.5cm}{
\begin{tikzpicture}
  \foreach \x in {2,3} {
  \fill[gray!20] (\x,0)--++(0,-.5)--++(.5,0)--++(0,.5)--++(-.5,0);
  \draw (\x,0)--++(0,-.5);
  \draw (\x+.5,0)--++(0,-.5);
  }
\draw (1.75,0) rectangle (3.75,.5);
\node at (2.75,.25) {$x$};
\node at (2.75,-0.2) {$\cdots$};
\begin{scope}[shift={(2.25,0)}]
  \foreach \x in {2,3} {
  \fill[gray!20] (\x,0)--++(0,-.5)--++(.5,0)--++(0,.5)--++(-.5,0);
  \draw (\x,0)--++(0,-.5);
  \draw (\x+.5,0)--++(0,-.5);
  }
\draw (1.75,0) rectangle (3.75,.5);
\node at (2.75,.25) {$y$};
\node at (2.75,-0.2) {$\cdots$};
\end{scope}
\end{tikzpicture}
}
\end{align*}

\item
The inclusion $\iota_0:\SA_{n,\pm}\to \SA_{n+1,\pm}$,
\begin{align*}
\iota_0(x)&=\delta^{-1/2}
\raisebox{-.5cm}{
\begin{tikzpicture}
  \foreach \x in {2,3} {
  \fill[gray!20] (\x,0)--++(0,-.5)--++(.5,0)--++(0,.5)--++(-.5,0);
  \draw (\x,0)--++(0,-.5);
  \draw (\x+.5,0)--++(0,-.5);
  }
\fill[gray!20] (1,-.5) arc (180:0:.25);
\draw (1,-.5) arc (180:0:.25);
\draw (.75,0) rectangle (3.75,.5);
\node at (2.25,.25) {$x$};
\node at (2.75,-0.2) {$\cdots$};
\node at (.75,-0.2) {$~$};
\node at (.25,-0.2) {$~$};
\end{tikzpicture}.
}
\end{align*}

\item
The contraction $\phi_0:\SA_{n+1,\pm}\to \SA_{n,\pm}$,
\begin{align*}
\phi_0(x)&=\delta^{-1/2}
\raisebox{-.5cm}{
\begin{tikzpicture}
  \foreach \x in {2,3} {
  \fill[gray!20] (\x,0)--++(0,-.5)--++(.5,0)--++(0,.5)--++(-.5,0);
  \draw (\x,0)--++(0,-.5);
  \draw (\x+.5,0)--++(0,-.5);
  }
\fill[gray!20] (1,0) arc (-180:0:.25);
\draw (1,0) arc (-180:0:.25);
\draw (.75,0) rectangle (3.75,.5);
\node at (2.25,.25) {$x$};
\node at (2.75,-0.2) {$\cdots$};
\node at (.75,-0.2) {$~$};
\node at (.25,-0.2) {$~$};
\end{tikzpicture}.
}
\end{align*}

\item
The inclusion $\iota_1:\SA_{n,\pm}\to \SA_{n+1,\pm}$,
\begin{align*}
\iota_1(x)&=\delta^{-1/2}
\raisebox{-.5cm}{
\begin{tikzpicture}
  \foreach \x in {3} {
  \fill[gray!20] (\x,0)--++(0,-.5)--++(.5,0)--++(0,.5)--++(-.5,0);
  \draw (\x,0)--++(0,-.5);
  \draw (\x+.5,0)--++(0,-.5);
  }
\fill[gray!20] (1,0) rectangle (2.5,-.5);
  \foreach \x in {1,2.5} {
  \draw (\x,0)--++(0,-.5);
  }
\fill[white!20] (2,-.5) arc (0:180:.25);
\draw (2,-.5) arc (0:180:.25);
\draw (.75,0) rectangle (3.75,.5);
\node at (2.25,.25) {$x$};
\node at (2.75,-0.2) {$\cdots$};
\node at (.75,-0.2) {$~$};
\node at (.25,-0.2) {$~$};
\end{tikzpicture}.
}
\end{align*}

\item The contraction $\phi_1:\SA_{n+1,\pm}\to \SA_{n,\pm}$,
\begin{align*}
\phi_1(x)&=\delta^{-1/2}
\raisebox{-.5cm}{
\begin{tikzpicture}
  \foreach \x in {3} {
  \fill[gray!20] (\x,0)--++(0,-.5)--++(.5,0)--++(0,.5)--++(-.5,0);
  \draw (\x,0)--++(0,-.5);
  \draw (\x+.5,0)--++(0,-.5);
  }
\fill[gray!20] (1,0) rectangle (2.5,-.5);
  \foreach \x in {1,2.5} {
  \draw (\x,0)--++(0,-.5);
  }
\fill[white!20] (2,0) arc (0:-180:.25);
\draw (2,0) arc (0:-180:.25);
\draw (.75,0) rectangle (3.75,.5);
\node at (2.25,.25) {$x$};
\node at (2.75,-0.2) {$\cdots$};
\node at (.75,-0.2) {$~$};
\node at (.25,-0.2) {$~$};
\end{tikzpicture}.
}
\end{align*}

\end{itemize}
The first two are isometries.
The last four are partial isometries.
These actions except the rotation can be written as contractions:
\begin{align*}
\iota_0(x)&=\mu_1 \wedge x\\
\phi_0(x)&=\mu_1 \wedge_1 x\\
\iota_1(x)&=\delta \mu_3 \wedge_1 x\\
\phi_1(x)&=\delta \mu_3 \wedge_2 x.
\end{align*}
Moreover, $\phi_k$ is the adjoint operator of $\iota_k$:
\begin{proposition}\label{Prop:inc-con-adjoint}
For $x\in \SA_{n,\pm},y\in \SA_{n+2,\pm}$, $k=0,1$, we have
\begin{align*}
<\iota_k (x),y>=<x,\phi_k (y)>.
\end{align*}
\end{proposition}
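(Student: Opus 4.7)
The plan is to unfold each inner product as a closed planar diagram and show the two coincide using the self-adjointness of $\mu_n$. First, I would invoke the wedge-product expressions stated just before the proposition:
\begin{align*}
\iota_0(x) &= \mu_1 \wedge x, & \phi_0(y) &= \mu_1 \wedge_1 y,\\
\iota_1(x) &= \delta\, \mu_3 \wedge_1 x, & \phi_1(y) &= \delta\, \mu_3 \wedge_2 y.
\end{align*}
The inner product on $\SA_{n,\pm}=\hom_{\C^m}(1,\gamma^n)$ is the Hilbert space inner product, computed diagrammatically as $\langle \alpha,\beta\rangle = \alpha^*\circ\beta\in\hom_{\C^m}(1,1)=\mathbb{C}$, where $\alpha^*$ is the image of $\alpha$ under the $*$-involution, which on $\hom$-spaces is implemented by the modular conjugation $\theta_1$ (see Equation~\eqref{Equ:theta=rho*}).

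The key input is Equation~\eqref{Equ:theta-mu}, $\theta_1(\mu_n)=\mu_n$, so each $\mu_n$ is self-adjoint as a morphism. Consequently, the closed diagram representing $\langle\iota_k(x),y\rangle$ is obtained by stacking $x^*$ together with a self-adjoint copy of $\mu_{2k+1}$ on top of $y$, in the positions prescribed by the tangle for $\iota_k$. By planar isotopy, this closed diagram equals the one in which the same copy of $\mu_{2k+1}$ is instead attached to $y$ in the position defining $\phi_k(y)$, with $x^*$ left alone above; this latter diagram represents $x^*\circ\phi_k(y)=\langle x,\phi_k(y)\rangle$. The move is Frobenius reciprocity in $\C^m$ applied to the self-adjoint morphism $\mu_{2k+1}$ together with the associativity of the contractions: in both cases the combination $\mu_{2k+1}\wedge_k(\,\cdot\,)$ that defines $\iota_k$ becomes $\mu_{2k+1}\wedge_{k+1}(\,\cdot\,)$, namely $\phi_k$, when one passes to the adjoint.

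The only real point to verify is the normalization bookkeeping. For $k=0$ the identity reduces directly to Frobenius reciprocity for $\mu_1$, with no constants involved. For $k=1$ the factor $\delta$ appears symmetrically on both sides, so it cancels in the comparison. The planar isotopy that slides $\mu_{2k+1}$ between its $\iota_k$-position on top and its $\phi_k$-position on the side introduces no extra scalar, because $\mu_{2k+1}\in\hom_{\C^m}(1,\gamma^{2k+1})$ and the move is a purely topological one in the planar algebra. I expect this constant-tracking to be the only subtle step; the structural identity itself is immediate from the self-adjointness of $\mu_n$ and planar isotopy.
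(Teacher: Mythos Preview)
Your argument is correct, and in fact the paper does not supply a proof of this proposition at all: it is stated as a standard structural fact about subfactor planar algebras, immediately after the contraction formulas you quote. So there is nothing to compare against beyond noting that you have filled in what the paper leaves implicit.

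One small terminological point worth tightening: the inner product on $\hom_{\C^m}(1,\gamma^n)$ is computed via the \emph{categorical} adjoint (the vertical reflection $*$ in $\C$), whereas $\theta_1$ is the horizontal reflection that gives the planar-algebra involution. These differ by the rotation $\rho_\pi$ (Equation~\eqref{Equ:theta=rho*}), so the relevant self-adjointness of $\mu_3$ as a morphism $\gamma\to\gamma^2$ versus $\gamma^2\to\gamma$ follows from the combination $\theta_1(\mu_3)=\mu_3$ and $\rho_1(\mu_3)=\mu_3$, not from $\theta_1$-invariance alone. With that adjustment your isotopy argument goes through exactly as you describe, and the normalization bookkeeping is as you say: trivial for $k=0$, and the factor $\delta$ cancels for $k=1$.
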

As a subfactor planar algebra, we have the involution on $\SA_{n,+}$ defined by the reflection $\theta_1$ which is an anti-isometry.

We have also these actions on the configuration space in the $Y$-direction. In particular, the rotation $\rho_2$ and the reflection $\theta_2$ preserves the size $m$, and they are defined on $\SA_{n,+}=\hom(1,\gamma^n)$.

\begin{theorem} \label{Thm:Iso-rho-theta}
The action $\rho_2$ on $\SA$ is a planar algebra $*$-isomorphism.
The action $\theta_2$ on $\SA$ is an anti-linear planar algebra $*$-isomorphism.
\end{theorem}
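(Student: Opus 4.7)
The plan is to verify that $\rho_2$ and $\theta_2$ commute appropriately with each of the six elementary generators of the planar tangle action on $\SA$ together with the involution $*$, from which the theorem follows because every planar tangle is a composition of these generators.

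First I would recall the identifications made in \S \ref{Sec:PA of QM}: the involution $*$ on $\SA_{n,+}$ is the reflection $\theta_1$, the planar algebraic rotation is $\rho = \rho_1$, and the four remaining generators $\iota_0, \phi_0, \iota_1, \phi_1$ together with the wedge product $\wedge$ are all $X$-direction contractions with $\mu_n$ for $n \in \{1,3\}$:
\begin{align*}
x \wedge y &= x \wedge_0 y, & \iota_0(x) &= \mu_1 \wedge x, & \phi_0(x) &= \mu_1 \wedge_1 x, \\
\iota_1(x) &= \delta\, \mu_3 \wedge_1 x, & \phi_1(x) &= \delta\, \mu_3 \wedge_2 x.
\end{align*}

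Next I would handle compatibility with $*$ and with the rotation. By Proposition \ref{Prop:Rho Theta commute}, both $\rho_2$ and $\theta_2$ commute with $\rho_1$ and with $\theta_1$. For the linear map $\rho_2$ this immediately gives $\rho_2 \circ \rho = \rho \circ \rho_2$ and $\rho_2(x^*) = \rho_2(x)^*$. For the anti-linear map $\theta_2$, commutation with $\theta_1$ yields $\theta_2(x^*) = \theta_2(\theta_1(x)) = \theta_1(\theta_2(x)) = \theta_2(x)^*$, which is exactly the $*$-compatibility required of an anti-linear $*$-isomorphism; commutation with $\rho_1$ gives compatibility with the planar rotation.

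Then I would handle the five contraction-type generators in a single blow. By Proposition \ref{Prop:Rho Theta Contraction}, $\rho_2$ and $\theta_2$ commute with every $X$-direction contraction $\wedge_k$. Combined with $\rho_2(\mu_n) = \mu_n$ and $\theta_2(\mu_n) = \mu_n$ from Equations \eqref{Equ:rho-mu} and \eqref{Equ:theta-mu}, this shows that both maps commute with each of $\wedge, \iota_0, \phi_0, \iota_1, \phi_1$. The scalar $\delta = \mu^{(m-1)/2}$ that appears in $\iota_1$ and $\phi_1$ is real and positive, so the anti-linearity of $\theta_2$ poses no obstruction.

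There is no genuine obstacle here; the theorem is a formal consequence of previously established facts, once one observes that every elementary tangle action on $\SA$ lives entirely in the $X$-direction while $\rho_2, \theta_2$ are $Y$-direction symmetries that were already shown to commute with $X$-direction contractions and to fix the generating elements $\mu_n$. The only point to be careful about is tracking linearity versus anti-linearity when composing $\theta_2$ with $\theta_1$, which as above poses no issue.
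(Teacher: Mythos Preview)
Your proof is correct and follows essentially the same approach as the paper: invoke Proposition~\ref{Prop:Rho Theta commute} for compatibility with $\rho$ and $*$, then Proposition~\ref{Prop:Rho Theta Contraction} together with the invariance of $\mu_n$ (Equations~\eqref{Equ:rho-mu}, \eqref{Equ:theta-mu}, i.e.\ Proposition~\ref{Prop:Rho Theta preserve}) for the remaining generators. Your write-up is simply more explicit than the paper's three-line version.
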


\begin{proof}
By Propositions \ref{Prop:Rho Theta commute},
$\rho_2$ and $\theta_2$ commute with $\rho$ and $\theta_1$.
By Propositions \ref{Prop:Rho Theta preserve} and  \ref{Prop:Rho Theta Contraction}, we have that  $\rho_2$ and $\theta_2$ commute with $\wedge$, $\iota_k$ and $\phi_k$, for $k=1,2$. Therefore they are (anti-linear) planar algebraic *-isomorphisms.
\end{proof}

Similarly we have the 6+1 elementary actions on $\SA_{\cdot,-}$ by switching the shading.
The string Fourier transform (SFT) $\F:\mathscr{S}_{n,\pm}\to \SA_{n,\mp}$ is an isometry given by a clockwise one-string rotation.
Applying the SFT, we can represent the element in $\SA_{n,-}$ by $\F(x)$ for $x$ in $\SA_{n,+}$ and derive the six elementary actions on $\SA_{\cdot,-}$ by actions on $\SA_{\cdot,+}$.

For an element $x\in \SA_{n,+}$
\raisebox{-.5cm}{
\begin{tikzpicture}
  \foreach \x in {1,2,3} {
  \fill[gray!20] (\x,0)--++(0,-.5)--++(.5,0)--++(0,.5)--++(-.5,0);
  \draw (\x,0)--++(0,-.5);
  \draw (\x+.5,0)--++(0,-.5);
  }
\draw (.75,0) rectangle (3.75,.5);
\node at (2.25,.25) {$x$};
\node at (1.75,-0.2) {$\cdots$};
\node at (.5,0.25) {$~$};
\node at (4,0) {$,$};
\end{tikzpicture}}
its SFT $\F(x)\in \SA_{n,-}$ is given by
\begin{align}
\F(x)&=
\raisebox{-1cm}{
\begin{tikzpicture}
\fill[gray!20] (1,0)--++(0,-.5)--++(-.5,0)--++(0,1.5)--++(3.5,0)--++(0,-2)--(4.5,-1)--++(0,2.5)--++(-4.5,0)--++(0,-2.5)--++(1.5,0)--++(0,1)--(1,0);
  \foreach \x in {2,3} {
  \fill[gray!20] (\x,0)--++(0,-1)--++(.5,0)--++(0,1)--++(-.5,0);
  \draw (\x,0)--++(0,-1);
  \draw (\x+.5,0)--++(0,-1);
  }
\draw (1+.5,0)--++(0,-1);
\draw (1,0)--++(0,-.5)--++(-.5,0)--++(0,1.5)--++(3.5,0)--++(0,-2);
\draw (.75,0) rectangle (3.75,.5);
\node at (2.25,.25) {$x$};
\node at (2.75,-0.2) {$\cdots$};
\node at (.75,-0.2) {$~$};
\node at (.25,-0.7) {$~$};
\end{tikzpicture}}.
\end{align}
Then $\rho=\F^2$. Moreover, $\iota_k:=\F^{-k}\iota_0 \F^k$, $1\leq k \leq 2n$, is adding a cap before the $k^{\rm th}$ boundary points,
and $\phi_k:=\F^{-k}\phi_0 \F^k$, $1\leq k \leq 2n$, is a contraction between the $k+1^{\rm th}$ and $k+2^{\rm th}$ boundary points.

\begin{notation}
  By the spherical property, we define $\phi_1$ on $\SA_{1,\pm}$ by $\phi_0$.
\end{notation}

For $x\in \SA_{n,+}, y \in \SA_{n',+}$, we define $x\star y \in \SA_{n+n',+}$ as
\begin{align}\label{Equ:star}
x\star y&=
\raisebox{-2cm}{
\begin{tikzpicture}
\begin{scope}[xscale=-1]
\fill[gray!20]   (3,0)--++(0,-1)--++(1.5,0)--++(0,2)--++(0.5,0)--++(0,-2.5)--(5.5,-1.5)--++(0,2.5)--++(3.5,0)--++(0,-1.5)--++(-.5,0)--++(0,.5)-- (8,0)--++(0,-1)--++(1.5,0)--++(0,2.5)--++(-4,0)--++(0,.5)--++(4.5,0)--++(0,-3.5)--(10.5,-1.5)--++(0,4)--++(-5.5,0)--++(0,-1)--++(-1,0)--++(0,-2)--++(-0.5,0)--++(0,0.5)--(3,0);
  \draw (3,0)--++(0,-1)--++(1.5,0)--++(0,2)--++(0.5,0)--++(0,-2.5);
  \draw (5.5,-1.5)--++(0,2.5)--++(3.5,0)--++(0,-1.5)--++(-.5,0)--++(0,.5);
  \draw (8,0)--++(0,-1)--++(1.5,0)--++(0,2.5)--++(-4,0)--++(0,.5)--++(4.5,0)--++(0,-3.5);
  \draw (10.5,-1.5)--++(0,4)--++(-5.5,0)--++(0,-1)--++(-1,0)--++(0,-2)--++(-0.5,0)--++(0,0.5);
\node at (10.75,-1.2) {$~$};
  \foreach \x in {1,2} {
  \fill[gray!20] (\x,0) rectangle (\x+.5,-1.5);
  \draw (\x,0)--++(0,-1.5);
  \draw (\x+.5,0)--++(0,-1.5);
  }
\draw (.75,0) rectangle (3.75,.5);
\node at (2.25,.25) {$y$};
\node at (1.75,-0.2) {$\cdots$};
\node at (3.75,-0.2) {$~$};
\node at (4,0) {$$};
\begin{scope}[shift={(5,0)}]
  \foreach \x in {1,2} {
  \fill[gray!20] (\x,0)--++(0,-1.5)--++(.5,0)--++(0,1.5)--++(-.5,0);
  \draw (\x,0)--++(0,-1.5);
  \draw (\x+.5,0)--++(0,-1.5);
  }
\draw (.75,0) rectangle (3.75,.5);
\node at (2.25,.25) {$x$};
\node at (1.75,-0.2) {$\cdots$};
\node at (3.75,-0.2) {$~$};
\node at (4,0) {$$};
\end{scope}
\end{scope}
\end{tikzpicture}}
\end{align}
Then
\begin{align}
\label{Equ:Fourier rho}
\rho \F(x)&= \F \rho(x), \\
\label{Equ:Fourier tensor}
\F(x) \wedge \F(y)&=\F (x\star y), \\
\label{Equ:Fourier phi}
\phi_k \F(x)&= \F \phi_{k+1}(x), \\
\label{Equ:Fourier iota}
\iota_{k+1} \F(x)&= \F \iota_{k}(x), \\
\label{Equ:Fourier *}
\theta_1(\F(x))&=\F \rho^{-1} \theta_1(x).
\end{align}

Recall that $\SA_{n,+}=\hom(1,\gamma^n)$, thus $\star:\hom(1,\gamma^{n})\otimes\hom(1,\gamma^{n'}) \to \hom(1,\gamma^{n+n'})$ is also defined.
From the planar algebra $\SA$ to category $\C^m$, the shaded strip becomes a $\gamma$-string.
Then Equation \eqref{Equ:star} becomes
\begin{align*}
x\star y&=\delta^2
\raisebox{-.5cm}{
\begin{tikzpicture}
\node at (1.3,.25) {$x$};
\draw (1.5,.25)--(2,0)--++(0,-.5);
\node at (1.75,-0.2) {$\cdots$};
\draw (1.5,.25)--(1.5,0)--++(0,-.5);
\draw (1.5,.25)--(1,0)--++(0,-.25)--++(-.5,0)--++(0,.75);
\begin{scope}[shift={(2.5,0)}]
\node at (1.3,.25) {$y$};
\draw (1.5,.25)--(2,0)--++(0,-.5);
\node at (1.75,-0.2) {$\cdots$};
\draw (1.5,.25)--(1.5,0)--++(0,-.5);
\draw (1.5,.25)--(1,0)--++(0,-.25)--++(-.5,0)--++(0,.75);
\end{scope}
\draw (.5,.5)--++(2.5,0);
\draw (2.5,-.5)--++(0,1.5)--++(-2.5,0)--++(0,-1.5);
\node at (2.75,.75) {$\mu_4$};
\end{tikzpicture}}.
\end{align*}

\section{Modular self-duality}\label{Sec:M self-duality}

\subsection{The self-duality of Jones-Wassermann subfactors}
Suppose that $\SA$ is the subfactor planar algebras of the Jones-Wassermann subfactor for a unitary modular tensor category $\C$. In this section, we construct a planar algebraic *-isomorphism from $\SA_{n,-}$ to $\SA_{n,+}$.
Then the subfactor planar algebra $\SA$ is unshaded. Equivalently the Jones-Wassermann subfactor is self-dual.
%Since $\rho_2$ is a planar algebra $*$-isomorphism and it has periodicity $m$, there are $m$ different ways to lift the shading. Moreover, the composition of the duality map $\D_-$ and $\theta_2\rho^k$, for $k\in \Z_m$, defines the SFT on the unshaded planar algebra.

Induced by $\Phi$, we define $LL$ on $\hom(1,\gamma^n)\otimes \hom(1,\gamma^n)$.
Moreover, we use the following notation to simplify the diagram in Equation \eqref{Equ:bilinear form}:

%by the projection of configuration on the lattice on the plane $Y=0$ and the projection of configuration on the dual lattice on the plane $Y=-1/2$ as follows:

\begin{align} \nonumber
&LL(a_{\vec{j}}(b_{\vec{i}}), a'_{\vec{j}}( b'_{\vec{i}})) \\ \nonumber
=&LL(a_{\vec{j}}\otimes b_{\vec{i}}, a'_{\vec{j}}\otimes b'_{\vec{i}}) \\ \nonumber
=& \textcolor{black}{\delta^{1-n}} \sqrt{d(X_{\vec{i},\vec{j}})d(X'_{\vec{i},\vec{j}})}
\\ \label{Equ:bilinear form on SA}
&\raisebox{0cm}{
\begin{tikzpicture}
\myGT{3}{1};
%\node at (-1,0,-1) {$\displaystyle L(a_{\vec{j}})=\prod_{1\leq i \leq n,0\leq j\leq m-1}\sqrt{d(X_{i,j})}$};
            \foreach \x in {1,2,3,4} {
            \draw[blue](\x,-0.3,0)--(\x,0,0);
%            \node at (\x-.2,-0.2,0) {$X_{\x,\vec{j}}$};
            \node at (\x,-.4,0) {$b_{\x}^*$};
                \node[blue] at (\x,-0.3,0) {$\bullet$};
                }
                \draw[blue] (1,0,0)--(4,0,0);
                \node at (2.5,.05,0) {$a_{\vec{j}}$};
                \begin{scope}[shift={(.5,0,.5)}]
            \foreach \x in {1,2,3,4} {
            \draw[red](\x,-0.3,0)--(\x,0,0);
%            \node at (\x-.2,-0.2,0) {$X_{\x,\vec{j}}$};
            \node at (\x,-.4,0) {$(b'_{\x})^*$};
                \node[red] at (\x,-0.3,0) {$\bullet$};
                }
                \draw[red] (1,0,0)--(4,0,0);
                \node at (2.5,.07,0) {$a'_{\vec{j}}$};
                \end{scope}
\end{tikzpicture}
}
\end{align}

\begin{notation}
  We use $A_n$ to denote an ONB of $\hom_{\C^m}(1,\gamma^n)$. We use $B$ to denote an ONB of $\hom_{\C}(\gamma,1)$.
\end{notation}

\begin{definition}[string Fourier transform]\label{Def:SFT}
We represent elements in $\SA_{n,-}$ as $\F(x)$ for $x\in \SA_{n,+}=\hom(1,\gamma^n)$.  We define $\Psi:\SA_{n,-}\to\SA_{n,+}$, $n\geq0$,
\be \label{Equ:string Fourier transform}
\Psi(\F(x))=\sum_{x'\in B_{n}} LL(x, \theta_2(x'))x'.
\ee
\end{definition}
When $n=0$, $\Psi$ maps 1 to 1. When $n=1$, $\Psi$ maps the canonical inclusion from 1 to $\gamma$ in $\SA_{1,-}$ to the canonical inclusion in $\SA_{1,+}$.
Let us prove that $\Psi$ commutes with the 6+1 elementary actions, so $\Psi$ is a planar algebraic *-isomorphism from $\SA_{n,-}$ to $\SA_{n,+}$. Then $\SA$ becomes an unshaded planar algebra. Moreover,  the map $\Psi\F$ in Equation \eqref{Equ:string Fourier transform} defines the SFT on the unshaded planar algebra $\SA_{n}$.

When $m=n=2$, $\displaystyle \gamma=\bigoplus_{X\in Irr(\C)} X\otimes \overline{X}$.
The vectors $\{ v_X\}_{X\in Irr(\C)}$ form an ONB of $\hom_{\C^2}(1,\gamma^2)$, where $v_X$
is the canonical inclusion from 1 to $(X\otimes \overline{X}) \otimes \overline{(X\otimes \overline{X})}$ in $\C^2$. We call the ONB $\{ v_X\}_{X\in Irr(\C)}$ a {\it standard basis} of $\hom_{\C^2}(1,\gamma^2)$.

The vector $v_X$ is independent of the choice of the representative of $X\otimes \overline{X}$ in $\gamma$. For convenience, we take $b^*$ to be the canonical inclusion from 1 to $X\otimes \overline{X}$ to indicate the multiplicity of $X\otimes \overline{X}$ in $\gamma$,
then $\overline{(X\otimes \overline{X})(b)}=(\overline{X} \otimes X)(\theta_1(b))$.

The following result is a consequence of the modular self-duality and our definition of the SFT.
\begin{proposition}\label{Prop:SFT}
The SFT on the standard basis of $\hom_{\C^2}(1,\gamma^2)$ is the same as the modular $S$ matrix of the MTC $\C$: for any $X,X'\in Irr{C}$,
\begin{align}
\langle \Psi\F(v_X),v_{X'} \rangle=S_{XX'}.
\end{align}
\end{proposition}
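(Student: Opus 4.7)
The plan is to unpack the definitions and reduce the claim to a direct evaluation of a Hopf link. First, note that the standard basis $\{v_{X'}\}_{X' \in Irr(\C)}$ is an orthonormal basis of $\hom_{\C^2}(1, \gamma^2)$: indeed each $v_{X'}$ has unit norm and distinct $X'$'s give orthogonal summands. Thus I may take $B_2 = \{v_{X'}\}$ in Definition \ref{Def:SFT}, so that
\begin{align*}
\Psi\F(v_X) = \sum_{X' \in Irr(\C)} LL\bigl(v_X,\, \theta_2(v_{X'})\bigr)\, v_{X'},
\end{align*}
and hence $\langle \Psi\F(v_X), v_{X'}\rangle = LL(v_X, \theta_2(v_{X'}))$.

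Next I would identify $\theta_2(v_{X'})$. Since $m=2$, the action $\theta_2$ is the reflection swapping the two tensor factors of $\C^2$, composed with $\theta_\C$ on each factor, followed by the appropriate reordering of the boundary $Y$-configurations. The object $(X' \otimes \overline{X'}) \otimes (\overline{X'} \otimes X')$ is precisely symmetric under this operation up to the canonical identifications built into Notation \ref{Def:thetaX}, so a short check using $\theta_\C(X') = \overline{X'}$ and Equation \eqref{Equ:theta=rho*} shows that $\theta_2(v_{X'}) = v_{X'}$.

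The main computation is then to evaluate $LL(v_X, v_{X'})$ using formula \eqref{Equ:bilinear form on SA} specialized to $m=n=2$. By the proof of Theorem \ref{Thm:MN Self-duality} in the $m=n=2$ case, the closed 3D diagram for $LL$ is the Hopf link of the two strands labeled by the $Z$-configurations, which for the standard basis elements become the strands $X \otimes \overline{X}$ and $X' \otimes \overline{X'}$. Splitting these as single strands and applying the ribbon/braiding axioms, this reduces to a single Hopf link of $X$ with $X'$. Combined with the prefactor $\delta^{1-n}\sqrt{d(X_{\vec{i},\vec{j}})d(X'_{\vec{i},\vec{j}})} = \mu^{-1/2} \cdot d(X) d(X')$ in our case, the result is
\begin{align*}
LL(v_X, v_{X'}) = \mu^{-1/2}\, \mathrm{tr}\bigl(c_{X',X} \circ c_{X,X'}\bigr),
\end{align*}
which is exactly the definition of $S_{XX'}$ from \cite{Turaev}.

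The main obstacle will be step two: correctly verifying that $\theta_2(v_{X'}) = v_{X'}$, which requires carefully tracking how the $Y$-direction reflection acts on the duality structure and on the specific canonical inclusion defining $v_{X'}$, and checking that the sign/phase coming from $\theta_\C$ on both factors cancels. Once this is settled, the rest is a direct diagrammatic identification of the Hopf link with the $S$-matrix entry, including bookkeeping of the $\mu^{-1/2}$ and $d(X)d(X')$ factors against the standard normalization.
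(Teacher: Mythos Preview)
Your approach is essentially the paper's: both reduce $\langle \Psi\F(v_X), v_{X'}\rangle$ to $LL(v_X, \theta_2(v_{X'}))$, identify the resulting closed 3D diagram as $\mu^{-1/2}$ times the Hopf link of $X$ and $X'$, and recognize this as $S_{XX'}$. The only difference is that you first argue $\theta_2(v_{X'}) = v_{X'}$ and then invoke the $m=n=2$ case of Theorem~\ref{Thm:MN Self-duality}, whereas the paper draws the diagram for $LL(v_X,\theta_2(v_{X'}))$ with the effect of $\theta_2$ already built in (visible as extra crossings on the $X'$-labelled component) and simplifies it directly by isotopy to the Hopf link; both routes are fine, and the paper's avoids having to check that the phase in your one-dimensional-hom-space argument is exactly $1$.

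One bookkeeping correction: for $m=n=2$ the $Z$-configuration of $v_X$ occupies four sites, each contributing $d(X)$, so $\sqrt{d(X_{\vec{i},\vec{j}})} = d(X)^2$, not $d(X)$. The prefactor from \eqref{Equ:bilinear form on SA} is therefore $\mu^{-1/2} d(X)^2 d(X')^2$; the extra $d(X)^2 d(X')^2$ is exactly cancelled by the normalizations of the four cups and four caps that make $v_X$ and $v_{X'}$ unit vectors, leaving $\mu^{-1/2}$ times the unnormalized Hopf link, as in the paper.
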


\begin{proof}
By Definition \ref{Def:SFT}, the matrix units of $\Psi\F$ on the basis $\{ v_X\}_{X\in Irr(\C)}$ is 
\begin{align*}
\langle \Psi\F(v_X),v_{X'} \rangle&= LL(x, \theta_2(x'))\\
&=\mu^{-\frac{1}{2}}
\raisebox{-1.5cm}{
\begin{tikzpicture}
\begin{scope}[scale=1]
\myGT{3}{1};
            \foreach \x in {1} {
\begin{scope}[shift={(.5,0,.5)}]
            \draw[red](\x,-0.3,0)--(\x,-0.3,-1);
\end{scope}
            \draw[blue](\x,-0.3,0)--(\x,-0.3,-1);
               \foreach \z in {0,1} {
\begin{scope}[shift={(.5,0,.5)}]
                    \drawWL{red}{\x,-.15,-\z}{\x,-0.3,-\z};
\end{scope}
                    \drawWL{blue}{\x,0,-\z}{\x,-0.3,-\z};
                 }}
%\foreach \z in {1} {
%\begin{scope}[shift={(.5,0,.5)}]
%%                \drawWL{red} {1,0,-\z}{2,0,-\z};
%                \drawWL{red} {1,-.15,-\z}{2,0,-\z};
%                \drawWL{red} {1,0,-\z}{2,-.15,-\z};
%                \draw[red] (1,0,-\z)--(2,0,-\z);
%%                \node at (2.5,.07,-\z) {$a'_{\z}$};
%\end{scope}
%                \drawWL{blue} {1,0,-\z}{2,0,-\z};
%%                \node at (2.5,.05,-\z) {$a_{\z}$};
%}
\foreach \x in {2} {
\begin{scope}[shift={(.5,0,.5)}]
            \draw[red](\x,-0.3,0)--(\x,-0.3,-1);
\end{scope}
            \draw[blue](\x,-0.3,0)--(\x,-0.3,-1);
               \foreach \z in {0,1} {
\begin{scope}[shift={(.5,0,.5)}]
                    \drawWL{red}{\x,-.15,-\z}{\x,-0.3,\z-1};
                    \draw[red] (2,-.15,-1)--(2,-.3,0);
            \draw[red](2,-0.3,0)--(2,-0.3,-1);
\end{scope}
                    \drawWL{blue}{\x,0,\z-1}{\x,-0.3,-\z};
                 }}        
\foreach \z in {0,1} {
\node[blue] at (2,-0.3,-\z) {$\bullet$};
}
\foreach \z in {0,1} {
\begin{scope}[shift={(.5,0,.5)}]
%                \drawWL{red} {1,0,-\z}{2,0,-\z};
                \drawWL{red} {1,-.15,-\z}{2,0,-\z};
                \drawWL{red} {1,0,-\z}{2,-.15,-\z};
                \draw[red] (1,0,-\z)--(2,0,-\z);
%                \node at (2.5,.07,-\z) {$a'_{\z}$};
\end{scope}
                \drawWL{blue} {1,0,-\z}{2,0,-\z};
%                \node at (2.5,.05,-\z) {$a_{\z}$};
}       
 \foreach \z in {0,1} {
    \foreach \x in {1,2 } {
%    \fill[red]  (\x,0,\z) circle (.2);
%    \node[red] at (\x,0,\z) {$\bullet$};
%    \node[red] at (\x,-0.6,\z) {$\bullet$};
\begin{scope}[shift={(.5,0,.5)}]
    \node[red] at (\x,0,-\z) {$\bullet$};
    \node[red] at (\x,-0.3,-\z) {$\bullet$};
\end{scope}
    \node[blue] at (\x,0,-\z) {$\bullet$};
%    \node[blue] at (\x,-0.3,-\z) {$\bullet$};
    \node[blue] at (1,-0.3,-\z) {$\bullet$};
%\draw[blue] (\x-.1,-0.3+.1,-\z)--(\x,-0.3,-\z)--(\x+.1,-0.3+.1,-\z);
    }}
   \node at (1-.2,-0.15,-0) {$X$};
   \draw[blue] (1-.05,-0.2+.05,-0)--(1,-0.2,-0)--(1+.05,-0.2+.05,-0);
   \node at (1.5-.2,-0.2,0.5) {$\overline{X'}$};
   \draw[red] (1.5-.05,-0.2+.00,.5)--(1.5,-0.2-.05,.5)--(1.5+.05,-0.2+.00,.5);
    \end{scope}
\end{tikzpicture}}
\\
&=\mu^{-\frac{1}{2}}
\raisebox{-1cm}{
\begin{tikzpicture}
\begin{scope}[scale=1]
\draw[blue] (0,0) arc (180:0:1);
\draw[WL,white] (1,0) arc (180:0:1);
\draw[red] (1,0) arc (180:-180:1);
\draw[WL,white] (0,0) arc (-180:0:1);
\draw[blue] (0,0) arc (-180:0:1);
\draw[red] (1-.1,.2)--(1,0)--(1.1,.2);
\draw[blue] (2-.1,.2)--(2,0)--(2.1,.2);
\node at (2-.3,0) {$X$};
\node at (1-.3,0) {$X'$};
\end{scope}
\end{tikzpicture}} \\
&=S_{XX'}
\end{align*}

\end{proof}

%$\{\frac{1}{d(X)} 1_{X\otimes \overline{X}} | X \in Irr(\C)\}$ defines an ONB of $\hom_{\C^2}(1,\gamma^2)$ through Frobenius reciprocity
%\be
%\hom_{\C^2}(1,\gamma^2) \cong \hom_{\C^2}(\gamma,\gamma) =\bigoplus_{X\in Irr(\C)} \hom_{\C^2}(X\otimes \overline{X}).
%\ee

\begin{proposition}\label{Prop:Iso-rho-*}
For $x\in\hom(1,\gamma^n)$, $n\geq1$,
\begin{align*}
\Psi (\F (\rho(x))&=\rho \Psi( \F(x)),\\
\Psi (\F \rho^{-1} \theta_1(x))&=\theta_1(\Psi( \F(x))).
\end{align*}
\end{proposition}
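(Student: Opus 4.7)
The plan is to expand $\Psi(\F(\cdot))$ via its definition in \eqref{Equ:string Fourier transform} and push $\rho=\rho_1$ and $\theta_1$ through the bilinear pairing $LL$ using the covariance identities of Proposition \ref{Prop:LLrhotheta} together with the commutation relations $\rho_1\leftrightarrow\theta_2$, $\theta_1\leftrightarrow\theta_2$ from Proposition \ref{Prop:Rho Theta commute}. A preliminary observation used throughout: since $\theta_2$ is anti-linear and $LL$ is bilinear, the coefficient $x'\mapsto LL(x,\theta_2(x'))$ is anti-linear in $x'$, so Proposition \ref{Prop:basis-independence} (applied to the linear functional $\overline{LL(x,\theta_2(\cdot))}$) implies that $\sum_{x'\in B_n} LL(x,\theta_2(x'))x'$ is independent of the choice of ONB $B_n$. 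In particular we may replace $B_n$ by $\rho_1^{-1}(B_n)$ or $\theta_1(B_n)$, both of which are ONBs of $\hom(1,\gamma^n)$ since $\rho_1$ is a linear isometry and $\theta_1$ is an anti-isometry.

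For the first identity, I apply \eqref{Equ:LLrho} to shift $\rho_1$ from the first to the second argument of $LL$ and then commute it past $\theta_2$:
\[
LL(\rho_1 x,\theta_2(x'))=LL(x,\rho_1^{-1}\theta_2(x'))=LL(x,\theta_2\rho_1^{-1}(x')).
\]
Setting $y=\rho_1^{-1}(x')$ and re-indexing the sum to range over $\rho_1^{-1}(B_n)$, I obtain
\[
\Psi(\F(\rho(x)))=\sum_{y\in\rho_1^{-1}(B_n)} LL(x,\theta_2(y))\,\rho_1(y)=\rho_1\Bigl(\sum_{y\in\rho_1^{-1}(B_n)} LL(x,\theta_2(y))\,y\Bigr)=\rho(\Psi(\F(x))),
\]
where the last equality uses basis-independence to identify the inner sum with $\Psi(\F(x))$.

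For the second identity, the same use of \eqref{Equ:LLrho} and $\rho_1\leftrightarrow\theta_2$ rewrites the coefficient as $LL(\theta_1 x,\theta_2\rho_1(x'))$. I then apply \eqref{Equ:LLtheta1} in the solved form $LL(\theta_1 u,\rho_1\theta_1 v)=\overline{LL(u,v)}$, with $v=\theta_1\theta_2(x')$ chosen so that $\rho_1\theta_1(v)=\theta_2\rho_1(x')$ (using $\theta_1\theta_2=\theta_2\theta_1$ and $\theta_1^2=1$). This gives
\[
\Psi(\F(\rho^{-1}\theta_1(x)))=\sum_{x'\in B_n}\overline{LL(x,\theta_1\theta_2(x'))}\,x'.
\]
Substituting $z=\theta_1(x')$ and using $\theta_1\theta_2\theta_1=\theta_2$ rewrites this as $\sum_{z\in\theta_1(B_n)}\overline{LL(x,\theta_2(z))}\,\theta_1(z)$, which by anti-linearity of $\theta_1$ equals $\theta_1\bigl(\sum_{z\in\theta_1(B_n)} LL(x,\theta_2(z))\,z\bigr)$; basis-independence then collapses the inner sum to $\Psi(\F(x))$, yielding $\theta_1(\Psi(\F(x)))$.

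The main care needed is tracking which operators are linear ($\rho_k$) and which are anti-linear ($\theta_k$), and invoking Proposition \ref{Prop:basis-independence} precisely at the point where the summand is anti-linear in the summation variable; beyond that, everything reduces to a direct application of the covariance properties of $LL$ already established in Proposition \ref{Prop:LLrhotheta}.
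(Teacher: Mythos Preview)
Your proof is correct and follows essentially the same route as the paper's: expand $\Psi(\F(\cdot))$ via its defining sum, push $\rho_1$ (resp.\ $\theta_1$) through $LL$ using Proposition~\ref{Prop:LLrhotheta}, commute with $\theta_2$ via Proposition~\ref{Prop:Rho Theta commute}, and re-index over a shifted ONB. The only difference is that you make explicit the basis-independence argument (via Proposition~\ref{Prop:basis-independence}) and the (anti-)linearity bookkeeping that the paper leaves implicit.
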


\begin{proof}
By Propositions \ref{Prop:Rho Theta commute} and \ref{Prop:LLrhotheta}, we have
\begin{align}
&\Psi (\F (\rho(x))\\
=& \sum_{x'\in A_n} LL(\rho(x), \theta_2(x'))x'  \\
=& \sum_{x'\in A_n} LL(x, \rho^{-1}\theta_2(x')))x' \\
=& \sum_{x'\in A_n} LL(x, \theta_2\rho^{-1}(x'))\rho\rho^{-1}(x') \\
=&\rho \Psi( \F(x)).
\end{align}

Similarly we have
$\Psi (\F \rho^{-1} \theta_1(x))=\theta_1(\Psi( \F(x))).$
\end{proof}

\begin{lemma}\label{Lem:star}
For $x,x'\in \hom_{\C^m}(1,\gamma^n)$, $y,y'\in \hom_{\C^m}(1,\gamma^{\ell})$, $n,\ell\geq1$,
$$LL(x\star y , x'\wedge y')=LL(x, x') LL(y, y').$$
\end{lemma}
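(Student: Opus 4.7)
The plan is to prove the multiplicativity identity by unfolding both sides using the 3D diagrammatic definition of $LL$ in \eqref{Equ:bilinear form on SA}, together with the definitions of $\star$ in \eqref{Equ:star} and of $\wedge$.

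Because $x'\wedge y'$ is the juxtaposition of two configurations, its $X$- and $Y$-portions on the red side of \eqref{Equ:bilinear form on SA} split as a disjoint union: the $x'$-block lives in columns $1,\ldots,n$, the $y'$-block lives in columns $n+1,\ldots,n+\ell$, and no red strand joins the two blocks. For the left-hand side, the blue diagram of $x\star y$ differs from the naive juxtaposition only by the $\mu_4$-structure coming from \eqref{Equ:star}, which I claim can be isotoped entirely outside the $(n+\ell)\times m$ grid region where the Hopf-link crossings with the red strands occur. Granted this isotopy, the pairing decouples: the $x$-portion of the blue diagram links only with the red $x'$-block (producing the diagram of $LL(x,x')$), the $y$-portion links only with the red $y'$-block (producing $LL(y,y')$), and the $\mu_4$-structure contributes a pure scalar that does not link with anything else.

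To finish the proof, I would evaluate that scalar and carry out the prefactor bookkeeping. The decoupled $\mu_4$-loop can be computed using Lemma \ref{Lem:mu composition} together with $d(\gamma)=\delta^2$ from Proposition \ref{Prop:dim gamma}, and it must combine with the $\delta^2$ prefactor in \eqref{Equ:star} and the discrepancy between the scalar $\delta^{1-(n+\ell)}$ appearing on the left of \eqref{Equ:bilinear form on SA} and the product $\delta^{1-n}\cdot\delta^{1-\ell}$ of scalars on the right to give exactly $1$. This is a routine check once the geometric factorization is established.

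The main obstacle will be justifying the isotopy rigorously. One must track the explicit 3D placement of the strands in $x\star y$ as given by \eqref{Equ:star}, and verify that the $\mu_4$-strands do not braid nontrivially with any red strand of $x'\wedge y'$, so that they can be pulled off into an isolated region of 3D space. The argument works precisely because $\mu_4$ is placed at the ``top'' of the planar tangle defining $\star$, and because the red $Y$-strings in $x'\wedge y'$ hang straight down without any horizontal connection across the $x'/y'$ block boundary; these two features together ensure that the linking between the $\mu_4$-loop and the red diagram is trivial.
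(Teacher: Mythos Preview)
Your geometric intuition is on the right track—the red diagram $x'\wedge y'$ is indeed disconnected, and one expects the pairing to factor—but the mechanism you propose for achieving the factorization has a genuine gap.

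The problem is that the blue diagram $x\star y$ is \emph{connected}: the $\mu_4$-structure is not a closed loop but an open morphism with four $\gamma$-strands. Two of these become the first and $(n{+}1)$-th external boundary strands of $x\star y$ and must therefore link with the first red strands of $x'$ and $y'$ in the $LL$ pairing; the other two cap off the original first strands of $x$ and $y$. Consequently there is no isotopy that peels $\mu_4$ off as an unlinked closed component while leaving intact copies of the configurations for $x$ and $y$: any ``$x$-portion'' you try to extract from $x\star y$ is missing its first column, and the strand replacing it comes from $\mu_4$ rather than from $x$. So the claim that the $\mu_4$-structure ``contributes a pure scalar that does not link with anything else'' cannot be justified by isotopy alone.

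The paper's proof resolves this by a different mechanism. It expands $\mu_4$ in an orthonormal basis (summing over $b,d\in B$ and $\alpha\in A_{b,d}$) and then applies Lemma~\ref{Lem:contraction rule}—the contraction rule, which inserts a resolution $\sum_{b} b\otimes\theta_1(b)$ on a single $\gamma$-strand—to \emph{cut} the two internal strands joining $\mu_4$ to the first strands of $x$ and $y$. Only after this algebraic cut does the diagram factor into the piece $LL(x,x')\,LL(y,y')$ and a separate closed piece carrying $\alpha,\alpha^*$ and the caps; the latter is then summed to a scalar using the resolution of the identity~\eqref{Equ:resolution of identity} and $d(\gamma)=\delta^2$ from Proposition~\ref{Prop:dim gamma}. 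That cutting step is the essential idea your outline is missing; without it the desired factorization simply does not follow from topology.
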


\begin{proof}
Take $x=a_{\vec{j}}(b_{\vec{i}})$, $x=a'_{\vec{j}}(b'_{\vec{i}})$, $y=c_{\vec{j}}(d_{\vec{i}})$ and $y'=c'_{\vec{j}}(d'_{\vec{i}})$.
Note that the boundary of a $Y$-configuration $b$ is a $Z$-configuration, denoted by $\vec{X}(b)$. It represents a simple sub object of $\gamma$ in $\C^m$.
For $Y$-configurations $b,d \in B$, we define $A_{b,d}$ to be an ONB of $\hom_{\C^m} (1, X(b) \otimes X(\theta_1(b_1))\otimes X(d)\otimes X(\theta_1(d_1)))$, a sub space of $\hom_{\C^m}(1,\gamma^4)$. Then

\begin{align*}
&LL(\vec{x}\star\vec{y} , \vec{x'}\wedge\vec{y'})\\
=&\sum_{b,d \in B_1, \alpha \in A_{b,d}}  \delta^{1-n-\ell} \delta^{2} \sqrt{\frac{d(b) d(d) d(b_{\vec{j}}) d(b'_{\vec{j}}) d(d_{\vec{j}}) d(d'_{\vec{j}})}{d(b_1) d(d_1)}} \overline{L(\alpha)}\\
&
\raisebox{-.5cm}{
\begin{tikzpicture}
\myGT{3}{1};
\node at (1.5,-.6) {$b_{2}$};
\node at (2,-.6) {$b_{3}$};
\node at (0,-.6) {$b$};
\draw[blue] (1,0)--(2,0);
\node at (1.5,.05) {$a_{\vec{j}}$};
\draw[blue] (2,0)--++(0,-.5);
%\node at (1.75,-0.2) {$\cdots$};
\draw[blue] (1.5,0)--++(0,-.5);
\foreach \x in {1.5,2}{
\node[blue] at (\x,-.5) {$\bullet$};}
\draw[blue] (1,0)--++(0,-.25)--++(-.5,0)--++(0,.5);
\begin{scope}[shift={(.5,0,.5)}]
\draw[red] (1,0)--(2,0);
\node at (1.5,.08) {$a'_{\vec{j}}$};
\draw[red] (2,0)--++(0,-.5);
\draw[red] (1.5,0)--++(0,-.5);
\draw[red] (1,0)--++(0,-.5);
\begin{scope}[shift={(.5,0)},xscale=.5]
\foreach \x in {1,2,3}{
\node at (\x,-.6) {$b'_{\x}$};
}
\end{scope}
\foreach \x in {1,1.5,2}{
\node[red] at (\x,-.5) {$\bullet$};}
\end{scope}
\begin{scope}[shift={(2.5,0)}]
\node at (1.5,-.6) {$d_{2}$};
\node at (2,-.6) {$d_{3}$};
\node at (0,-.6) {$d$};
\draw[blue] (1,0)--(2,0);
\node at (1.5,.05) {$c_{\vec{j}}$};
\draw[blue] (2,0)--++(0,-.5);
\draw[blue] (1.5,0)--++(0,-.5);
\draw[blue] (1,0)--++(0,-.25)--++(-.5,0)--++(0,.5);
\foreach \x in {1.5,2}{
\node[blue] at (\x,-.5) {$\bullet$};}
\begin{scope}[shift={(.5,0,.5)}]
\draw[red] (1,0)--(2,0);
\node at (1.5,.08) {$c'_{\vec{j}}$};
\draw[red] (2,0)--++(0,-.5);
\draw[red] (1.5,0)--++(0,-.5);
\draw[red] (1,0)--++(0,-.5);
\begin{scope}[shift={(.5,0)},xscale=.5]
\foreach \x in {1,2,3}{
\node at (\x,-.6) {$d'_{\x}$};
}
\end{scope}
\foreach \x in {1,1.5,2}{
\node[red] at (\x,-.5) {$\bullet$};}
\end{scope}
\end{scope}
\draw[blue] (.5,.25)--++(2.5,0);
\draw[blue] (2.5,-.5)--++(0,1)--++(-2.5,0)--++(0,-1);
%\fill[white] (2.5,.5) circle (.2);
%\draw[blue][] (2.5,.5) circle (.2);
\node at (2.6,.35) {$\alpha$};
\node[blue] at (0,-.5) {$\bullet$};
\node[blue] at (2.5,-.5) {$\bullet$};
\end{tikzpicture}} \\
=&\sum_{b,d \in B_1, \alpha \in A_{b,d}}  \delta^{3-n-\ell} \sqrt{\frac{d(b) d(d) d(b_{\vec{j}}) d(b'_{\vec{j}}) d(d_{\vec{j}}) d(d'_{\vec{j}})}{d(b_1) d(d_1)}} \overline{L(\alpha)}\\&
\raisebox{-.5cm}{
\begin{tikzpicture}
\myGT{3}{1};
\node at (.5,-.6) {$\theta(b_{1})$};
\node at (1,-.6) {$b_{1}$};
\node at (1.5,-.6) {$b_{2}$};
\node at (2,-.6) {$b_{3}$};
\node at (0,-.6) {$b$};
\draw[blue] (1,0)--(2,0);
\node at (1.5,.05) {$a_{\vec{j}}$};
\draw[blue] (2,0)--++(0,-.5);
%\node at (1.75,-0.2) {$\cdots$};
\draw[blue] (1.5,0)--++(0,-.5);
\foreach \x in {1,1.5,2}{
\node[blue] at (\x,-.5) {$\bullet$};}
\draw[blue] (1,0)--++(0,-.5);
\draw[blue] (.5,-.5)--++(0,.75);
\node[blue] at (.5,-.5) {$\bullet$};
\begin{scope}[shift={(.5,0,.5)}]
\draw[red] (1,0)--(2,0);
\node at (1.5,.08) {$a'_{\vec{j}}$};
\draw[red] (2,0)--++(0,-.5);
\draw[red] (1.5,0)--++(0,-.5);
\draw[red] (1,0)--++(0,-.5);
\begin{scope}[shift={(.5,0)},xscale=.5]
\foreach \x in {1,2,3}{
\node at (\x,-.6) {$b'_{\x}$};
}
\end{scope}
\foreach \x in {1,1.5,2}{
\node[red] at (\x,-.5) {$\bullet$};}
\end{scope}
\begin{scope}[shift={(2.5,0)}]
\node at (.5,-.6) {$\theta(d_{1})$};
\node at (1,-.6) {$d_{1}$};
\node at (1.5,-.6) {$d_{2}$};
\node at (2,-.6) {$d_{3}$};
\node at (0,-.6) {$d$};
\draw[blue] (1,0)--(2,0);
\node at (1.5,.05) {$c_{\vec{j}}$};
\draw[blue] (2,0)--++(0,-.5);
\draw[blue] (1.5,0)--++(0,-.5);
\draw[blue] (1,0)--++(0,-.5);
\draw[blue] (.5,-.5)--++(0,.75);
\node[blue] at (.5,-.5) {$\bullet$};
\foreach \x in {1,1.5,2}{
\node[blue] at (\x,-.5) {$\bullet$};}
\begin{scope}[shift={(.5,0,.5)}]
\draw[red] (1,0)--(2,0);
\node at (1.5,.08) {$c'_{\vec{j}}$};
\draw[red] (2,0)--++(0,-.5);
\draw[red] (1.5,0)--++(0,-.5);
\draw[red] (1,0)--++(0,-.5);
\begin{scope}[shift={(.5,0)},xscale=.5]
\foreach \x in {1,2,3}{
\node at (\x,-.6) {$d'_{\x}$};
}
\end{scope}
\foreach \x in {1,1.5,2}{
\node[red] at (\x,-.5) {$\bullet$};}
\end{scope}
\end{scope}
\draw[blue] (.5,.25)--++(2.5,0);
\draw[blue] (2.5,-.5)--++(0,1)--++(-2.5,0)--++(0,-1);
%\fill[white] (2.5,.5) circle (.2);
%\draw[blue][] (2.5,.5) circle (.2);
\node at (2.6,.35) {$\alpha$};
\node[blue] at (0,-.5) {$\bullet$};
\node[blue] at (2.5,-.5) {$\bullet$};
\end{tikzpicture}} \text{by Lemma \ref{Lem:contraction rule}}\\
=&\sum_{b,d \in B_1, \alpha \in A_{b,d}} \delta^{4} \frac{1}{d(b_1) d(d_1)}  |L(\alpha)|^2 LL(x, x')LL(y, y')\\
=&\sum_{d \in B_1} \delta^{-2} d(d) LL(x, x')LL(y, y') \text{~\quad\quad\quad\quad by Lemma \ref{Lem:contraction rule} and Equation \eqref{Equ:resolution of identity}}\\
=&LL(x, x')LL(y, y') \text{\quad\quad\quad\quad\quad\quad\quad\quad\quad\quad by Proposition \ref{Prop:dim gamma}.}
\end{align*}
By the linearity, the equation holds for any $x$ and $x'$.
(Here we give the pictures for $n=\ell=3$. One can figure out the general case.)
\end{proof}

\begin{proposition}\label{Prop:Iso-tensor}
For $x\in \hom(1,\gamma^n)$, $y\in \hom(1,\gamma^{n'})$, $n,n'\geq1$,
\begin{align*}
\Psi (\F (x\star y))&=\Psi(\F(x)) \wedge \Psi(\F(y)).
\end{align*}
\end{proposition}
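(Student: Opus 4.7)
The plan is to expand both sides via Definition \ref{Def:SFT} and then appeal to Lemma \ref{Lem:star} together with the compatibility of $\theta_2$ with $\wedge$. First, I would observe that an ONB $A_{n+n'}$ of $\hom_{\C^m}(1,\gamma^{n+n'})$ can be chosen to be of the product form $\{x'\wedge y' : x'\in A_n,\; y'\in A_{n'}\}$. Indeed, under the identification $\hom(1,\gamma^{n+n'})=\hom(1,\gamma^n\otimes\gamma^{n'})\cong \hom(1,\gamma^n)\otimes\hom(1,\gamma^{n'})$, the wedge product $\wedge$ corresponds to the tensor product of morphisms, and inner products in the tensor category factor; so this collection is orthonormal and complete.

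Next I would expand the left-hand side using Definition \ref{Def:SFT}:
\begin{align*}
\Psi(\F(x\star y))
=\sum_{x'\in A_n,\; y'\in A_{n'}} LL\bigl(x\star y,\; \theta_2(x'\wedge y')\bigr)\; (x'\wedge y').
\end{align*}
Since $\theta_2$ is an action in the $Y$-direction, Proposition \ref{Prop:Rho Theta Contraction} (applied to the wedge, which is the $0$-string contraction $\wedge_0$) gives $\theta_2(x'\wedge y')=\theta_2(x')\wedge \theta_2(y')$. Then Lemma \ref{Lem:star} yields
\begin{align*}
LL\bigl(x\star y,\; \theta_2(x')\wedge \theta_2(y')\bigr) = LL\bigl(x,\theta_2(x')\bigr)\, LL\bigl(y,\theta_2(y')\bigr).
\end{align*}

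Substituting this in and using the bilinearity of $\wedge$, the sum separates as
\begin{align*}
\Psi(\F(x\star y))
&= \Bigl(\sum_{x'\in A_n} LL(x,\theta_2(x'))\, x'\Bigr) \wedge \Bigl(\sum_{y'\in A_{n'}} LL(y,\theta_2(y'))\, y'\Bigr) \\
&= \Psi(\F(x))\wedge \Psi(\F(y)),
\end{align*}
which is the desired identity.

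The main (small) obstacle is verifying that $\{x'\wedge y'\}$ really is an ONB of $\hom(1,\gamma^{n+n'})$ and that $\theta_2$ distributes across $\wedge$; both are essentially bookkeeping consequences of the tensor structure and Proposition \ref{Prop:Rho Theta Contraction}. Once these are in place, Lemma \ref{Lem:star} does the real work, and the proposition follows by a direct computation.
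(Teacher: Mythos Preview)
Your approach is essentially identical to the paper's: expand $\Psi\F(x\star y)$ via Definition~\ref{Def:SFT}, invoke Lemma~\ref{Lem:star} to factor the bilinear form, and reassemble the separated sums as a wedge. You are in fact more explicit than the paper about tracking the $\theta_2$ and about citing Proposition~\ref{Prop:Rho Theta Contraction} to pass $\theta_2$ through $\wedge=\wedge_0$.

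There is, however, one step that does not go through as stated (and which the paper's terse proof also glosses over). The set $\{x'\wedge y':x'\in A_n,\ y'\in A_{n'}\}$ is orthonormal but \emph{not} complete in $\hom_{\C^m}(1,\gamma^{n+n'})$: the identification $\hom(1,\gamma^n\otimes\gamma^{n'})\cong\hom(1,\gamma^n)\otimes\hom(1,\gamma^{n'})$ is not valid in a general tensor category. Concretely, take $m=2$ and $n=n'=1$: then $\gamma=\bigoplus_{X\in Irr}X\otimes\overline X$, so $\dim\hom_{\C^2}(1,\gamma)=1$ while $\dim\hom_{\C^2}(1,\gamma^2)=|Irr|$. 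Hence $\wedge$ is an isometry with proper image, and your expansion of $\Psi\F(x\star y)$ over the product basis omits every component orthogonal to $\mathrm{image}(\wedge)$. Since Lemma~\ref{Lem:star} only evaluates $LL(x\star y,\,\cdot\,)$ on vectors of the form $x'\wedge y'$, what remains to be shown is that $LL\bigl(x\star y,\theta_2(z')\bigr)=0$ for every $z'\perp\mathrm{image}(\wedge)$; this is the missing ingredient needed to conclude that $\Psi\F(x\star y)$ actually lies in $\mathrm{image}(\wedge)$.
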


\begin{proof}
By Lemma \ref{Lem:star},
\begin{align*}
  &\Psi (\F (x\star y))\\
 =& \sum_{x', y' \in B} LL(\vec{x}\star\vec{y} , \vec{x'}\otimes\vec{y'}) x'\otimes y'\\
 =& \sum_{x', y' \in B} LL(\vec{x}, \vec{x'}) LL(\vec{y}, \vec{y'}) x'\otimes y' \\
 =& \Psi(\F(x)) \wedge \Psi(\F(y)) .
\end{align*}

\end{proof}

\begin{lemma}\label{Lem:LLphi1}
For $x\in\hom(1,\gamma^n)$ and $x'\in\hom(1,\gamma^{n-1})$, $n\geq1$,
  $$LL(\phi_1(x), x')= LL(x, \iota_0(x')).$$
\end{lemma}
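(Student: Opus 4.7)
My plan is to prove the adjoint-type identity between $\phi_1$ and $\iota_0$ under $LL$ by unfolding both sides using the explicit formulas
\[
\phi_1(x)=\delta\,\mu_3\wedge_2 x,\qquad \iota_0(x')=\mu_1\wedge x',
\]
and then showing that the resulting 3D diagrams coincide. Specifically, I would first substitute these identities into both sides, so the LHS becomes $\delta\, LL(\mu_3\wedge_2 x,\,x')$ and the RHS becomes $LL(x,\,\mu_1\wedge x')$. Both are the same type of 3D linking diagram defining $LL$ (Equation \eqref{Equ:bilinear form on SA}); they differ only in where the ``extra structure'' is attached --- either a cap via $\mu_3$ on the first two strands of $x$, or a prepended $\mu_1$ strand on $x'$.

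Next, I would reduce each side to a canonical form. Expanding $\mu_3=\sum_{\alpha}\overline{L(\alpha)}\alpha$ and applying the contraction rule (Lemma \ref{Lem:contraction rule}) on the LHS produces an ONB-weighted sum over an intermediate $Z$-configuration (the one that gets contracted off). On the RHS, the prepended $\mu_1$ introduces a single ``probe'' strand on the dual lattice at the leftmost slot; when $LL$ links it with the first strand of $x$, one can again use the resolution of the identity \eqref{Equ:resolution of identity} together with Frobenius reciprocity to rewrite the linking as the same ONB sum over intermediate $Z$-configurations. Matching term-by-term then gives the equality, with the extra factor $\delta$ on the LHS absorbed by the formula $\phi_1(x)=\delta\mu_3\wedge_2 x$ and the lack of it on the RHS ($\iota_0(x')=\mu_1\wedge x'$), balanced against the different prefactors $\delta^{1-(n-1)}$ versus $\delta^{1-n}$ coming from the two copies of $LL$.

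Alternatively, and perhaps more cleanly, one can argue topologically: in the 3D picture of $LL$, the small $\mu_1$-cap that $\iota_0$ attaches to $x'$ can be isotoped across the Hopf-link structure onto the $x$ side, where by the definition of $\mu_3$ (and its realization as an ONB sum paired with $L$) it becomes precisely the contraction operator $\phi_1$ acting on $x$. The main obstacle will be bookkeeping: tracking the scaling factors $\delta$ and $\sqrt{d(X_{\vec{i},\vec{j}})}\sqrt{d(X'_{\vec{i},\vec{j}})}$ through the manipulations, and making sure the $Z$-configuration boundary constraints (which must match for any $LL$-term to be nonzero) are respected as one shifts boundary labels by one slot between $\gamma^n$ and $\gamma^{n-1}$.
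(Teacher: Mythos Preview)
Your proposal is correct and follows essentially the same route as the paper: expand $\phi_1(x)=\delta\,\mu_3\wedge_2 x$, write $\mu_3$ as the ONB sum $\sum_\alpha \overline{L(\alpha)}\alpha$, and use Lemma~\ref{Lem:contraction rule} together with the resolution of the identity~\eqref{Equ:resolution of identity} to collapse the auxiliary sum, with the $\delta$-bookkeeping matching $\delta^{1-(n-1)}\cdot\delta\cdot\delta^{-2}\cdot d(b)$ against $\delta^{1-n}$ exactly as you anticipate. The only stylistic difference is that the paper works entirely from the LHS and lands directly on the diagram for $LL(x,\iota_0(x'))$ rather than expanding both sides and matching; your ``topological isotopy'' alternative is a valid heuristic but the paper carries out the algebraic version.
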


\begin{proof}
When $n=1$, the statement is obvious.

When $n\geq2$, suppose $x=a_{\vec{j}}(b_{\vec{i}})$ and $x'=a'_{\vec{j}}(b'_{\vec{i}})$.
For $Y$-configurations $b \in B$, we define $A_{b}$ to be an ONB of $\hom_{\C^m} (1, X(b) \otimes X(\theta_1(b_2))\otimes X(\theta_1(b_1)))$, a sub space of $\hom_{\C^m}(1,\gamma^3)$.
 Then by Lemma \ref{Lem:contraction rule} and Equation \eqref{Equ:resolution of identity}, we have
\begin{align}
&LL(\phi_1(x), x')\\
=& \sum_{b \in B_1, \alpha\in A_{b}} \delta^{1-(n-1)} \delta \delta^{-2} d(b) \sqrt{d(b_{\vec{j}})d(b'_{\vec{j}})}\\
&\raisebox{0cm}{
\begin{tikzpicture}
\myGT{3}{1};
%\node at (-1,0,-1) {$\displaystyle L(a_{\vec{j}})=\prod_{1\leq i \leq n,0\leq j\leq m-1}\sqrt{d(X_{i,j})}$};
            \foreach \x in {3,4} {
            \draw[blue](\x,-0.3,0)--(\x,0,0);
%            \node at (\x-.2,-0.2,0) {$X_{\x,\vec{j}}$};
            \node at (\x,-.4,0) {$b_{\x}$};
                \node[blue] at (\x,-0.3,0) {$\bullet$};
                }
            \draw[blue](1,0,0)--++(0,-0,0)--++(.5,-0.2,0)--++(0,-0.2,0);
            \draw[blue](2,0,0)--++(0,-0,0)--++(-.5,-0.2,0);
                \node at (1.3,-.2,0) {$\alpha$};
                \node[blue] at (1.5,-.4,0) {$\bullet$};
                \node at (1.5,-.5,0) {$b$};
                \begin{scope}
                %[shift={(-1,-.4,0)},yscale=-1]
                [shift={(0,-1.2,0)},yscale=-1]
            \draw[blue](1,0,0)--++(0,-0,0)--++(.5,-0.2,0)--++(0,-0.2,0);
            \draw[blue](2,0,0)--++(0,-0,0)--++(-.5,-0.2,0);
                \node at (1.3,-.2,0) {$\alpha^*$};
                \node[blue] at (1.5,-.4,0) {$\bullet$};
                \node at (1.5,-.5,0) {$b^*$};
                \node[blue] at (1,0,0) {$\bullet$};
                \node[blue] at (2,0,0) {$\bullet$};
                \node at (1,.1,0) {$b_1$};
                \node at (2,.1,0) {$b_2$};
                \end{scope}
                \draw[blue] (1,0,0)--(4,0,0);
                \node at (2.5,.05,0) {$a_{\vec{j}}$};
                \begin{scope}[shift={(1.5,0,.5)}]
            \foreach \x in {1,2,3} {
            \draw[red](\x,-0.3,0)--(\x,0,0);
%            \node at (\x-.2,-0.2,0) {$X_{\x,\vec{j}}$};
            \node at (\x,-.4,0) {$b'_{\x}$};
                \node[red] at (\x,-0.3,0) {$\bullet$};
                }
                \draw[red] (1,0,0)--(3,0,0);
                \node at (2.5,.07,0) {$a'_{\vec{j}}$};
                \end{scope}
\end{tikzpicture}
}\\
=& \delta^{1-n} \sqrt{d(b_{\vec{j}})} \sqrt{d(b'_{\vec{j}})}\\
&\raisebox{0cm}{
\begin{tikzpicture}
\myGT{3}{1};
%\node at (-1,0,-1) {$\displaystyle L(a_{\vec{j}})=\prod_{1\leq i \leq n,0\leq j\leq m-1}\sqrt{d(X_{i,j})}$};
            \foreach \x in {1,2,3,4} {
            \draw[blue](\x,-0.3,0)--(\x,0,0);
%            \node at (\x-.2,-0.2,0) {$X_{\x,\vec{j}}$};
            \node at (\x,-.4,0) {$b_{\x}$};
                \node[blue] at (\x,-0.3,0) {$\bullet$};
                }
                \draw[blue] (1,0,0)--(4,0,0);
                \node at (2.5,.05,0) {$a_{\vec{j}}$};
                \begin{scope}[shift={(1.5,0,.5)}]
            \foreach \x in {1,2,3} {
            \draw[red](\x,-0.3,0)--(\x,0,0);
%            \node at (\x-.2,-0.2,0) {$X_{\x,\vec{j}}$};
            \node at (\x,-.4,0) {$b'_{\x}$};
                \node[red] at (\x,-0.3,0) {$\bullet$};
                }
                \draw[red] (1,0,0)--(3,0,0);
                \node at (2.5,.07,0) {$a'_{\vec{j}}$};
                \end{scope}
\end{tikzpicture}} \\
&=  LL(x, \iota_0(x'))
\end{align}
The general case follows from the linearity.
\end{proof}

From the proof of Lemma \ref{Lem:LLphi1}, we see that the contraction $\phi_1$ on the configuration space is contracting the $Z$-configurations $X_{1,\vec{j}}$ and $X_{2,\vec{j}}$.
The diagrammatic representation of the contracted configuration is given by
\begin{align}\label{Equ:contraction}
&\delta^{1-n} \sqrt{d(X_{\vec{i},\vec{j}})}
\raisebox{-1cm}{
\begin{tikzpicture}
\myGT{3}{1};
            \foreach \x in {1,2,3,4} {
            \draw[blue](\x,-0.3,0)--(\x,0,0);
%            \node at (\x-.2,-0.15,0) {$X_{\x,\vec{j}}$};
            \node at (\x,-.4,0) {$b_{\x}$};
                \node[blue] at (\x,-0.3,0) {$\bullet$};
                \draw[green] (\x-.1,-0.15,0)--++(.2,0,0);
                }
 \draw[green] (1-.1,-0.15,0)--++(1.2,0,0);
                \draw[blue] (1,0,0)--(4,0,0);
                \node at (2.5,.05,0) {$a_{\vec{j}}$};
\end{tikzpicture}
}.
\end{align}
%One can apply the Equation \ref{Equ:resolution of identity} to compute the resultant configuration.

\begin{lemma}\label{Lem:LLphi2}
For $x\in\hom(1,\gamma^n)$ and $x'\in\hom(1,\gamma^{n-1})$, $n\geq2$,
  $$LL(\phi_2(x), x')= LL(x, \iota_{1}(x')).$$
\end{lemma}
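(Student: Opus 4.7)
The proof should parallel the argument of Lemma \ref{Lem:LLphi1}, with the crucial new ingredient being the modular killing relation encoded in Proposition \ref{Prop:0 map'}. By bilinearity, it suffices to treat $x=a_{\vec{j}}(b_{\vec{i}})\in\hom(1,\gamma^n)$ and $x'=a'_{\vec{j}}(b'_{\vec{i}})\in\hom(1,\gamma^{n-1})$ explicitly. I would start by expanding the left-hand side $LL(\phi_2(x),x')$ using the diagrammatic formula \eqref{Equ:bilinear form on SA}, together with the fact that $\phi_2$ merges the second and third $\gamma$-strands of $x$ via the Frobenius multiplication $\mu_3$ (whereas $\phi_1$ merges the first two). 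Consequently the resulting contraction loop sits \emph{inside} the 3D diagram and braids nontrivially with the first column $b_1$ of the $Y$-configuration.

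Next, I would apply the resolution of identity \eqref{Equ:resolution of identity} inside the contracted $\mu_3$, producing a sum over $X_{\vec{j}}\in Irr^m$ of $d(X_{\vec{j}})$ times a projector $\alpha^*\alpha$ ranging over $\alpha\in ONB(X_{\vec{j}})$. Because of the braiding noted above, this projector appears threaded around the first column, and the resulting local piece of the diagram matches exactly the left-hand side of Equation \eqref{Equ:L} in Proposition \ref{Prop:0 map'}. Applying that proposition collapses the braided sum to the trivial insertion of $1_{\tilde X}$ paired with its dual between the first two columns, thereby removing the braiding entirely. Tracking the $\delta$-factors and comparing with the formula $\iota_1(x')=\delta\,\mu_3\wedge_1 x'$, the resulting diagram is precisely the 3D representation of $LL(x,\iota_1(x'))$ obtained from \eqref{Equ:bilinear form on SA}.

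The main obstacle is the careful bookkeeping of the braided diagram: verifying that the loop produced by expanding $\phi_2$ wraps around the first column of $Y$-configurations in exactly the arrangement appearing in Proposition \ref{Prop:0 map'}. In particular, one must track how the anti-linear involution $\theta_1$ (which implements the self-duality of $\gamma$ via Definition \ref{Def:thetaX}) shows up when one side of the loop closes off into $\alpha^*$ while the other closes into $\alpha$, and confirm that Proposition \ref{Prop:Rho Theta Contraction} guarantees the surviving factors come out with the correct orientation. This step is precisely where the modularity of $\C$ is essential; by contrast, in Lemma \ref{Lem:LLphi1} the contraction sat on the boundary of the 3D diagram, no braiding was produced, and only the resolution of identity sufficed.
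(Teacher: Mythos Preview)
Your overall strategy is right: the essential new ingredient beyond Lemma~\ref{Lem:LLphi1} is precisely Proposition~\ref{Prop:0 map'}, and the resolution of identity~\eqref{Equ:resolution of identity} together with Lemma~\ref{Lem:contraction rule} are the other tools. But your description of $\phi_2$ is incorrect, and this derails the mechanism you sketch.

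Recall that $\phi_k$ caps the $(k{+}1)$th and $(k{+}2)$th \emph{boundary points}, and each $\gamma$-strand carries two such points. Thus $\phi_1$ caps points $2,3$ and merges the first two $\gamma$-strands (via $\mu_3$), while $\phi_2$ caps points $3,4$ and \emph{removes} the second $\gamma$-strand via the counit: $\phi_2=\rho^{-1}\phi_0\rho$, so $\phi_2(a_{\vec j}(b_{\vec i}))$ vanishes unless $b_2=1_\gamma^*$. What you described---merging the second and third $\gamma$-strands via $\mu_3$---is $\phi_3$, not $\phi_2$. Carrying out your plan literally would therefore compute $LL(\phi_3(x),x')$ rather than $LL(\phi_2(x),x')$.

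This also shifts where the braiding occurs. There is no ``contraction loop'' coming from $\phi_2(x)$ itself; the loop that needs Proposition~\ref{Prop:0 map'} arises on the \emph{$x'$ side}, from expanding $\iota_1(x')=\delta\,\mu_3\wedge_1 x'$. The paper accordingly works in the opposite direction from yours: it starts from $LL(x,\iota_1(x'))$, expands $\mu_3$ as a sum over $b'_-,b'_+\in B$ and $\alpha\in ONB\bigl(\hom(1,X(b'_-)\otimes X(b'_+)\otimes X(\theta_1(b'_1)))\bigr)$, and after applying Lemma~\ref{Lem:contraction rule} and~\eqref{Equ:resolution of identity} obtains a red loop (on the $x'$ side) threading around the blue column $b_2$---not $b_1$. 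Proposition~\ref{Prop:0 map'} then collapses this loop to the projection $\delta_{b_2,\,1_\gamma^*}$, which is exactly the effect of $\phi_2$ on $x$, yielding $LL(\phi_2(x),x')$.

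So your identification of Proposition~\ref{Prop:0 map'} as the crux is correct, but you should start from $LL(x,\iota_1(x'))$, expand $\iota_1$ rather than $\phi_2$, and track the resulting loop around the \emph{second} column $b_2$.
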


\begin{proof}
For $b'_-,b'_+ \in B$, take $A_{b'_-,b'_+}$ to be an ONB of $\hom(1, X(b'_-)\otimes X(b'_+) \otimes X(\theta_1(b'_1)))$. Then by Lemma \ref{Lem:contraction rule}, Equation \eqref{Equ:resolution of identity} and Proposition \ref{Prop:0 map'}, we have
\begin{align}
&LL(x, \iota_{1}(x'))\\
=&\sum_{b'_-,b'_+ \in B, \alpha\in A_{b'_-,b'_+}}  \delta^{1-n} \delta \delta^{-2}d(b'_-) d(b'_+) \sqrt{d(b_{\vec{j}})d(b'_{\vec{j}})}\\
&
\raisebox{0cm}{
\begin{tikzpicture}
\myGT{3}{1};
%\node at (-1,0,-1) {$\displaystyle L(a_{\vec{j}})=\prod_{1\leq i \leq n,0\leq j\leq m-1}\sqrt{d(X_{i,j})}$};
            \foreach \x in {1,2,3,4} {
            \draw[blue](\x,-0.4,0)--(\x,0,0);
%            \node at (\x-.2,-0.2,0) {$X_{\x,\vec{j}}$};
            \node at (\x,-.5,0) {$b_{\x}$};
                \node[blue] at (\x,-0.4,0) {$\bullet$};
                }
                \draw[blue] (1,0,0)--(4,0,0);
                \node at (2.5,.05,0) {$a_{\vec{j}}$};
                \begin{scope}[shift={(.5,0,.5)}]
            \foreach \x in {3,4} {
                \draw[red](\x,-0.4,0)--(\x,0,0);
%               \node at (\x-.2,-0.2,0) {$X_{\x,\vec{j}}$};
%                \node at (\x,-.5,0) {$(b'_{\x})$};
                \node[red] at (\x,-0.4,0) {$\bullet$};
                }
                    \node at (3,-.5,0) {$b'_{2}$};
                    \node at (4,-.5,0) {$b'_{3}$};
                    \draw[red] (2,0,0)--(4,0,0);
                \draw[red](1,-0.4,0)--(1,-.1,0);
%               \node at (\x-.2,-0.2,0) {$X_{\x,\vec{j}}$};
                \node at (1,-.5,0) {$b'_{-}$};
                \node[red] at (1,-0.4,0) {$\bullet$};
                \draw[red](2,-0.4,0)--(2,0,0);
%               \node at (\x-.2,-0.2,0) {$X_{\x,\vec{j}}$};
                \node at (2,-.5,0) {$b'_{+}$};
            \node[red] at (2,-0.4,0) {$\bullet$};
            \draw[red] (1,-.1,0)--(2,-.1,0);
            \node at (3,.1,0) {$a'_{\vec{j}}$};
                \node at (2-.1,0.0,0) {$\vec{\alpha}$};
                \end{scope}
            \begin{scope}[shift={(-2,-.4,0)},yscale=-1];
            \draw[red](1,-0.2,0)--(1,0,0);
%               \node at (\x-.2,-0.2,0) {$X_{\x,\vec{j}}$};
                \node at (1,-.3,0) {$(b'_{-})^{*}$};
                \node[red] at (1,-0.2,0) {$\bullet$};
                \draw[red](2,-0.2,0)--(2,.2,0);
%               \node at (\x-.2,-0.2,0) {$X_{\x,\vec{j}}$};
                \node at (2,-.3,0) {$(b'_{+})^{*}$};
            \node[red] at (2,-0.2,0) {$\bullet$};
            \draw[red] (1,0,0)--(2,0,0);
                \node at (2-.1,0.1,0) {$\vec{\alpha}$};
                \node[red] at (2,.2,0) {$\bullet$};
                \node at (2,.3,0) {$b_1$};
            \end{scope}
\end{tikzpicture}
}\\
%%%%%%%%%%%%%%%%%%%%%%%%%%%%%%%%%%%%%%%%%%%%%%%%%%%%%
=&\sum_{b'_- \in B} \delta^{-n}  d(b'_-)  \sqrt{d(b_{\vec{j}})d(b'_{\vec{j}})}\\
&\raisebox{0cm}{
\begin{tikzpicture}
\myGT{3}{1};
%\node at (-1,0,-1) {$\displaystyle L(a_{\vec{j}})=\prod_{1\leq i \leq n,0\leq j\leq m-1}\sqrt{d(X_{i,j})}$};
            \foreach \x in {1,2,3,4} {
            \draw[blue](\x,-0.4,0)--(\x,-.1,0);
%            \node at (\x-.2,-0.2,0) {$X_{\x,\vec{j}}$};
            \node at (\x,-.5,0) {$b_{\x}$};
                \node[blue] at (\x,-0.4,0) {$\bullet$};
                }
                \draw[blue] (1,-.1,0)--(4,-.1,0);
                \node at (2.5,-.05,0) {$a_{\vec{j}}$};
                \begin{scope}[shift={(.5,0,.5)}]
            \foreach \x in {3,4} {
                \draw[red](\x,-0.4,0)--(\x,-.1,0);
%               \node at (\x-.2,-0.2,0) {$X_{\x,\vec{j}}$};
%                \node at (\x,-.5,0) {$(b'_{\x})$};
                \node[red] at (\x,-0.4,0) {$\bullet$};
                }
                    \node at (3,-.5,0) {$b'_{2}$};
                    \node at (4,-.5,0) {$b'_{3}$};
                    \draw[red] (2,-.1,0)--(4,-.1,0);
                \draw[red](1,-0.4,0)--(1,-.2,0);
%               \node at (\x-.2,-0.2,0) {$X_{\x,\vec{j}}$};
                \node at (1,-.5,0) {$b'_{-}$};
                \node[red] at (1,-0.4,0) {$\bullet$};
                \draw[red](2,-0.4,0)--(2,-.1,0);
%               \node at (\x-.2,-0.2,0) {$X_{\x,\vec{j}}$};
                \node at (2,-.5,0) {$b'_{1}$};
            \node[red] at (2,-0.4,0) {$\bullet$};
            \draw[red] (1,-.2,0)--(1,-.1,0)--(1.75,-.1,0)--(1.75,-1,0)--(1,-1,0)--(1,-.8,0);
            \node[red] at (1,-.8,0) {$\bullet$};
            \node at (1,-.7,0) {$(b'_-)^{*}$};
            \node at (3,-.05,0) {$a'_{\vec{j}}$};
                \end{scope}
\end{tikzpicture}
}\\
%%%%%%%%%%%%%%%%%%%%%%%%%%%%%%%%%%%%%%%%%%%%%%%%%%%%%
\iffalse
=&\sum_{b'_- \in B} \delta^{-n} d(b'_-)  \sqrt{d(b_{\vec{j}})d(b'_{\vec{j}})}\\
&\raisebox{0cm}{
\begin{tikzpicture}
\myGT{3}{1};
%\node at (-1,0,-1) {$\displaystyle L(a_{\vec{j}})=\prod_{1\leq i \leq n,0\leq j\leq m-1}\sqrt{d(X_{i,j})}$};
            \foreach \x in {1,2,3,4} {
            \draw[blue](\x,-0.4,0)--(\x,0,0);
%            \node at (\x-.2,-0.2,0) {$X_{\x,\vec{j}}$};
            \node at (\x,-.5,0) {$b_{\x}$};
                \node[blue] at (\x,-0.4,0) {$\bullet$};
                }
                \draw[blue] (1,0,0)--(4,0,0);
                \node at (2.5,.05,0) {$a_{\vec{j}}$};
                \begin{scope}[shift={(.5,0,.5)}]
            \foreach \x in {3,4} {
                \draw[red](\x,-0.4,0)--(\x,0,0);
%               \node at (\x-.2,-0.2,0) {$X_{\x,\vec{j}}$};
%                \node at (\x,-.5,0) {$(b'_{\x})$};
                \node[red] at (\x,-0.4,0) {$\bullet$};
                }
                    \node at (3,-.5,0) {$b'_{2}$};
                    \node at (4,-.5,0) {$b'_{3}$};
                    \draw[red] (2,0,0)--(4,0,0);
                \draw[red](1,-0.4,0)--(1,-.1,0);
%               \node at (\x-.2,-0.2,0) {$X_{\x,\vec{j}}$};
                \node at (1,-.5,0) {$b'_{-}$};
                \node[red] at (1,-0.4,0) {$\bullet$};
                \draw[red](2,-0.4,0)--(2,0,0);
%               \node at (\x-.2,-0.2,0) {$X_{\x,\vec{j}}$};
                \node at (2,-.5,0) {$b'_{1}$};
            \node[red] at (2,-0.4,0) {$\bullet$};
            \draw[red] (1,-.1,0)--(1.8,-.1,0)--(1.8,0,0);
            \node[red] at (1.8,0,0) {$\bullet$};
            \node at (1.8,0.1,0) {$(b'_-)^{*}$};
            \node at (3,.1,0) {$a'_{\vec{j}}$};
%                \node at (2-.1,0.0,0) {$\vec{\alpha}$};
                \end{scope}
\end{tikzpicture}
}\\
\fi
%%%%%%%%%%%%%%%%%%%%%%%%%%%%%%%%%%%%%%%%%%%%%%%%%%%%%%%%%%%%%%%%%5
&=\delta_{b_2,1_\gamma^*} \delta^2 \delta^{-n} \\
&\raisebox{0cm}{
\begin{tikzpicture}
\myGT{3}{1};
            \foreach \x in {1,3,4} {
            \draw[blue](\x,-0.4,0)--(\x,0,0);
            \node at (\x,-.5,0) {$b_{\x}$};
                \node[blue] at (\x,-0.4,0) {$\bullet$};
                }
                \draw[blue] (1,0,0)--(4,0,0);
                \node at (2.5,.05,0) {$a_{\vec{j}}$};
                \begin{scope}[shift={(.5,0,.5)}]
            \foreach \x in {2,3,4} {
                \draw[red](\x,-0.4,0)--(\x,0,0);
                \node[red] at (\x,-0.4,0) {$\bullet$};
                }
                    \node at (2,-.5,0) {$b'_{1}$};
                    \node at (3,-.5,0) {$b'_{2}$};
                    \node at (4,-.5,0) {$b'_{3}$};
                    \draw[red] (2,0,0)--(4,0,0);
            \node[red] at (2,-0.4,0) {$\bullet$};
            \node at (3,.1,0) {$a'_{\vec{j}}$};
                \end{scope}
\end{tikzpicture}
}\\
&= LL(\phi_2(x), x')
\end{align}
\end{proof}

\begin{lemma}\label{Lem:null}
  Suppose $H_1$ and $H_2$ are Hilbert spaces, and $T$ is an operator from $H_1$ to $H_2$. If $x \perp T(H_1)$ in $H_2$, then $T^*x=0$.
\end{lemma}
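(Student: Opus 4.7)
The plan is to use the defining adjoint relation $\langle T^*x, y\rangle_{H_1} = \langle x, Ty\rangle_{H_2}$ for all $y \in H_1$. By hypothesis, $x$ is orthogonal to every element of the image $T(H_1)$, so $\langle x, Ty\rangle_{H_2} = 0$ for all $y \in H_1$. Substituting into the adjoint identity yields $\langle T^*x, y\rangle_{H_1} = 0$ for every $y \in H_1$.

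The conclusion $T^*x = 0$ then follows from the nondegeneracy of the inner product on $H_1$: a vector orthogonal to the entire space must be the zero vector (one can simply take $y = T^*x$ itself to get $\|T^*x\|^2 = 0$).

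There is no real obstacle here; the statement is a standard fact about Hilbert-space adjoints and needs only one line of calculation. The only subtlety to keep in mind is that the conclusion requires $T^*$ to be well-defined as a map into $H_1$, which is automatic if $T$ is bounded; for unbounded $T$ one should assume $x$ lies in the domain of $T^*$, but in the applications within this paper all the operators are between finite-dimensional Hilbert spaces, so boundedness is automatic and no domain issues arise.
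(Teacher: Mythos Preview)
Your proof is correct and follows essentially the same one-line argument as the paper: use the adjoint identity $\langle T^*x, y\rangle = \langle x, Ty\rangle$, observe that the right-hand side vanishes for all $y\in H_1$ by hypothesis, and conclude $T^*x=0$ by nondegeneracy. Your additional remarks on boundedness and domains are more careful than the paper's version, but the core approach is identical.
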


\begin{proof}
If $x \perp  T(H_1)$ in $H_2$, i.e., $<x,Ty>=0$, $\forall y \in H_2$, then $<T^*x,y>=0$. Thus $T^*x=0$.
\end{proof}

\begin{proposition}\label{Prop:Iso-con}
For  $0\leq k\leq 2n-2$, $x\in\hom(1,\gamma^n)$,
\begin{align*}
  \Psi (\F (\phi_{k+1}(x))&=\phi_k \Psi( \F(x)).
\end{align*}
\end{proposition}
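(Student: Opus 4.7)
The plan is to establish the identity first for the two base cases $k=0$ and $k=1$ by direct computation from the $LL$-lemmas, and then lift to all higher $k$ by an induction that uses the rotational covariance of $\Psi\F$ from Proposition \ref{Prop:Iso-rho-*}.

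For the base case $k=0$, I will test both sides against an arbitrary $z\in \hom(1,\gamma^{n-1})$. Using Proposition \ref{Prop:inc-con-adjoint} (the adjoint relation $\iota_0^{\ast}=\phi_0$) and Definition \ref{Def:SFT} gives
$$\langle\phi_0\Psi(\F(x)),z\rangle=\langle\Psi(\F(x)),\iota_0(z)\rangle=\sum_{x'\in A_n}LL(x,\theta_2(x'))\langle x',\iota_0(z)\rangle,$$
and expanding $\iota_0(z)$ in the ONB $A_n$ together with the anti-linearity of $\theta_2$ collapses this sum to $LL(x,\theta_2(\iota_0(z)))$. Because $\iota_0(y)=\mu_1\wedge y$ with $\theta_2(\mu_1)=\mu_1$ (Proposition \ref{Prop:Rho Theta preserve}) and $\theta_2$ commutes with the wedge (Proposition \ref{Prop:Rho Theta Contraction}), the reflection $\theta_2$ passes through $\iota_0$, giving $LL(x,\iota_0(\theta_2(z)))$. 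Lemma \ref{Lem:LLphi1} then converts this to $LL(\phi_1(x),\theta_2(z))$, which is exactly $\langle\Psi(\F(\phi_1(x))),z\rangle$ when $z$ is a basis element; (anti-)linearity in $z$ extends the equality to general $z$. The case $k=1$ is structurally identical, replacing Lemma \ref{Lem:LLphi1}, $\iota_0$, and $\mu_1$ by Lemma \ref{Lem:LLphi2}, $\iota_1$, and $\mu_3$ (both $\mu_1,\mu_3$ are fixed by $\theta_2$ via Proposition \ref{Prop:Rho Theta preserve}).

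For the inductive step I will use the identity $\phi_{k+2}=\rho^{-1}\phi_k\rho$ on $\SA_{n,+}$, obtained by iterating \eqref{Equ:Fourier phi} twice so the intermediate $-$-side $\phi_{k+1}$ cancels out (equivalently, rotating a capping tangle by one full box shifts the cap position by two). Assuming $\Psi\F\phi_{k+1}=\phi_k\Psi\F$ already holds, Proposition \ref{Prop:Iso-rho-*} ($\Psi\F\rho=\rho\Psi\F$) yields
$$\Psi\F\phi_{k+3}=\Psi\F\rho^{-1}\phi_{k+1}\rho=\rho^{-1}\Psi\F\phi_{k+1}\rho=\rho^{-1}\phi_k\rho\,\Psi\F=\phi_{k+2}\Psi\F,$$
advancing the index by two. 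Starting from the two verified base cases $k=0,1$, this covers every $0\leq k\leq 2n-2$.

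I expect the main obstacle to be the bookkeeping around the anti-linear involution $\theta_2$: one has to verify carefully that it genuinely commutes with each elementary tangle on the $+$-side of the planar algebra, which fortunately is supplied cleanly by Propositions \ref{Prop:Rho Theta preserve} and \ref{Prop:Rho Theta Contraction}. A secondary subtlety is keeping the shadings straight when iterating \eqref{Equ:Fourier phi}, so that the shift formula $\phi_{k+2}=\rho^{-1}\phi_k\rho$ is verified within the single shading $\SA_{n,+}$ on which $\Psi\F$ acts.
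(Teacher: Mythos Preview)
Your proof is correct and follows the same two-step strategy as the paper: establish $k=0,1$ via Lemmas~\ref{Lem:LLphi1} and~\ref{Lem:LLphi2}, then lift to all $k$ using the rotational covariance of Proposition~\ref{Prop:Iso-rho-*}. The only notable difference is in the base-case bookkeeping: the paper computes $\Psi\F(\phi_{k+1}(x))$ directly as a sum over an ONB of $\hom(1,\gamma^{n-1})$, rewrites it as a sum over $\iota_k(B_{n-1})$, and then invokes Lemma~\ref{Lem:null} to extend that sum harmlessly to a full ONB of $\hom(1,\gamma^n)$; you instead pair both sides against an arbitrary $z$ and use the adjoint relation $\phi_k=\iota_k^{\ast}$, which achieves the same end without needing Lemma~\ref{Lem:null}. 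Your explicit identity $\phi_{k+2}=\rho^{-1}\phi_k\rho$ is exactly what the paper's one-line ``the general case follows from Proposition~\ref{Prop:Iso-rho-*}'' is alluding to.
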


\begin{proof}
For $k=0,1$,
\begin{align*}
&\Psi (\F (\phi_{k+1} (x))\\
=& \sum_{x'\in B_{n-1}} LL(\phi_{k+1}(x), \theta_2(x'))x'\\
=& \sum_{x'\in B_{n-1}} LL(x, \iota_k \theta_2(x')))x' && \text{by Lemmas \ref{Lem:LLphi1}, \ref{Lem:LLphi2}}\\
=& \sum_{x'\in B_{n-1}} LL(x, \theta_2\iota_k(x'))x' && \text{by Proposition \ref{Prop:Rho Theta commute}}\\
=&\sum_{x''\in \iota_k(B_{n-1})} LL(x, \theta_2 (x'')) \phi_k(x'') \\
=& \sum_{x''\in B_n} LL(x, \theta_2 (x'')) \phi_k(x'') && \text{by Proposition \ref{Prop:inc-con-adjoint} and Lemma \ref{Lem:null}}\\
=& \phi_k \Psi( \F(x)).
\end{align*}
The general case follows from Proposition \ref{Prop:Iso-rho-*}.
\end{proof}

\begin{proposition}\label{Prop:Iso-isometry}
The map $\Psi:\SA_{n,-}\to \SA_{n,+}$ is an isometry.
\end{proposition}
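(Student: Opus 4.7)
The plan is to reduce this statement to the self-duality Theorem \ref{Thm:MN Self-duality}, which at this point in the text is assumed for the relevant $m,n$. Since the string Fourier transform $\F:\SA_{n,+}\to\SA_{n,-}$ is an isometry (it is a one-string rotation), it suffices to show that the composite $\Psi\F:\SA_{n,+}\to\SA_{n,+}$ is an isometry, i.e.\ that $\langle\Psi\F(x_1),\Psi\F(x_2)\rangle=\langle x_1,x_2\rangle$ for all $x_1,x_2\in\SA_{n,+}$.

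First I would expand the inner product using Definition \ref{Def:SFT} and the orthonormality of the basis $B_n$ of $\SA_{n,+}$:
\begin{align*}
\langle\Psi\F(x_1),\Psi\F(x_2)\rangle
&=\sum_{x',x''\in B_n}\overline{LL(x_1,\theta_2(x'))}\,LL(x_2,\theta_2(x''))\,\langle x',x''\rangle\\
&=\sum_{x'\in B_n}\overline{LL(x_1,\theta_2(x'))}\,LL(x_2,\theta_2(x')).
\end{align*}

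Next, I would perform the change of variable $y=\theta_2(x')$. By Proposition \ref{Prop:Rho Theta commute} we have $\theta_2^2=1$, and $\theta_2$ is an anti-linear isometry on the configuration space, so $\theta_2(B_n)$ is again an ONB of $\SA_{n,+}$ (identified with $Conf(Lat')$ via the natural identification of the lattice with its dual). The sum becomes
\begin{align*}
\sum_{y\in\theta_2(B_n)}\overline{LL(x_1,y)}\,LL(x_2,y).
\end{align*}

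Finally, I would invoke Theorem \ref{Thm:MN Self-duality}, or equivalently the earlier statement that $\D_+$ is an anti-linear isometry: unfolding the definition of $\D_+$ gives
\begin{align*}
\langle x_1,x_2\rangle=\langle \D_+(x_2),\D_+(x_1)\rangle=\sum_{y\in B'}\overline{LL(x_1,y)}\,LL(x_2,y),
\end{align*}
and by Proposition \ref{Prop:basis-independence} this sum is independent of the choice of ONB $B'$ of $Conf(Lat')$, so in particular it equals the previous display with $B'=\theta_2(B_n)$. This yields $\langle\Psi\F(x_1),\Psi\F(x_2)\rangle=\langle x_1,x_2\rangle$, and the isometry of $\Psi$ follows.

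The only nontrivial input is Theorem \ref{Thm:MN Self-duality}; the rest is bookkeeping with orthonormal bases and with the fact that $\theta_2$ is an anti-linear isometry commuting appropriately with the bilinear form $LL$. The potential obstacle is purely notational: one must be careful to identify $\SA_{n,-}$ with the configuration space of the dual lattice in the way compatible with $\F$, and to verify that $\theta_2$ sends an ONB on one side to an ONB on the other. Both of these are built into the setup in \S\ref{Sec:Action on lattices}.
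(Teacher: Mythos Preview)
Your argument is circular in the paper's logical structure. At the point where this proposition is proved, Theorem \ref{Thm:MN Self-duality} is \emph{not} yet available for general $n$; it has only been established for $n=2$ (this is the bi-induction described at the end of \S\ref{Sec:Duality}). In fact, immediately after Theorem \ref{Thm:selfdual} the paper writes $\D_+=\theta_2\Psi\F$ and says ``we obtain Theorem \ref{Thm:MN Self-duality} from Proposition \ref{Prop:Iso-isometry}.'' So the direction of implication is the opposite of what you assume: this proposition is the \emph{input} to Theorem \ref{Thm:MN Self-duality} for general $n$, not a consequence of it. Your computation is correct as a verification that the two statements are equivalent, but it does not prove either one.

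The paper's actual proof avoids this circularity by exploiting the commutation relations already established at this stage, namely Propositions \ref{Prop:Iso-tensor} and \ref{Prop:Iso-con} (whose proofs only needed the $n=2$ case of Theorem \ref{Thm:MN Self-duality}, via Proposition \ref{Prop:0 map'}). Concretely, one writes the inner product $\langle \Psi\F(x),\Psi\F(y)\rangle$ as a string of contractions $\phi_0\phi_1\cdots\phi_{2n-1}$ applied to $\Psi\F(x)\wedge\Psi\F(y)$, then uses $\Psi\F(x)\wedge\Psi\F(y)=\Psi\F(x\star y)$ and $\phi_k\Psi\F=\Psi\F\phi_{k+1}$ to push $\Psi\F$ through all but the last contraction. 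What remains is $\Psi\F$ acting on a $1$-box, where $\Psi$ is the identity by definition, and the expression collapses to $\langle x,y\rangle$. This is a genuine reduction to the trivial cases $n=0,1$, and it is what you are missing.
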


\begin{proof}
It is true for $n=0,1$ by definition.
When $n\geq2$,
  for $x$, $y$ in $\SA_{n,+}$,
\begin{align*}
&\langle \Psi\F(x), \Psi\F(y) \rangle \\
=& \delta^{n/2} \phi_0 \phi_1 \cdots \phi_{2n-1}  (\Psi\F(x)\wedge \Psi\F(y))\\
=& \delta^{n/2} \phi_0 \Psi\F (\phi_2 \cdots \phi_{2n}  (x\star y)) && \text{by Propositions \ref{Prop:Iso-tensor}, \ref{Prop:Iso-con}} \\
=& \delta^{n/2} \phi_0\phi_2 \cdots \phi_{2n}  (x\star y) \\
=& \langle x,y\rangle\\
=& \langle \F(x),\F(y)\rangle
\end{align*}.
\end{proof}

\begin{proposition}\label{Prop:Iso-inc}
For  $0\leq k\leq 2n-2$, $x\in\hom(1,\gamma^n)$,
\begin{align*}
\Psi (\F (\iota_{k}(x))&=\iota_{k+1} \Psi(\F(x)).
\end{align*}
\end{proposition}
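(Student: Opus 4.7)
The plan is to deduce the claim from Proposition~\ref{Prop:Iso-con} by an adjoint argument, using that $\Psi$ and $\F$ are unitary. By Proposition~\ref{Prop:Iso-isometry}, $\Psi:\SA_{n,-}\to\SA_{n,+}$ is an isometry between finite-dimensional Hilbert spaces of equal dimension, hence unitary with $\Psi^{*}=\Psi^{-1}$; likewise for $\F$. Proposition~\ref{Prop:inc-con-adjoint}, extended to all $k$ via $\iota_k=\F^{-k}\iota_0\F^k$ and the analogous formula for $\phi_k$, yields the adjoint relation $\iota_k^{*}=\phi_k$ on each shading. Below I write $\iota_k^{\pm},\phi_k^{\pm}$ to distinguish the actions on $\SA_{n,\pm}$ when necessary.

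First I would absorb the inner $\F$ of Proposition~\ref{Prop:Iso-con} using the Fourier relation \eqref{Equ:Fourier phi}, which rewrites as $\F\phi_{k+1}^{+}=\phi_k^{-}\F$. Substituting into $\Psi\F\phi_{k+1}^{+}=\phi_k^{+}\Psi\F$ gives $\Psi\phi_k^{-}\F=\phi_k^{+}\Psi\F$, and cancelling the bijection $\F$ on the right gives the intertwining statement
\[
\Psi\,\phi_k^{-}=\phi_k^{+}\,\Psi
\]
as operators $\SA_{n,-}\to\SA_{n-1,+}$. Taking adjoints of this identity, and using $\phi_k^{*}=\iota_k$ on each shading together with $\Psi^{*}=\Psi^{-1}$, one obtains $\iota_k^{-}\Psi^{-1}=\Psi^{-1}\iota_k^{+}$, which rearranges to the $\iota$-intertwiner
\[
\Psi\,\iota_k^{-}=\iota_k^{+}\,\Psi
\]
as operators $\SA_{n-1,-}\to\SA_{n,+}$.

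To finish, I would reintroduce $\F$ via \eqref{Equ:Fourier iota}, $\F\iota_k^{+}=\iota_{k+1}^{-}\F$: for $x\in\SA_{n,+}$,
\[
\Psi\F\iota_k(x)=\Psi\,\iota_{k+1}^{-}\F(x)=\iota_{k+1}^{+}\,\Psi\F(x)=\iota_{k+1}\,\Psi\F(x),
\]
the second equality applying the $\iota$-intertwiner above at index $k+1$, which is the claim.

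The main subtlety to monitor is the shading bookkeeping: \eqref{Equ:Fourier iota} raises the $\iota$-index by one when $\F$ is pushed to the left, while \eqref{Equ:Fourier phi} lowers the $\phi$-index by one. This opposite shift explains the apparent asymmetry between the indices in Propositions~\ref{Prop:Iso-con} and \ref{Prop:Iso-inc}; once both are recast as same-index intertwiners $\Psi\,\phi_k^{-}=\phi_k^{+}\,\Psi$ and $\Psi\,\iota_k^{-}=\iota_k^{+}\,\Psi$ across the shading, the adjoint swap between $\phi$ and $\iota$ becomes transparent.
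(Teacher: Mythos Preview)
Your argument is correct and follows the paper's adjoint strategy, deducing the $\iota$-intertwining from Proposition~\ref{Prop:Iso-con} via Propositions~\ref{Prop:inc-con-adjoint} and~\ref{Prop:Iso-isometry}. The paper carries out the same idea by computing $\langle\Psi\F\iota_k(x),y\rangle$ directly with inner products, whereas you first strip off $\F$ using \eqref{Equ:Fourier phi} and \eqref{Equ:Fourier iota} to reduce to the same-index intertwiner $\Psi\,\iota_k^-=\iota_k^+\,\Psi$ before taking adjoints --- a slightly more explicit handling of the shading and index bookkeeping.
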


\begin{proof}
By Propositions \ref{Prop:Iso-isometry}, \ref{Prop:inc-con-adjoint}, and \ref{Prop:Iso-con}, we have
\begin{align*}
&\langle \Psi\F \iota_k (x), y) \rangle.\\
=&\langle \iota_k (x), \Psi\F(y) \rangle\\
=&\langle x, \phi_k \Psi\F(y) \rangle\\
=&\langle x,\Psi \F (\phi_{k+1}(y)) \rangle\\
=&\langle \iota_{k+1} \Psi \F (x), y \rangle
\end{align*}
Therefore $\Psi (\F (\iota_{k}(x))=\iota_{k+1} \Psi(\F(x))$.

\end{proof}

\begin{theorem}\label{Thm:selfdual}
The map  $\Psi$ is a planar algebraic *-isomorphism from $\SA_{n,-}$ to $\SA_{n,+}$. Therefore, the $m$-interval Jones-Wassermann subfactor is self-dual for any $m\geq 1$.
\end{theorem}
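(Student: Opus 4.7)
The plan is to assemble the structural results already established for the composition $\Psi\circ\F$. Since the one-string rotation $\F$ is an isometric bijection $\SA_{n,+}\to\SA_{n,-}$, every element of $\SA_{n,-}$ can be uniquely written as $\F(x)$ for some $x\in\SA_{n,+}$. Thus to verify $\Psi$ is a planar algebraic $*$-isomorphism it suffices to check that $\Psi$ intertwines each of the $6+1$ elementary generators of the planar algebra action (rotation $\rho$, wedge $\wedge$, inclusions $\iota_0,\iota_1$, contractions $\phi_0,\phi_1$, involution $\theta_1$) between the two shadings, and that $\Psi$ is an isometric bijection.

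For each generator the intertwining is a two-step calculation obtained by combining the transport identities \eqref{Equ:Fourier rho}--\eqref{Equ:Fourier *} with the corresponding commutation result already in place. As a template one writes
\begin{align*}
\Psi(\phi_k \F(x)) \;=\; \Psi(\F \phi_{k+1}(x)) \;=\; \phi_k\,\Psi\F(x),
\end{align*}
where the first equality is \eqref{Equ:Fourier phi} and the second is Proposition \ref{Prop:Iso-con}. Strictly analogous two-step identities handle $\rho$, $\wedge$, $\iota_k$, and $\theta_1$ using Propositions \ref{Prop:Iso-rho-*}, \ref{Prop:Iso-tensor}, and \ref{Prop:Iso-inc} respectively. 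Proposition \ref{Prop:Iso-isometry} provides that $\Psi$ is an isometry, and since $\SA_{n,+}$ and $\SA_{n,-}$ are finite-dimensional of equal dimension (they are identified via $\F$), an isometry between them is automatically surjective, so $\Psi$ is a bijection.

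Once $\Psi$ is known to be a planar algebraic $*$-isomorphism from $\SA_{n,-}$ onto $\SA_{n,+}$, the subfactor planar algebra $\SA$ is unshaded, which is equivalent to the $m$-interval Jones--Wassermann subfactor being self-dual. There is no substantive obstacle remaining at this stage: the genuine work was done earlier, where Proposition \ref{Prop:Iso-con} invoked the modularity of $\C$ in an essential way (through Lemma \ref{Lem:LLphi2} and Proposition \ref{Prop:0 map'}), and where Lemma \ref{Lem:star} reduced the tensor compatibility to an application of Theorem \ref{Thm:MN Self-duality}. The present theorem is a clean assembly of those earlier pieces.
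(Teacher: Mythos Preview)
Your proof is correct and follows essentially the same approach as the paper: write elements of $\SA_{n,-}$ as $\F(x)$, then combine the transport identities \eqref{Equ:Fourier rho}--\eqref{Equ:Fourier *} with Propositions \ref{Prop:Iso-rho-*}, \ref{Prop:Iso-tensor}, \ref{Prop:Iso-con}, \ref{Prop:Iso-inc} to show $\Psi$ intertwines the $6+1$ elementary actions. Your explicit remark that bijectivity follows from Proposition \ref{Prop:Iso-isometry} plus equal finite dimension is a welcome clarification the paper leaves implicit; one small inaccuracy in your closing commentary is that Lemma \ref{Lem:star} itself does not invoke Theorem \ref{Thm:MN Self-duality} (it uses only Lemma \ref{Lem:contraction rule}, Equation \eqref{Equ:resolution of identity}, and Proposition \ref{Prop:dim gamma}), though this does not affect your proof.
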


\begin{proof}
We write an elements in $\SA_{\cdot,-}$ as  $x'=\F(x)$, $y'=\F(y)$, for $x, y\in\SA_{\cdot,+}$.
 %By Equations \eqref{Equ:Fourier rho}, \eqref{Equ:Fourier tensor}, \eqref{Equ:Fourier iota}, \eqref{Equ:Fourier phi}, \eqref{Equ:Fourier *}, and Propositions \ref{Prop:Iso-rho}, \ref{Prop:Iso-tensor}, \ref{Prop:Iso-inc}, \ref{Prop:Iso-con}, \ref{Prop:Iso-adjoint}, we have that

By Equation \eqref{Equ:Fourier rho} and Proposition \ref{Prop:Iso-rho-*},  $\Psi (\rho (x'))=\Psi (\F (\rho(x))=\rho \Psi( x')$.

By Equation \eqref{Equ:Fourier tensor} and Proposition \ref{Prop:Iso-tensor}, $\Psi( x'\wedge y')=\Psi(F(x\star y))=\Psi(x')\wedge \Psi(y')$.

By Equation \eqref{Equ:Fourier phi} and Proposition \ref{Prop:Iso-con},  $\Psi (\phi_{k} (x'))=\Psi (\F (\phi_{k+1}(x))=\phi_k \Psi( x')$.

By Equation \eqref{Equ:Fourier iota} and Proposition \ref{Prop:Iso-inc},  $\Psi (\iota_{k} (x') )=\Psi (\F (\iota_{k+1}(x))=\iota_k \Psi( x')$.

By Equation \eqref{Equ:Fourier *} and Proposition \ref{Prop:Iso-rho-*},  $\Psi(\theta_1 (x'))=\Psi (\F \rho^{-1} \theta_1(x))=\theta_1(\Psi( x'))$.

That means $\Psi$ commutes with the 6+1 elementary actions of planar algebras. So $\Psi$ is a planar algebraic *-isomorphism.
\end{proof}

\begin{remark}
The modularity is essential in the proof of the self-duality of Jones-Wassermann subfactors for the unitary MTC $\C$, so we call this property the modular self-duality of the MTC.
\end{remark}

\begin{remark}
Recall that $\rho_2$ is a planar algebraic *-isomorphism of $\SA_{\cdot,+}$ with periodicity $m$, then for each $k\in\Z_m$, $\Psi\rho_2^k$ is a planar algebraic *-isomorphism from $\SA_{\cdot,-}$ to $\SA_{\cdot,+}$. Therefore there are $k$ different ways to lift the shading of $\SA_{n,\pm}$. Each choice defines an unshaded subfactor planar algebra.
\end{remark}

\begin{remark}\label{Rem:orbifold}
From orbifold theory  it is easy to see that the Jones-Wassermann
subfactors for $n$ disjoint intervals are isomorphic to its dual as
subfactors. Here is a proof using orbifold theory: the dual of
$\pi_{1,\{0,1,...,n-1\}}$ is $\pi_{1,\{n-1,n-2,...,0\}},$ but
$\{n-1,n-2,...,0\}$  is conjugate to $\{0,1,...,n-1\}$ in $S_n$ via
$g(i)=n-i-1, i=0,1,...,n-1,$ hence $\pi_{1,\{n-1,n-2,...,0\}}\simeq
g \pi_{1,\{0,1,...,n-1\}} g^{-1}.$ We refer the readers to \cite{KacLonXu05} for details.
\end{remark}

If we take $\C$ to be the unitary modular tensor category, such that its fusion ring is the cyclic group $Z_d$ and its $S$ matrix is the discrete Fourier transform of $\Z_d$,
then two-box space of the two-interval Jones-Wassermann subfactor is isomorphic to $L^2(Z_d)$.
It is known that the usual multiplication and coproduct on the 2-box space in subfactor theory coincide with the multiplication and convolution on $L^2(Z_d)$.
In addition, we have shown that the SFT is the $S$ matrix which becomes the usual discrete Fourier transform. The modular self-duality reduces to the self-duality of $\Z_d$ on $L^2(\Z_d)$. Therefore the modular self-duality generalize and categorify the self-duality of finite abelian groups.

\subsection{Actions of planar tangles on the configuration space}
Motivated by the Jones-Wassermann subfactor, we obtain actions of planar tangles on the configuration spaces $\{Conf_{n,m}\}_{n,m\in\mathbb{N}}$ in both $X$- and $Y$-directions.
Moreover, these actions coincide with the geometric action on the lattices: The contraction tangle $\phi_1$ corresponds contractions of lattices to as shown in Equation \eqref{Equ:contraction}. The correspondence for the other 6+1 elementary tangles are more straightforward. Thus the actions of planar tangles in two different directions commute.
We call the (Hilbert) space $\{Conf_{n,m}\}_{n,m\in\mathbb{N}}$ equipped with such commutative actions of bidirectional planar tangles a \emph{bi-planar algebra} which we will study in the future.

Note that
\be
\D_+(x)=\sum_{x'\in B} \overline{LL(x, x')}x'=\theta_2\Psi(\F(x)).
\ee
Since $\theta_2$ is anti-isometry, we obtain Theorem \ref{Thm:MN Self-duality} from Proposition \ref{Prop:Iso-isometry}.

Moreover, $\theta_2$ commute with the action of planar tangles,
we have the following result corresponding to Propositions \ref{Prop:Iso-rho-*}, \ref{Prop:Iso-tensor}, \ref{Prop:Iso-con}, \ref{Prop:Iso-inc}:

\begin{proposition}\label{Prop:Duality map 6+1}
For $x\in Conf(\C)_{n,m}$, $y\in Conf(\C)_{\ell,m}$,
\begin{align*}
\D_+\rho(x)&=\rho \D_+(x),\\
\D_+\rho^{-1} \theta_1(x)&=\theta_1 \D_+(x),\\
\D_+ (x\star y)&=\D_+(x) \wedge \D_+(y),\\
\D_+ \phi_{k+1}(x)&=\phi_k \D_+(x),\\
\D_+\iota_{k}(x)&=\iota_{k+1} \D_+(x).
\end{align*}
\end{proposition}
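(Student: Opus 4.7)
The plan is to factor the duality map as $\D_+ = \theta_2 \circ \Psi \circ \F$ (which is immediate from the definition of $\D_+$, Definition~\ref{Def:SFT}, and the identity $\overline{LL(x,x')} = LL(x,\theta_2(x'))$ that underlies the string Fourier transform). Once this factorization is in hand, each of the five identities in Proposition~\ref{Prop:Duality map 6+1} reduces to a short three-step computation: push the elementary action through $\F$ (picking up the index shift), push it through $\Psi$ (which is shading-reversing but index-preserving), and push it through $\theta_2$ (which preserves everything).

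More concretely, for each elementary operation $T$ I would invoke the following inputs in order. For $T = \rho, \wedge, \phi, \iota, \theta_1$, Equations~\eqref{Equ:Fourier rho}--\eqref{Equ:Fourier *} describe exactly how $T$ commutes with $\F$ (e.g.\ $\F \circ \phi_{k+1} = \phi_k \circ \F$ and $\F \circ \iota_k = \iota_{k+1} \circ \F$, while $\F$ converts $\star$ into $\wedge$ and conjugates $\theta_1$ by $\rho^{-1}$). Theorem~\ref{Thm:selfdual} then provides that $\Psi$ is a planar algebraic $*$-isomorphism, so it commutes with $\rho$, $\wedge$, $\phi_k$, $\iota_k$, and $\theta_1$ on the shaded side. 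Finally, Theorem~\ref{Thm:Iso-rho-theta} says $\theta_2$ is an (anti-linear) planar algebraic $*$-isomorphism on $\SA_{\cdot,+}$, hence commutes with each elementary tangle action. Chaining these three commutations gives the desired identity; for instance
\[
\D_+\phi_{k+1}(x) = \theta_2\Psi\F\phi_{k+1}(x) = \theta_2\Psi\phi_k\F(x) = \theta_2\phi_k\Psi\F(x) = \phi_k\theta_2\Psi\F(x) = \phi_k\D_+(x),
\]
and the remaining four lines are analogous, with the $\rho^{-1}\theta_1$ case using \eqref{Equ:Fourier *} to turn the contravariant shift on $\F$ into the expected commutation with $\theta_1$ on the target, and the $\star$ case using \eqref{Equ:Fourier tensor} to convert $\star$ on the source into $\wedge$ on the target before Proposition~\ref{Prop:Iso-tensor} applies.

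There is essentially no main obstacle here: all of the substantive work was done in establishing Theorems~\ref{Thm:Iso-rho-theta} and~\ref{Thm:selfdual} and the Fourier transform identities \eqref{Equ:Fourier rho}--\eqref{Equ:Fourier *}. The only small point requiring care is the bookkeeping of index shifts: $\F$ sends the $k$-indexed action on one side to the $(k\pm 1)$-indexed action on the other, while $\Psi$ and $\theta_2$ preserve indices. The index-shift pattern in Proposition~\ref{Prop:Duality map 6+1} (namely $\phi_{k+1} \mapsto \phi_k$ and $\iota_k \mapsto \iota_{k+1}$) is exactly the shift introduced by $\F$, so the composition produces the correct final indices without additional correction. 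One could alternatively phrase the entire proposition as the single statement that $\D_+$ is an anti-linear planar algebraic $*$-isomorphism from $\SA_{\cdot,+}$ to $\SA_{\cdot,-}$ (the shading flip accounting for the index shift), but carrying out the proof identity-by-identity as above is the most transparent route.
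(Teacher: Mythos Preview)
Your approach is correct and essentially the same as the paper's. The paper records the factorization $\D_+=\theta_2\Psi\F$ immediately before the proposition and then simply states that, since $\theta_2$ commutes with the planar tangle actions, the five identities correspond directly to Propositions~\ref{Prop:Iso-rho-*}, \ref{Prop:Iso-tensor}, \ref{Prop:Iso-con}, \ref{Prop:Iso-inc}; you instead cite Theorem~\ref{Thm:selfdual} (which packages those propositions) together with the Fourier identities \eqref{Equ:Fourier rho}--\eqref{Equ:Fourier *}, which amounts to the same thing. One small imprecision: the pointwise identity $\overline{LL(x,x')}=LL(x,\theta_2(x'))$ you quote is not quite what Proposition~\ref{Prop:LLrhotheta} gives, but you do not actually need it --- the factorization $\D_+=\theta_2\Psi\F$ follows directly by reindexing the sum via the anti-linear isometry $\theta_2$, exactly as the paper does.
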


%\bibliography{bibliography}
%\bibliographystyle{amsalpha}

\end{document}